\documentclass{article}
\usepackage{graphicx}
\usepackage{curves}
\usepackage{amsmath}
\usepackage{amssymb}
\usepackage{amsfonts}
\usepackage{amsthm}
\usepackage{longtable}
\usepackage{multirow}
\usepackage{color}

\newtheorem{lemma}{Lemma}[section]
\newtheorem{theorem}[lemma]{Theorem}
\newtheorem{prop}[lemma]{Proposition}

\newtheorem{coro}[lemma]{Corollary}

\newtheorem{remark}[lemma]{Remark}
\newtheorem*{teoA}{Theorem A}
\newtheorem*{teoB}{Theorem B}

\newcommand{\po}{\mathcal{P}}
\newcommand{\qo}{\mathcal{Q}}
\newcommand{\ko}{\mathcal{K}}
\newcommand{\G}{\Gamma}
\newcommand{\rank}{\mathrm{rank}}
\newcommand{\Aut}{\G}
\newcommand{\fl}{\mathcal{F}}
\newcommand{\Pyr}{\mathrm{Pyr}}
\newcommand{\Pri}{\mathrm{Pri}}
\newcommand{\W}{\mathcal{W}}
\newcommand{\Mon}{\mathcal{M}}

\topmargin-0.5cm
\footskip1cm
\oddsidemargin0.5cm
\evensidemargin0cm
\textwidth16cm
\textheight21cm
\vfuzz1pc
\hfuzz1pc

\parskip 2pt

\numberwithin{equation}{section}

\begin{document}

\title{Products of abstract polytopes}

\author{Ian Gleason\\ {\small Univeristy of California
, Berkeley}\\{\small ianandreigf@gmail.com}
\\Isabel Hubard\\{\small Instituto de Matem\'aticas,} \\ {\small Universidad Nacional Autonoma de M\'exico}
\\ {\small isahubard@im.unam.mx}}
\date{}

\maketitle
\begin{abstract}
 Given two convex polytopes, the join, the cartesian product and the direct sum of them are well understood. In this paper we extend these three kinds of products to abstract polytopes and introduce a new product, called the topological product, which also arises in a natural way.
 We show that these products have unique prime factorization theorems. 
 We use this to compute the automorphism group of a product in terms of the automorphism groups of the factors and show that (non trivial) products are almost never regular or two-orbit polytopes.
We finish the paper by studying the monodromy group of a product, show that such a group is always an extension of a symmetric group, and give some examples in which this extension splits.
\end{abstract}

\section{Introduction}

In school we all dealt, in one way or another, with solids such as prisms and pyramids, but maybe also with bipyramids. The aim of this paper is to generalize these solids as different products of abstract polytopes, and study their symmetry and combinatorial properties.

Prisms, pyramids and bipyramids over polygons (see Figure~\ref{pentagons}) can be seen as a product of a polygon by either a segment or a point. However, these are three different kinds of products. While prisms are the cartesian product of a segment with a polygon, pyramids are the join product of a point with a polygon and bipyramids are the direct product of a segment with a polygon.
In the theory of convex polytopes the generalization of these three notions are the cartesian product, the join product and the direct sum, respectively (\cite{convex}).
Given two convex polytopes $\po\subset\mathbb{R}^n$ and $\qo\subset\mathbb{R}^m$, their products are defined as follows.
\begin{figure}[htbp]
\begin{center}
\includegraphics[width=10cm]{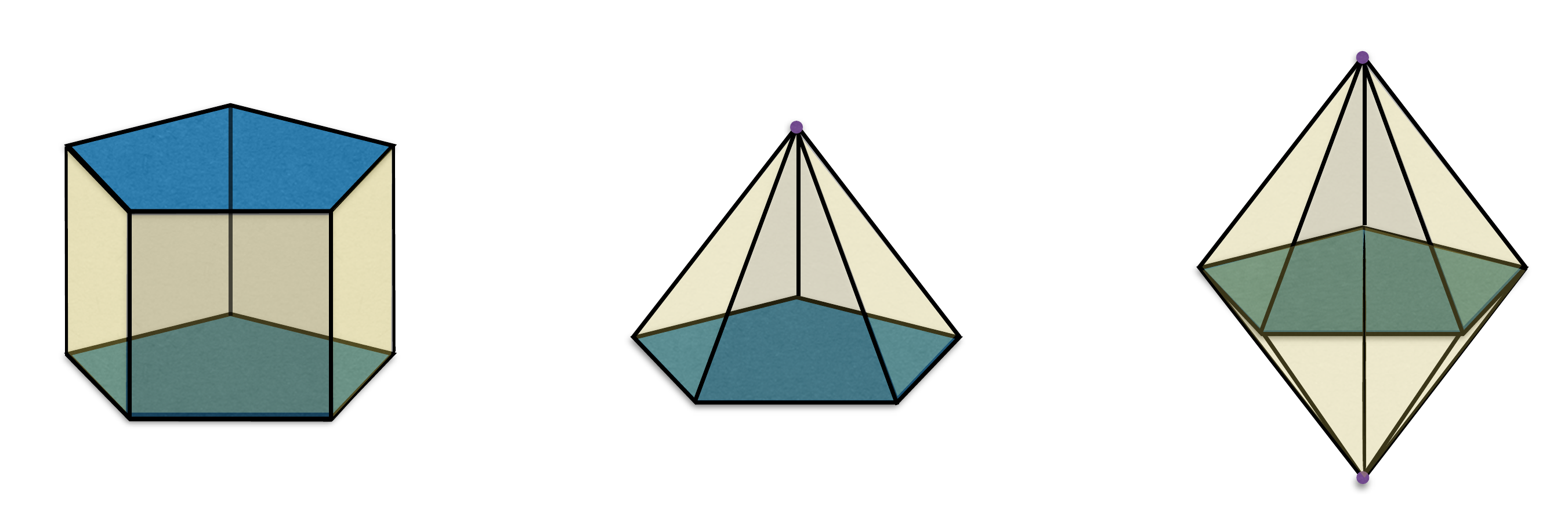} 
\caption{A prism, pyramid and bipyramid over a pentagon.}
\label{pentagons}
\end{center}
\end{figure}

The join of $\po$ and $\qo$  is obtained by embedding $\po$ and $\qo$ in disjoint affine subspaces of $\mathbb{R}^{n+m+1}$ and taking the convex hull of their vertices. For example, for each $d\geq 1$, a $d$-simplex can be seen as the join of a point and a $(d-1)$-simplex (Figure~\ref{JoinPowersofaPoint}).
\begin{figure}[htbp]
\begin{center}
\includegraphics[width=10cm]{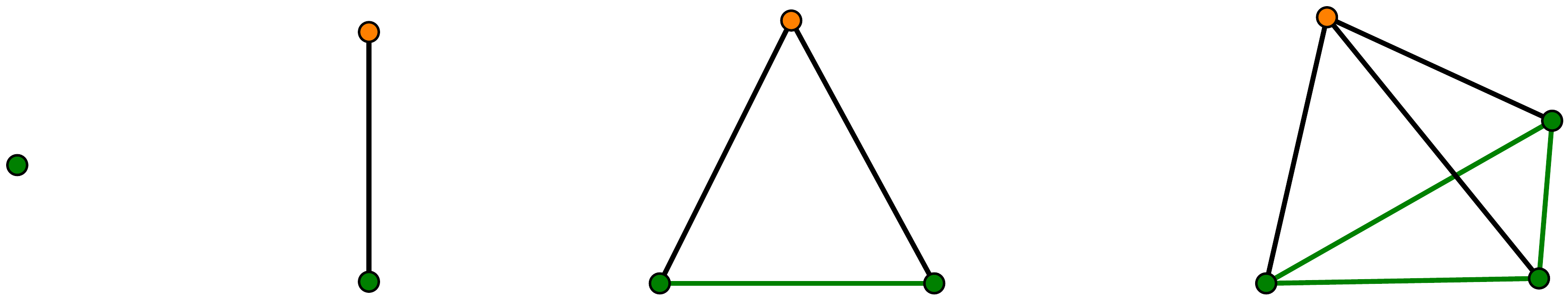}
\caption{A $d$-simplex is the join product of a point and a $(d-1)$-simplex.}
\label{JoinPowersofaPoint}
\end{center}
\end{figure}

 The cartesian product of $\po$ and $\qo$ is obtained by taking the convex hull of $V(\po) \times V(\qo)$ in $\mathbb{R}^{n+m}$. The classical example in this case, is to see a $d$-cube as the cartesian product of an edge -or line segment- with a $(d-1)$-cube (as in Figure~\ref{CartesianPowersofaEdge}).
 \begin{figure}[htbp]
\begin{center}
\includegraphics[width=10cm]{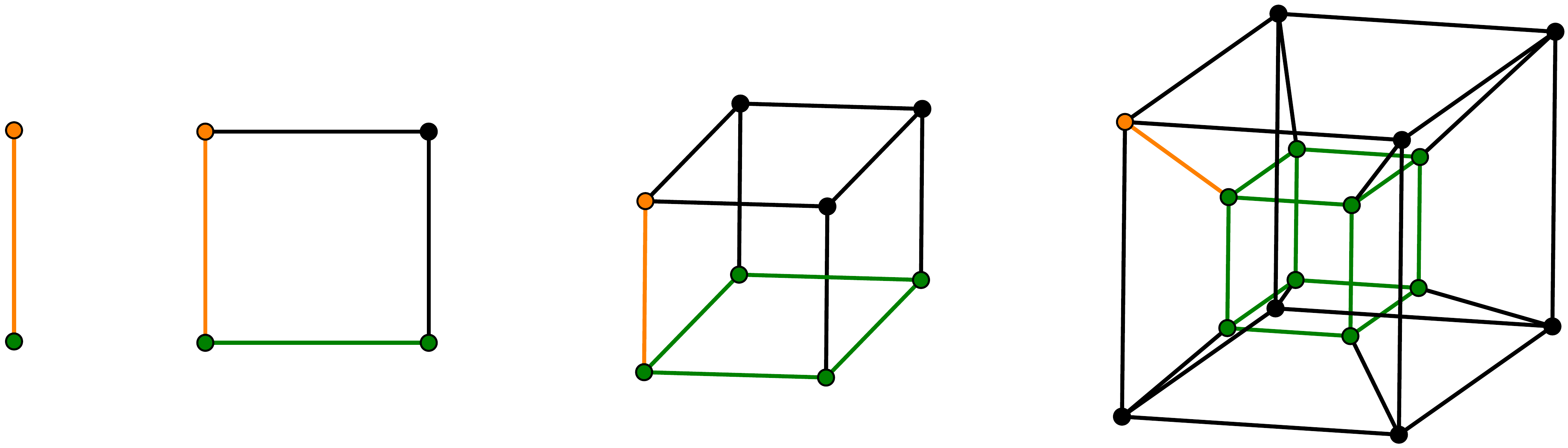}
\caption{A $d$-cube is the cartesian product of an edge and a $(d-1)$-cube.}
\label{CartesianPowersofaEdge}
\end{center}
\end{figure}

 The direct sum of  $\po$ and $\qo$ is slightly more complicated to state. We first  require that $\po$ and $\qo$ contain in their relative interiors the origins of $\mathbb{R}^n$ and $\mathbb{R}^m$, respectively. Then the direct sum is the convex hull of all the points of the form $(v,0)$ and $(0,u)$, where $v\in V(\po)$ and $u\in V(\qo)$. For example, cross polytopes can be generated in this way.

Note that as where in the join product and cartesian product of convex polytopes $\po$ and $\qo$, every face of $\po$ and of $\qo$ is again a face of the product, for the direct sum this is no longer the case. On the other hand, for both the join product and the direct sum, the vertices of the product is the union of the vertices of both polytopes, while the cartesian product of two polytopes, in general, has more vertices.
 It is straightforward to see that the only convex polyhedra (or convex 3-polytopes) that arise as one of these products are precisely the prisms, the pyramids and the bipyramids over polygons.

It is also well-know that, in $\mathbb{R}^4$, the product of two orthogonal circles $\mathbb{S}^1 \times \mathbb{S}^1$ is precisely the {\em flat torus} (also known as the Clifford torus, \cite{clifford}). If we place $n$ points on each of the circles, evenly spaced, we obtain $n$ congruent line segments on each circle. Then,
 take the cartesian product of each point of each $\mathbb{S}^1$ with line segment of the other $\mathbb{S}^1$. What you obtain is a tessellation of the flat torus by squares (see Figure~\ref{torus}). Hence, some maps on surfaces can also be seen as products of polygons.
 
 \begin{figure}[htbp]
\begin{center}
\includegraphics[width=6cm]{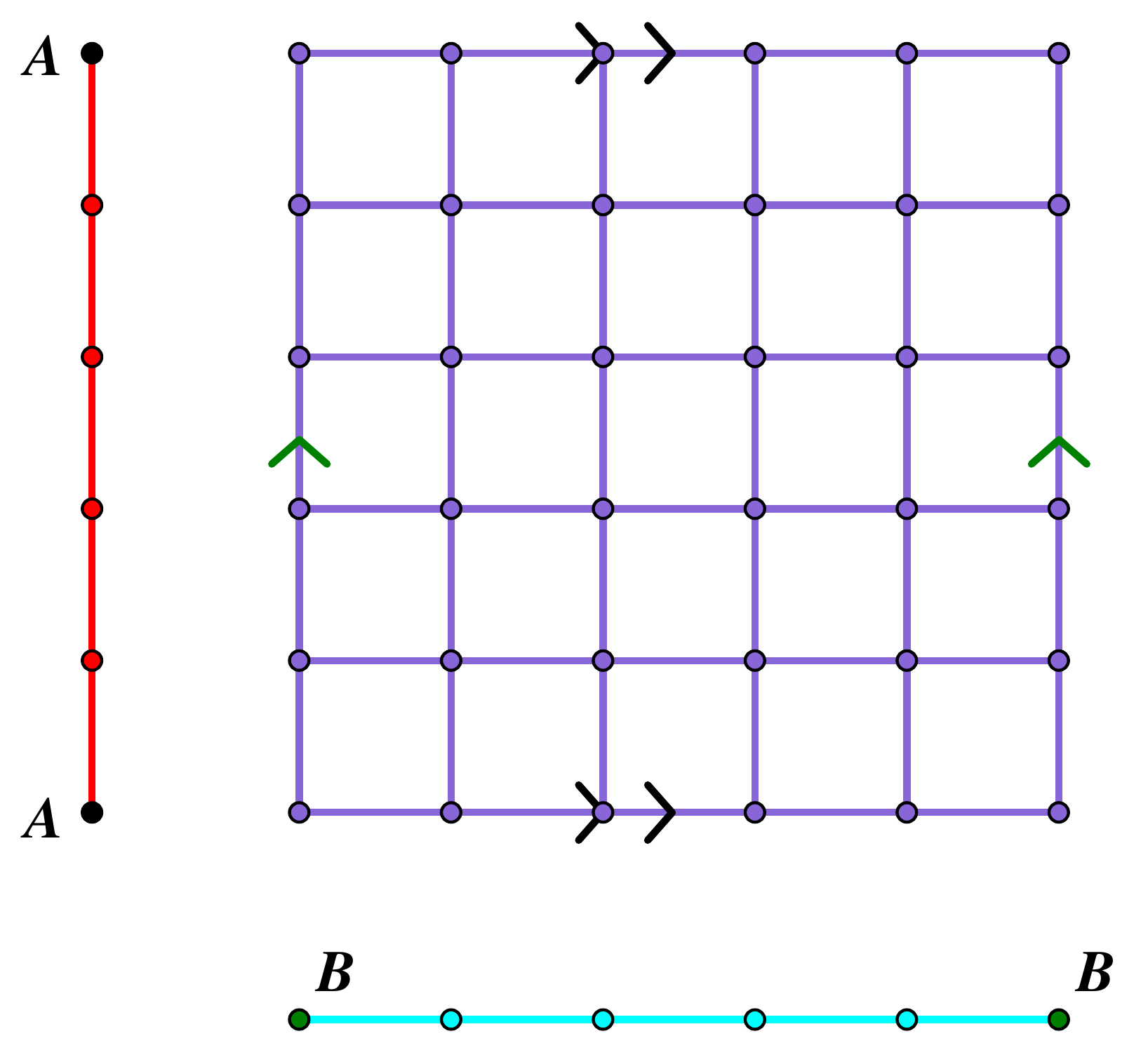}
\caption{The cartesian product of two pentagons can be seen as a tessellation of the torus by squares.}
\label{torus}
\end{center}
\end{figure}

 Abstract polytopes generalize the (face lattice) of convex polytopes. Moreover, they also generalize non-degenerated maps. Hence, it is natural to generalise the four products described above and define them for abstract polytopes, and we do so in Section~\ref{sec:allproducts}. As we show, the four products are closed for abstract polytopes, meaning that  the product of two abstract polytopes is again an abstract polytope (under any of the four products). We shall also study, for each product, which polytopes are trivial, in the sense that the product of them with any  polytope $\po$ is simply $\po$. With that in mind, it is natural to say that a polytope is prime with respect to a given product, if it cannot be decomposed as the product of non-trivial polytopes. 
 We show a unique prime factorization theorem and use it to investigate the structure of the automorphism group of a product. Theorem A summarizes the main results of Sections~\ref{sec:allproducts}, \ref{sec:fact} and \ref{sec:auto}.
 
 \begin{teoA}
Let $\po$ and $\qo$ be two abstract polytopes and $\odot$ be a product of polytopes (either the direct sum, the join, cartesian or topological product). Then,
\begin{enumerate}
\item[a)] The product $\po \odot \qo = \qo \odot \po$ is an abstract polytope. In particular, $\po \odot \po \odot \dots \odot \po=: \po^m$ is also an abstract polytope.
\item[b)]  The polytope $\po$ can be uniquely factorized as a product of prime polytopes.
\item[c)]  If $\po =  \qo_1^{m_1} \odot \qo_2^{m_2}\odot \dots \odot \qo_r^{m_r}$, where the $\qo_i$ are distinct prime polytope with respect to $\odot$, then
$$ \Aut(\po) = \Pi_{i=1}^r (\Aut(\qo_i)^{m_i}\rtimes S_{m_i}).$$
\item[d)] If $\po$ is decomposed as above, where $\qo_i$ is a $k_i$-orbit polytope, then $\po$ is a $k$-orbit polytope, with
$$ k = \frac{(\sum_{i=1}^{r} m_i n_i) !  \Pi_{i=1}^{r} k_i^{m_i} \frac{(m_i n_i)!}{(n_i)!^{m_i} m_i !}}{\Pi_{i=1}^r (m_in_i)!},$$ where if $\odot$ is the join product, then $n_i$ is the rank of $\qo_i$, if $\odot$ is either the cartesian product or the direct sum, then $n_i+1$ is the rank of $\qo_i$, and if $\odot$ is the topological product, then $n_i+2$ is the rank of $\qo_i$
\end{enumerate}
\end{teoA}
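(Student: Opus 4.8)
The plan is to reduce the orbit count to a quotient of two cardinalities. In any abstract polytope the automorphism group acts \emph{freely} on the set of flags: by the diamond condition each flag has a unique $i$-adjacent flag for every rank $i$, so an automorphism fixing one flag fixes all its adjacents, and strong flag-connectivity then forces it to fix every flag, hence to be the identity. Therefore every flag-orbit has size $|\Aut(\po)|$, and the number $k$ of flag-orbits is
\[
 k = \frac{|\mathcal{F}(\po)|}{|\Aut(\po)|},
\]
where $\mathcal{F}(\po)$ is the flag set. Part (c) already gives $|\Aut(\po)| = \prod_{i=1}^r |\Aut(\qo_i)|^{m_i}\, m_i!$, so the entire statement reduces to counting the flags of a product.

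First I would count the flags of a single product $\po\odot\qo$ of two polytopes. In each of the four cases the face poset of $\po\odot\qo$ is obtained from the posets of $\po$ and $\qo$ by forming their product and then identifying the appropriate improper faces; \emph{which} improper faces get identified is exactly what distinguishes the four products and what shifts the relation between $n_i$ and the rank. A flag of $\po\odot\qo$ is then a maximal chain in this poset, and such a chain is determined by a maximal chain of $\po$, a maximal chain of $\qo$, and a \emph{shuffle} recording at each covering step which factor advances. If each copy of $\qo_i$ contributes $n_i$ steps to a product-flag, I would establish the bijection
\[
 |\mathcal{F}(\po\odot\qo)| = \binom{n_\po+n_\qo}{n_\po}\,|\mathcal{F}(\po)|\,|\mathcal{F}(\qo)|
\]
by matching the adjacency structure of chains in the product poset with the interleaving combinatorics. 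Pinning down the correct $n_i$ for each product — how many levels a factor contributes once its improper faces are merged — is the one genuinely product-specific computation, and is where I expect the bookkeeping to be most delicate; this is the main obstacle.

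Next I would iterate the two-factor count. Writing $\po=\qo_1^{m_1}\odot\cdots\odot\qo_r^{m_r}$ as a product of $\sum_i m_i$ prime factors and applying the two-factor formula repeatedly, the nested binomial coefficients telescope into a single multinomial coefficient indexed by the list of step-counts $n_i$ (each repeated $m_i$ times), giving
\[
 |\mathcal{F}(\po)| = \frac{\left(\sum_{i=1}^r m_i n_i\right)!}{\prod_{i=1}^r (n_i!)^{m_i}}\ \prod_{i=1}^r |\mathcal{F}(\qo_i)|^{m_i}.
\]
Here the isomorphic copies of $\qo_i$ occupy distinct positions in the ordered product, so the shuffle does not see them as interchangeable and no symmetry factor $m_i!$ appears at this stage; that factor will enter only through the automorphism group.

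Finally I would substitute into $k = |\mathcal{F}(\po)|/|\Aut(\po)|$. Using $|\mathcal{F}(\qo_i)| = k_i\,|\Aut(\qo_i)|$ (again by freeness of the action on the flags of $\qo_i$), the powers $|\Aut(\qo_i)|^{m_i}$ cancel against those supplied by part (c), leaving
\[
 k = \frac{\left(\sum_{i=1}^r m_i n_i\right)!}{\prod_{i=1}^r (n_i!)^{m_i}\, m_i!}\ \prod_{i=1}^r k_i^{m_i},
\]
which is exactly the displayed expression after cancelling the common factors $\prod_i (m_i n_i)!$ between numerator and denominator. Once the flag-to-(chain, chain, shuffle) bijection is set up uniformly and $n_i$ is determined for each of the four products, the remaining manipulation is a routine cancellation.
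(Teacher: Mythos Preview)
Your argument is correct and rests on the same flag-to-(chains, shuffle) bijection the paper proves as its Lemma~\ref{flagbijection}. The difference is in how the orbit count is extracted from that bijection. You compute $k$ globally as the quotient $|\mathcal{F}(\po)|/|\Aut(\po)|$, relying on freeness and on part~(c) for the denominator, and then cancel the $|\Aut(\qo_i)|^{m_i}$ factors using $|\mathcal{F}(\qo_i)| = k_i\,|\Aut(\qo_i)|$. The paper instead analyzes the $\Aut(\po)$-action on $\mathcal{F}\times\mathcal{A}$ directly: first it shows (Lemma~\ref{orbitstworelatprime} and Corollary~\ref{coro:orbitsrelatprimes}) that for relatively prime factors the automorphisms fix the shuffle coordinate, so orbits are simply products of factor orbits times $|\mathcal{A}|$; then (Lemma~\ref{lemma:orbitspowers}) it shows that for a prime power $\qo^m$ the $S_m$ part acts freely on $\mathcal{A}$, so one first chooses an $S_m$-orbit on $\mathcal{A}$ and then independent $\Gamma(\qo)$-orbits on each flag coordinate. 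Your cardinality quotient is shorter and makes the cancellation transparent, but it tacitly assumes the polytopes are finite; abstract polytopes in this paper include infinite polygons and tessellations, for which $|\mathcal{F}(\qo_i)|$ and $|\Aut(\qo_i)|$ may both be infinite while $k_i$ is finite. The paper's direct orbit analysis avoids that restriction. If you want your approach to cover the infinite case, replace the global division by the observation that the $\Aut(\po)$-action on $\mathcal{F}\times\mathcal{A}$ decomposes exactly as the paper describes; the same cancellation then happens orbit-by-orbit rather than via cardinalities.
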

 As a corollary of part $d)$ of Theorem A, for each product, we also obtained the families of regular and two-orbit  polytopes that are not prime.  
 
The monodromy group of a polytope (either convex or abstract) encodes all the combinatorial information of the polytope. 
It was first studied by Hartley in \cite{hartely99} and he used it to construct regular covers of (non-regular) polytopes. 
It is well-know that the monodromy group of a regular polytope is isomorphic to its automorphism group. However, little is known about monodromy groups of non-regular polytopes.
In the last decade, there has been an effort to understand these groups (see for example \cite{berman2014monodromy}, \cite{hartley2012minimal}, \cite{mixmono}).
In particular in \cite{hartley2012minimal} Hartely et al. study the monodromy group of the prism oven an $n$-gon and compute it, in terms of generators and relations. Moreover, in \cite{berman2014monodromy}, Berman et al. study that of the pyramid over an $n$-gon and show that it is an extension of the symmetric group $S_4$ by a cyclic group which sometimes splits (and determine when).
We show that products of polytopes are useful to understand monodromy groups of some non-regular polytopes. 
 
The results of monodromy groups of polytopes are summarized in the following theorem

\begin{teoB}
Let $\qo_1, \dots \qo_r$ be polytopes of ranks $n_1, n_2, \dots, n_r$, respectively and let $\odot$ be a product of polytopes. Let $\po=\qo_1\odot \qo_2 \odot \dots \odot \qo_r$. 
Then,
\begin{enumerate}
\item[a)] The monodromy group $\Mon(\po)$ is an extension of $S_n$, where $n=\Sigma_{i=1}^r n_i +c$, and $c=r$, if $\odot=\Join$, $c=0$ if $\odot=\times, \oplus$ and $c=-r$ if $\odot=\square$.
\item[b)] The extension of $a)$ splits (at least) in the following cases: 
\begin{itemize}
\item If $\po$ is the prism (or the bipyramid) over an $n$-gon. In this case $\Mon(\po) \cong K \rtimes S_3$, where $K$ is an extension $(C_2)^3$ by $(C_m)^3$ ($m=\frac{n}{gcd(n,4)}$); moreover, this extension splits whenever $n$ in not congruent to $0$ modulo $8$, in which case $\Mon(\po) \cong \big( (C_2)^3 \rtimes (C_m)^3\big) \rtimes S_3$.
\item If $\po$ is the prism (or the bipyramid) over a $3$-polytope having the property that all its vertex figures (faces) are isomorphic to an $n$-gon, with $n$ not congruent to $0$ modulo $9$;
\item If $\odot=\square$ and each $\qo_i$ has rank 2.
\end{itemize}
\end{enumerate}
\end{teoB}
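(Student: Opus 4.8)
The plan is to realize $\Mon(\po)$ as a permutation group on the flags of the product and to read off the symmetric group from a \emph{shuffle} statistic. Recall that $\Mon(\po)$ is generated by the flag-changing involutions $r_0,\dots,r_{N-1}$, where $N=\rank(\po)$, with $r_i$ sending each flag to its unique $i$-adjacent flag. From the description of the four products in Section~\ref{sec:allproducts}, a flag of $\po=\qo_1\odot\cdots\odot\qo_r$ is encoded by a flag $\Phi_i$ in each factor $\qo_i$ together with a shuffle recording how the covering steps of the $\Phi_i$ are interleaved inside $\po$. Each factor $\qo_i$ (of rank $n_i$) contributes a block of $m_i$ shuffleable steps, with $m_i=n_i+1$ for the join, $m_i=n_i$ for the cartesian product and the direct sum, and $m_i=n_i-1$ for the topological product; in the latter three cases there are in addition one or two covering steps (reaching the common top and/or leaving the common bottom) that are shared by all factors and are never shuffled. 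Thus the shuffle is a word of length $n=\sum_i m_i=\sum_i n_i+c$ in the alphabet $\{1,\dots,r\}$ containing $m_i$ copies of the letter $i$, which recovers the claimed value of $c$ in each case.

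First I would make precise the forgetful map $\pi$ sending a flag to its underlying \emph{colour word} (recording, for each shuffleable step, the factor it comes from), and check that $\Mon(\po)$ permutes the fibres of $\pi$. A generator $r_i$ interchanges two adjacent steps of the flag: if both are shuffleable and carry different colours, the colour word is transposed, whereas if they carry the same colour (or one of them is a common top/bottom step) the word is fixed and $r_i$ performs an honest flag change inside a single factor. In every case the induced action on colour words is exactly that of the corresponding adjacent transposition $s_i$ permuting positions of the word. Since the shuffleable steps form a contiguous block, the $r_i$ that swap two shuffleable steps realize all adjacent transpositions and hence generate the full symmetric group on the $n$ positions, while for a genuine product (at least two colours occur) this position action is faithful. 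Passing to the induced action therefore yields a surjection $\Mon(\po)\twoheadrightarrow S_n$ whose kernel $N$ consists of the monodromy elements fixing every colour word, i.e.\ the ``within-factor'' connection elements. This proves $a)$.

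For $b)$ the task is to split the short exact sequence $1\to N\to\Mon(\po)\to S_n\to 1$, that is, to lift the Coxeter generators to involutions $\tilde s_i\in\Mon(\po)$ projecting to $s_i$ and satisfying the \emph{exact} relations of $S_n$. The commuting relations $(\tilde s_i\tilde s_j)^2=1$ for $|i-j|\ge 2$ hold automatically, since the two operations act on disjoint pairs of steps, so the only obstruction is forcing the braid relations $(\tilde s_i\tilde s_{i+1})^3=1$ on the nose rather than merely modulo $N$. For each listed family I would first compute $N$ explicitly, generalizing the generator-and-relation analyses of Hartley et al.\ \cite{hartley2012minimal} and Berman et al.\ \cite{berman2014monodromy}: for the prism (and dually the bipyramid) over an $n$-gon this identifies $N=K$ as the stated extension of $(C_2)^3$ by $(C_m)^3$ with $m=n/\gcd(n,4)$; for the topological product of rank-$2$ polytopes each factor contributes a single shuffleable step, $N$ decouples as a direct product over the factors, and a section is given by the ``pure'' step-swaps, so the splitting holds unconditionally.

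I expect the delicate point to be determining \emph{exactly} when the braid relation can be imposed, since the general extension need not split. Concretely, $\tilde s_i\tilde s_{i+1}$ projects to an element of order $3$ in $S_n$, so $(\tilde s_i\tilde s_{i+1})^3$ lies in $N$; splitting amounts to choosing the involutions $\tilde s_i$ within their $N$-cosets so that this element is trivial, and whether this is possible is governed by the order of the relevant cyclic factor of $K$. Tracking how $m=n/\gcd(n,4)$ (respectively the vertex-figure data in the $3$-polytope case) interacts with the threefold braid word is what produces the congruence conditions $n\not\equiv 0\pmod 8$ for the prism/bipyramid over an $n$-gon and $n\not\equiv 0\pmod 9$ for the prism/bipyramid over a $3$-polytope with $n$-gonal vertex-figures; verifying that the obstruction is a genuine cohomological one, surviving precisely on the excluded residues, is the main work of the proof.
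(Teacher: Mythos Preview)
Your treatment of part~$a)$ is essentially the paper's argument: your ``colour word'' is exactly the element $a\in\mathcal{A}$ of Lemma~\ref{flagbijection}, and the embedding of $\Mon(\po)$ into the wreath product $M\wr_{\mathcal A}S_n$ together with the surjectivity of the projection is Proposition~\ref{prop:Monext}. Good.

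For part~$b)$, however, you have misread where the congruence condition $n\not\equiv 0\pmod 8$ enters in the prism case, and this leads your outline astray. In the paper the outer extension $1\to K\to\Mon(\Pri(\po))\to S_3\to 1$ for the prism over an $n$-gon splits \emph{unconditionally}: with the explicit generators $s_0,s_1,s_2$ written as elements of $\W$, one simply checks that $s_1s_2$ has order exactly~$3$, so $\langle s_1,s_2\rangle\cong S_3$ is already a section. There is no obstruction to the braid relation here and no cohomological analysis is needed. The condition $n\not\equiv 0\pmod 8$ governs a \emph{different} splitting, namely whether the kernel $K$ itself, which is an extension of $(C_2)^3$ by $H\cong(C_m)^3$ with $m=n/\gcd(n,4)$, decomposes as a semidirect product. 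The paper establishes this by exhibiting explicit generators $a=s_0$, $b=(s_0s_1)^2$, $c=s_2bs_2$ of $K$, identifying $H=\langle b^2,c^2,d^2\rangle$, and observing that when $m$ is odd the elements $a,b^m,c^m$ lift the quotient $(C_2)^3$; when $m$ is even no such involutions exist outside $H$. Your proposed mechanism---forcing $(\tilde s_i\tilde s_{i+1})^3=1$ and tracking its failure via $m$---is solving the wrong problem for this case.

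Your outline is closer to the mark for the prism over a $3$-polytope with $n$-gonal vertex-figures: there the paper observes that $\langle s_1,s_2,s_3\rangle$ is isomorphic to the monodromy group of the \emph{pyramid} over an $n$-gon, and imports from \cite{berman2014monodromy} that this pyramid group contains a copy of $S_4$ precisely when $n\not\equiv 0\pmod 9$; that copy then serves as the section for the outer $S_4$-extension of the prism. So the $\pmod 9$ condition really is about lifting the braid relations---but via the pyramid, not the prism, computation. For the topological product of polygons your sketch is correct and matches the paper's direct calculation $\Mon(\qo_1\square\cdots\square\qo_r)\cong (D_p)^r\rtimes S_r$.
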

By \cite{berman2014monodromy}, we already know that the monodromy group of a pyramid over an $n$-gon is an extension of  $S_4$ by $(C_m)^4$, where $m=\frac{p}{gcd(3,p)}$, and that such extension splits whenever $n$ is not congruent to $0$ modulo $9$. Our techniques to show Theorem B can also be used to show this result.

\section{Abstract polytopes}\label{sec:abpol}

Abstract polytopes generalise the (face-lattice) of classical polytopes as combinatorial structures. In this chapter we review the basic theory of abstract polytopes and refer the reader to \cite{arp} for a detail exposition of the subject.

An {\em (abstract) $n$-polytope} (or {\em (abstract) polytope of rank $n$}) $\po$
is a partially ordered set whose elements 
are called {\em faces} 
that satisfies the following properties. 
It contains a minimum face $F_{-1}$ and a maximum face $F_n$.  
These two faces are the {\em improper} faces of $\po$; all other faces are said to be {\em proper}.
There is a rank function from $\po$ to the set $\{-1, 0, \dots, n\}$ such that $\rank(F_{-1})= -1$ and
$\rank(F_n)=n$. 
The faces of rank $i$ are called {\em $i$-faces}, the $0$-faces
are called {\em vertices}, the $1$-faces are called {\em edges} and the
$(d-1)$-faces are called {\em facets}.
Every maximal totally ordered subset (called {\em flag}) contains precisely $n+2$ elements including $F_{-1}$ and $F_n$. 
If $\Phi$ is a flag of $\po$ we shall often denote by $\Phi_i$ the $i$-face of $\Phi$.
For incident faces $F \le G$, we define the {\em section} $G/F := \{H \,|\,F\le H\le G\}$, and when convenient, we identify the section $F/F_{-1}$ with the face $F$ itself in $\mathcal P$. 
The section $F_n/F_0 := \{H \,|\, H\ge F_0\}$, when $F_0$ is a vertex, is called the {\em vertex-figure} of $\mathcal P$ at $F_0$, and if $F_i$ is a face of rank $i>0$, then $F_n/F_i$ is a {\em co-face} of $\po$.
All sections $G/F$ of $\po$ are by themselves posets with a rank function, minimum and maximum faces and satisfy that all their maximal chains have the same number of elements.
A section $G/F$ is said to be {\em connected}, if $\rank(G)-\rank(F)\leq 2$ or if whenever $F', G' \in G/F$, with $F', G' \neq F, G$, there exists a sequence of faces
$$F'=F^0,F^1,F^2, \dots F^k=G',$$
such that $F< F^i <G$  and either $F^i\leq F^{i+1}$ or $F^{i+1}\leq F^{i}$, for every $i=0, \dots k$.
We further ask that $\po$ is {\em strong connectivity}, meaning that all the sections of $\po$, including itself, are connected.
The last condition that $\po$ should satisfy to be a polytope, known as the {\em diamond condition}, is the following.
If $F$ and $G$ are incident faces such that $\rank(G) - \rank(F) = 2$, then there exist precisely two faces $H_1$ and $H_2$ such that $F < H_1, H_2 < G$. 
This property implies that
for any flag $\Phi$ and any $i \in \{0, \dots, n-1\}$ there exists a unique flag
$\Phi^i$ differing from $\Phi$ only in the $i$-face.
The flag $\Phi^i$ is called the {\em $i$-adjacent flag} of $\Phi$. 

Up to isomorphism, there is a unique $n$-polytope for $n=0,1$. Polygons (including infinite ones) are $2$-polytopes and no-degenerated maps are 3-polytopes. In general, a convex $d$-polytope can be regarded as an abstract $d$-polytope.

It is not difficult to see that the diamond condition implies that the incidence structure consisting of all vertices and edges of a polytope $\po$, together with the incidence given in $\po$ is a graph. We shall refer to this graph as the {\em $1$-skeleton} of $\po$.


 An {\em automorphism} of a polytope is an order-preserving permutation of its faces. 
 We denote the group of automorphisms of $\po$ by $\G({\po})$. 
It is straightforward to see that $\G(\po)$ acts on the set of flags, denoted by $\fl(\po)$, in the natural way. Moreover,  the strong connectivity of $\po$ implies that such action is free (or semi-regular).

An $n$-polytope $\mathcal P$ is said to be {\em regular} whenever $\G({\po})$ acts transitively on the flags. 
We say that $\po$ is  a {\em $k$-orbit polytope} if $\G({\po})$ has precisely $k$ orbits on $\fl(\po)$. (Hence, regular polytopes and 1-orbit polytopes are the same.)

Given a polytope $\po$, one can define the dual of $\po$, denoted by $\po^*$, as the poset whose elements coincide with the elements of $\po$, but the order is reversed. In other words, $\po^*$ is the dual of $\po$ if there exists a bijection $\delta: \po \to \po^*$ that reverses the order. Note that $(\po^*)^* \cong \po$ and that $\G(\po) \cong \G(\po^*)$.


The {\em monodromy group} $\Mon(\po)=\langle r_0, r_1, \dots r_{n-1}\rangle$ of an $n$-polytope $\po$ is the subgroup of the permutations of the set of flags $\fl(\po)$ that is generated by the permutations $r_i:\Phi \mapsto \Phi^i$ (see \cite{hartely99, hubard2009monodromy}). 
The elements of the monodromy group are, in general, far from being automorphisms of the polytope. 
By the connectivity of $\po$, $\Mon(\po)$ is transitive on $\fl(\po)$.
One can think of the generators of the monodromy group as the instructions to assemble the flags of the polytope. 
In fact the monodromy group possesses all the combinatorial information of the polytope.
Given $w\in\Mon(\po)$, $\gamma\in\G(\po)$ and $\Phi \in \fl(\po)$ it is straightforward to see that $(\Phi w) \gamma = (\Phi\gamma)w$.

The generators $r_0, r_1,\dots,r_{n-1}$ of $\Mon(\po)$ are involutions and satisfy, at least, the relations $r_ir_j=r_jr_i$ whenever $|i-j|>1$. 
Whenever $\po$ is a regular polytope, its monodromy group and its automorphism group are isomorphic. However, in other cases little is known about the structure of the monodromy group of a polytope (see \cite{mixmono} for further discussion on the subject).

\subsection{Hasse diagram}
Given a poset $\po$ and $F,G \in \po$, we shall say that {\em $F$ is covered by $G$} if $F<G$ and there exists no $H \in \po$ such that   $F< H < G$. 
In particular, if $\po$ is a polytope, then a face $F$ is covered by a face $G$ whenever $F<G$ and $\rank (G) -\rank (F) = 1$.
The {\em Hasse diagram} of the poset $\po$, denoted by $H(\po)$ is the directed graph whose vertices are the elements of $\po$ and there is an arc from a face $G$ to a face $F$ whenever $F$ is covered by $G$.

Note that if $\po$ is a polytope, then the digraph $H(\po)$ has one sink, one source, is acyclic and  directed paths of maximal length have $n+2$ vertices. Moreover, $F < G$ in $\po$ if and only if there is a directed path from $G$ to $F$ in $H(\po)$. Therefore if $\po$ and $\qo$ are two polytopes such that there exists and isomorphism between $H(\po)$ and $H(\qo)$, then it induces an isomorphism between $\po$ and $\qo$ (and viceversa: isomorphisms between $\po$ and $\qo$ induce isomorphisms between their Hasse diagrams). Note further that $\G(\po) \cong \Aut(H(\po))$.

A poset $\po$ is said to be {\em discrete} if the transitive closure of the Hasse diagram $H(\po)$ is $\po$ itself. For example $\mathbb{Z}$ is a discrete poset, while $\mathbb{Q}$ is not. All abstract polytopes are discrete posets. Hence, in this paper, unless otherwise indicated, all posets are discrete.

\section{Product of posets and digraphs}\label{sec:productsgraphs}

As we have seen before, one can identify an abstract polytope with its Hasse diagram. For the purpose of this paper it shall prove helpful to often think of abstract polytopes as directed graphs (with the induced properties). Thus, we study here some properties about products of posets and digraphs.

Given posets $\qo_i$ with $i \in I$, the {\em (cardinal) product}, $\Pi_{i \in I}\qo_i$ is the ordered set on their Cartesian product, with component-wise order. 
Denoting by $\po*\qo$ the product of two posets $\po$ and $\qo$, it is then straightforward that $\po*\qo=\qo*\po$ and that, 
if $\mathcal K$ is yet another poset, then $\po*(\qo*{\mathcal K})=(\po*\qo)*{\mathcal K}=:\po*\qo*{\mathcal K}$. We denote by $\po^k$ to the product of $k$ copies of $\po$.

Given $F, G \in \po$, with $F \leq G$, the set $\{H \in \po \mid F\leq H \leq G\}$ is the {\em closed interval} between $F$ and $G$. Similarly, $\{H \in \po \mid F< H < G\}$ is said to be an {\em open interval}. (Hence, sections of a polytope are in fact closed intervals of the poset.)
 If the poset $\po$ does not have a minimum or a maximum, the sets $\{H \in \po \mid F\leq H\}$, 
$\{H \in \po \mid F\geq H\}$ , $\{H \in \po \mid F< H\}$ and $\{H \in \po \mid F> H\}$ are also said to be (closed/open, resp.) intervals of $\po$.
We say that a poset $\po$ is {\em factorable} if there exist non-trivial posets $\po_1$ and $\po_2$ such that $\po=\po_1*\po_2$ and that $\po$ is {\em prime} if no such factorization exists.

 In \cite{Hash2} Hashimoto shows that if we have  two proper factorizations of a poset $\po$, then there exists another proper factorization of $\po$ that is a refinement of the two original ones. 
 Hashimoto then uses this result to show the following theorem. 
 
\begin{theorem}[\cite{Hash2}]
\label{uniquefactposet}
Every connected poset has a unique prime factorisation (up to isomorphism).
\end{theorem}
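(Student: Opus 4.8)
The plan is to derive unique prime factorisation from the common-refinement result of Hashimoto quoted above (\cite{Hash2}), following the template by which the Schreier refinement theorem yields Jordan--Hölder uniqueness for groups. I would split the statement into an \emph{existence} claim (every connected poset is a product of primes) and a \emph{uniqueness} claim (the multiset of prime factors is determined up to isomorphism), and treat them in that order.

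For existence I would argue by induction on the size of $\po$ (for infinite posets, on a suitable well-founded rank invariant). If $\po$ is prime there is nothing to prove. Otherwise $\po\cong\po_1*\po_2$ with both factors non-trivial, i.e.\ each having at least two elements, so $|\po_1|,|\po_2|<|\po|$ because the cardinal product multiplies cardinalities. The one point that must be checked is that the inductive hypothesis applies, i.e.\ that $\po_1$ and $\po_2$ are themselves connected. This follows from a short observation about the cardinal product: along any comparability path in $\po_1*\po_2$ the second coordinate never leaves a single connected component of $\po_2$, since $(a,b)\le(a',b')$ forces $b\le b'$; hence if either factor were disconnected the product would be disconnected as well. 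So each factor is connected, the induction goes through, and $\po$ is a finite product of connected primes.

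For uniqueness, suppose $\po\cong A_1*\cdots*A_r\cong B_1*\cdots*B_s$ with every $A_i$ and $B_j$ a connected prime. By the refinement result there is a factorisation $\po\cong C_1*\cdots*C_t$ refining both, meaning there is a partition of $\{1,\dots,t\}$ into blocks $S_1,\dots,S_r$ with $A_i\cong\prod_{k\in S_i}C_k$, and an analogous partition realising the $B_j$. Since each $A_i$ is prime, a product isomorphic to it can have at most one non-trivial factor; thus each block $S_i$ contains exactly one non-trivial $C_k$, isomorphic to $A_i$, the rest being copies of the one-element poset. Consequently the non-trivial $C_k$ form, up to isomorphism, precisely the multiset $\{A_1,\dots,A_r\}$, and the same argument applied to the $B$-blocks shows they also form $\{B_1,\dots,B_s\}$. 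Comparing yields $r=s$ and an isomorphism-preserving bijection between the $A_i$ and the $B_j$, which is the asserted uniqueness.

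The main obstacle is matching the cited refinement result to the form needed here. Read as a general common-refinement statement, it makes uniqueness essentially immediate, and the remaining work is the existence induction together with the bookkeeping of trivial factors. If instead one only has the \emph{binary} refinement (a common refinement of two two-factor factorisations), the general common refinement must be bootstrapped by induction on the number of factors, exactly as Schreier's theorem is assembled from the Zassenhaus butterfly lemma; this step typically also requires first extracting a cancellation law $A*C\cong B*C\Rightarrow A\cong B$ for connected posets. In either reading the delicate point is ensuring that refining a prime factor can only peel off copies of the one-element poset, so that the collapse in the uniqueness argument is legitimate, and, for infinite connected posets, that the existence induction terminates---which is why a well-founded size or rank invariant, rather than mere finiteness, is the safe hypothesis.
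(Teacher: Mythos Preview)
The paper does not give its own proof of this theorem: it is quoted from Hashimoto \cite{Hash2}, with the single sentence of context that Hashimoto first establishes a common-refinement result for factorisations and then derives unique prime factorisation from it. Your proposal is precisely a fleshing-out of that sketch---refinement for uniqueness, plus an explicit existence argument by induction---so it matches the approach the paper attributes to the source. The caveats you flag (termination of the existence induction for infinite posets, and whether the cited refinement is binary or general) are real and are exactly the places where one would need to consult \cite{Hash2} directly rather than the paper, which does not address them.
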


In this context, a poset $\po$ is said to be {\em connected} if for any two elements $F,G \in \po$, there exists a sequence $F=F_0, F_1, \dots F_k=G$ such that $F_i \leq F_{i-1}$ or $F_i \geq F_{i-1}$ for every $i\in \{1, \dots, k\}$.

Recall now that a graph $G$ is said to be connected if for any two vertices $u$ and $v$, there is a $u$-$v$ path in $G$. A digraph is said to be {\em weakly connected} if its underlying graph (where the arcs are replaced by edges) is connected. 
 A discrete poset is connected if and only if is its Hasse diagram is weakly connected.

Given two digraphs $\mathcal{D}_1= (V_1, E_1)$ and $\mathcal{D}_2= (V_2, E_2)$, the {\em cartesian product} of $\mathcal{D}_1$ and $\mathcal{D}_2$ is a digraph $\mathcal{D} = \mathcal{D}_1 \times \mathcal{D}_2$ whose vertex set is $V(\mathcal{D})=V_1 \times V_2$, and such that there is an arc $(v_1,v_2) \to (w_1,w_2)$ if $v_1=w_1$ and $v_2 \to w_2 \in A_2$ or if $v_2=w_2$ and $v_1 \to w_1 \in A_1$. 

It was shown in \cite{rival} that the Hasse diagram of a product of orders is a product of Hasse diagrams. In fact we have the following proposition. 

\begin{prop}[\cite{rival}]
Let $\qo_i$, with $i \in I$, be a family of posets. If $\po \cong \Pi \qo_i$, then $H(\po) \cong \Pi H(\qo_i)$.
\end{prop}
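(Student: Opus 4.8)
The plan is to show that the identity map on the common underlying set $\Pi_i \qo_i$ is an isomorphism of digraphs between $H(\po)$ and $\Pi_i H(\qo_i)$. Both digraphs have this Cartesian product as their vertex set, so it suffices to prove that their arc sets coincide. An arc of $H(\po)$ from $x=(x_i)$ to $y=(y_i)$ records that $y$ is covered by $x$ in the product order, while, according to the definition of the Cartesian product of digraphs given above (extended coordinatewise to the family), an arc of $\Pi_i H(\qo_i)$ from $x$ to $y$ records that the tuples agree in every coordinate except one index $j$, where $y_j$ is covered by $x_j$ in $\qo_j$. Thus the entire statement reduces to the following characterisation of covering relations in a cardinal product: $y$ is covered by $x$ if and only if there is exactly one index $j$ with $y_i = x_i$ for all $i \neq j$ and $y_j$ covered by $x_j$ in $\qo_j$.

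The backward implication is the easy direction. If the tuples agree off a single coordinate $j$ where $y_j$ is covered by $x_j$, then $y < x$ is immediate, and any $z=(z_i)$ with $y \le z \le x$ must satisfy $z_i = y_i = x_i$ for $i \neq j$ and $y_j \le z_j \le x_j$; since $y_j$ is covered by $x_j$, the middle coordinate forces $z_j \in \{y_j, x_j\}$, so $z$ equals $y$ or $x$. Hence $y$ is covered by $x$.

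The forward implication carries the real content, and the key device is the construction of intermediate elements by editing a single coordinate. Suppose $y$ is covered by $x$, so $y_i \le x_i$ for all $i$ with strict inequality somewhere. First I would rule out the tuples differing in two coordinates: if $y_j < x_j$ and $y_k < x_k$ with $j \neq k$, then the tuple $z$ obtained from $y$ by replacing only its $j$-th entry with $x_j$ satisfies $y < z < x$ and is distinct from both endpoints (its $j$-th entry exceeds $y_j$, its $k$-th entry lies below $x_k$), contradicting covering. Hence the tuples differ in exactly one coordinate $j$. If $y_j$ were not covered by $x_j$ in $\qo_j$, there would exist $w$ with $y_j < w < x_j$, and replacing the $j$-th entry of $y$ by $w$ would again produce a strict intermediate element, a contradiction. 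Therefore $y_j$ is covered by $x_j$, which completes the characterisation.

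I expect the main obstacle to be bookkeeping rather than anything conceptual: one must confirm that the single-coordinate editing arguments remain valid for an arbitrary index set $I$, where the intermediate tuples are still well defined and all comparisons stay coordinatewise, and that the digraph product in the statement is indeed the natural extension to families of the two-factor Cartesian product, whose arcs alter exactly one coordinate. Given the paper's standing discreteness assumption, no difficulties arise from transitive-closure subtleties, so the covering-relation correspondence established above upgrades directly to the desired digraph isomorphism $H(\po) \cong \Pi_i H(\qo_i)$.
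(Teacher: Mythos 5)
Your argument is correct and complete: the covering-relation characterisation in a cardinal product (covers differ in exactly one coordinate, where that coordinate is itself a cover) is exactly what identifies the arc sets of $H(\po)$ and $\Pi_i H(\qo_i)$, and both directions of your characterisation are sound, including for infinite index sets. The paper itself states this proposition as a citation to Rival and gives no proof, so there is nothing to compare against; your self-contained argument is the standard one and would serve as a proof here.
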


Moreover, in \cite{walker}, Walker showed that if $\po$ is a poset such that $H(\po) = \Pi_{i \in I} G_i$, for some digraphs $G_i$, then there exist posets $\qo_i$,  such that $H(\qo_i) = G_i$ for each $i \in I$ and $\Pi_{i \in I}\qo_i = \po$.

Given a digraph $\mathcal D$, if there exists digraphs $\mathcal{D}_1$ and $\mathcal{D}_2$ such that $\mathcal{D}=\mathcal{D}_1 \times \mathcal{D}_2$, where $|V_1|,|V_2|>1$, then we say that $\mathcal{D}$ is {\em cartesian-factorable} (or, simply, {\em factorable}) and that $\mathcal{D}=\mathcal{D}_1 \times \mathcal{D}_2$ is a {\em proper factorization} of $\mathcal{D}$. If no such factorization exists, we shall say that $\mathcal{D}$ is {\em prime}. 

Hence, a poset $\po$ is prime if and only if its Hasse diagram $H(\po)$ is a prime digraph.

\section{Products of polytopes}\label{sec:allproducts}

As pointed out in the introduction, geometrically, there are several kinds of products of polytopes. In this section we define each of them as products of abstract polytopes. We shall see that although the different products that we define have different geometric interpretations, they can all be expressed in terms of cardinal products of posets.

\subsection{Join product}

Geometrically the most natural product might be the cartesian one, however when considering abstract polytopes the natural product arrises from the product of posets. We therefore start by studying such product of polytopes.

Given two polytopes $\po$ and $\qo$, the {\em join product} of $\po$ and $\qo$, denoted $\po \Join \qo$, is defined as the set
\begin{eqnarray}
\label{setjoin}
 \po \Join \qo = \{(F,G)  \mid  F \in \po , G \in \qo\},
 \end{eqnarray}
 where the order is given by 
 \begin{eqnarray}
 \label{orderjoin}
 (F, G) \leq_{\po \Join \qo} (F', G') \ \mathrm{ if} \ \mathrm{ and} \  \mathrm{ only} \ \mathrm{ if}  \ F \leq_{\po} F' \ \mathrm{ and } \ G \leq_{\qo} G'.
  \end{eqnarray}
  
  In other words, $\po \Join \qo$ is simply the cardinal product $\po*\qo$ of the posets $\po$ and $\qo$.
(We have changed the notation as we shall only use the join product $\po \Join \qo$ when both $\po$ and $\qo$ are polytopes, while we shall keep referring to the product $\po * \qo$ as the product of any two posets.)
It is therefore straightforward to see that $\po \Join \qo$ is indeed a poset. 
Moreover,
  $\po \Join \qo = \qo \Join \po$ and  if $\mathcal K$ is another abstract polytope, then $(\po \Join \qo) \Join {\mathcal K} = \po \Join (\qo \Join {\mathcal K}) = \po \Join \qo \Join {\mathcal K} $. Hence, for every natural number $k$, $\po^k$ denoted the join product of $\po$, $k$-times.
Observe further that a section of $\po \Join \qo$ is the join of a section of $\po$ and a section of $\qo$. 
That is,

\begin{lemma}
Let $\po$ and $\qo$ be two polytopes and consider the join $\po \Join \qo$. Let $f,F\in \po$, $g,G \in \qo$ such that $f\leq F$ and $g \leq G$. Then 
$$ (F,G)/(f,g) \cong F/f \Join G/g.$$
\end{lemma}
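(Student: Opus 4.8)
The plan is to show that the identity map on pairs is the desired isomorphism, which reduces to checking that the two posets have the same underlying set and the same order relation. Both facts fall out immediately from the fact that the join product is nothing but the cardinal product of posets, whose order is defined component-wise.

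First I would unwind the definition of the section. By definition, the section $(F,G)/(f,g)$ inside $\po \Join \qo$ is the closed interval $\{(H,K) \mid (f,g) \leq_{\po \Join \qo} (H,K) \leq_{\po \Join \qo} (F,G)\}$. Applying the defining equivalence~\eqref{orderjoin} of the order on the join to both inequalities, the condition $(f,g) \leq_{\po \Join \qo} (H,K) \leq_{\po \Join \qo} (F,G)$ holds precisely when $f \leq_{\po} H \leq_{\po} F$ and $g \leq_{\qo} K \leq_{\qo} G$; that is, exactly when $H \in F/f$ and $K \in G/g$. Hence, as a set,
$$ (F,G)/(f,g) = \{(H,K) \mid H \in F/f,\ K \in G/g\},$$
which is precisely the underlying set of the join product $F/f \Join G/g$ according to~\eqref{setjoin}.

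Next I would compare the two orders. The order on $(F,G)/(f,g)$ is the restriction of $\leq_{\po \Join \qo}$ to this interval, so for elements $(H,K),(H',K')$ of the section one has $(H,K) \leq (H',K')$ if and only if $H \leq_{\po} H'$ and $K \leq_{\qo} K'$. On the other hand, the order on $F/f \Join G/g$ is, again by~\eqref{orderjoin}, the component-wise order coming from the sections $F/f$ and $G/g$, whose orders are themselves the restrictions of $\leq_{\po}$ and $\leq_{\qo}$. These two descriptions of the order coincide on the common underlying set, so the identity map $(H,K) \mapsto (H,K)$ is an order isomorphism, establishing $(F,G)/(f,g) \cong F/f \Join G/g$.

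I do not expect any genuine obstacle here: the statement is essentially the compatibility of the cardinal product with the operation of passing to intervals. The only point that requires a moment of care is to make sure the order on each section is the \emph{induced} order from the ambient poset, so that the component-wise orders on the two sides are literally the same relation; once this is noted, the identity map does all the work.
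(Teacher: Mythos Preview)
Your proof is correct and matches the paper's treatment: the paper states this lemma as an immediate observation without supplying a proof, and your argument---checking that the underlying sets and the component-wise orders coincide, so that the identity map is an order isomorphism---is exactly the routine verification the authors implicitly have in mind.
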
 

If  $\po$ and $\qo$ are two polytopes of ranks $n$ and $m$ respectively,
then the rank functions of $\po$ and $\qo$ naturally induce a rank function on $\po \Join \qo$, namely, $$\rank_{\po \Join \qo} (F,G) = \rank_{\po}(F) + \rank_{\qo} (G) +1.$$ 
Hence, the rank function of $\po \Join \qo$ has range from $-1$ to $n+m+1$, and therefore $\po\Join\qo$ shall have rank $n+m+1$.

It is not difficult to see that, if $P_{-1}$ and $Q_{-1}$ denote the minimal faces of $\po$ and $\qo$ respectively, then the vertices of $\po \Join \qo$ are of the form $(P_{-1}, v)$ or $(u, Q_{-1})$, where $v$ is a vertex of $\qo$ and $u$ is a vertex of $\po$. Hence, the vertices of $\po \Join \qo$ are in bijection with the union of the vertices of $\po$ and $\qo$. In general, for $1 \leq i \leq n-1$, if $F$ is an $i$-face of either $\po$ (or $\qo$), then $(F, Q_{-1})$ (or $(P_{-1},F)$) is an $i$-face of $\po \Join \qo$, but these are not all the $i$-faces of $\po \Join \qo$.

  \begin{prop}
Let $\po$ and $\qo$ be two polytopes of ranks $n$ and $m$, respectively. Then $\po\Join\qo$ is a polytope of rank $n+m+1$.
\end{prop}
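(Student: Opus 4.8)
The plan is to check directly that $\po\Join\qo$, which by definition is the cardinal product $\po*\qo$ of the posets $\po$ and $\qo$, satisfies each defining condition of an abstract polytope. That it is a poset with minimum $(P_{-1},Q_{-1})$ and maximum $(P_n,Q_m)$ is immediate from the component-wise order~(\ref{orderjoin}), and the rank function is the one already displayed, $\rank(F,G)=\rank_\po(F)+\rank_\qo(G)+1$, whose values run from $-1$ to $n+m+1$. So the content is to verify that this rank function really grades the poset, that flags have the right length, and that the diamond and strong-connectivity conditions hold; the rank $n+m+1$ will then follow.

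First I would pin down the covering relations: using the proposition of Rival (or a short direct argument from the component-wise order), $(F,G)$ covers $(f,g)$ if and only if either $F$ covers $f$ in $\po$ and $G=g$, or $F=f$ and $G$ covers $g$ in $\qo$, and in either case $\rank$ increases by exactly $1$. Hence $\po\Join\qo$ is graded by $\rank$, so every maximal chain from the minimum to the maximum has $(n+m+1)+2$ elements, which is precisely the flag condition for rank $n+m+1$. For the diamond condition I would invoke the section lemma, which gives that any rank-$2$ section $(F,G)/(f,g)$ is isomorphic to $F/f\Join G/g$, and then split according to the pair $(a,b)=(\rank F-\rank f,\,\rank G-\rank g)$ with $a+b=2$: for $(a,b)=(2,0)$ or $(0,2)$ the two intermediate faces are exactly those furnished by the diamond condition in $\po$ or in $\qo$, while for $(a,b)=(1,1)$ the section is the grid $\{f,F\}\Join\{g,G\}$, whose middle faces are $(F,g)$ and $(f,G)$. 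In every case there are precisely two faces strictly between, as required.

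The main work, and the step I expect to be the real obstacle, is strong connectivity. Again by the section lemma every section of $\po\Join\qo$ is a join $A\Join B$ of sections $A=F/f$ of $\po$ and $B=G/g$ of $\qo$, each itself a polytope and hence strongly connected; and sections of rank at most $2$ are connected by definition. So it suffices to show that the proper part of a join $A\Join B$ is connected whenever $\rank(A\Join B)=p+q+1\ge 3$, where $p=\rank A$ and $q=\rank B$. If $p=-1$ or $q=-1$ then $A\Join B$ is isomorphic to $B$ or to $A$ and there is nothing to prove, so I may assume $p,q\ge 0$ and $p+q\ge 2$. Writing $\hat 0_A,\hat 1_A$ and $\hat 0_B,\hat 1_B$ for the minima and maxima, I would fix the proper face $h=(\hat 1_A,\hat 0_B)$ as a hub and connect every proper face to it by explicit zig-zags that avoid the global minimum $(\hat 0_A,\hat 0_B)$ and maximum $(\hat 1_A,\hat 1_B)$: a proper face $(x,y)$ with $x>\hat 0_A$ reaches $h$ via $(x,\hat 0_B)$, a proper face $(\hat 0_A,y)$ with $y<\hat 1_B$ reaches $h$ via $(\hat 1_A,y)$, and the only genuinely delicate case is the opposite corner $(\hat 0_A,\hat 1_B)$, which I would route through a vertex $w$ of $B$ when $q\ge 1$ (the symmetric choice of a vertex of $A$ serving when $p\ge 1$) along $(\hat 0_A,\hat 1_B)\ge(\hat 0_A,w)\le(\hat 1_A,w)\ge h$. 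The hypothesis $p+q\ge 2$ guarantees that at least one such vertex exists and that every intermediate face above is indeed proper. This proves connectivity of every section, hence the strong connectivity of $\po\Join\qo$, and completes the verification that $\po\Join\qo$ is a polytope of rank $n+m+1$.
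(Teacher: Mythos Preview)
Your proof is correct and follows essentially the same architecture as the paper's: identify the minimum and maximum, establish that every flag has $n+m+3$ elements, verify the diamond condition by the three-case split according to $(\rank F-\rank f,\rank G-\rank g)\in\{(2,0),(1,1),(0,2)\}$, and then prove strong connectivity section by section. The flag-length and diamond arguments are the same in substance; you phrase the first via covering relations and grading, the paper builds an explicit chain, but these are interchangeable.

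The one genuinely different ingredient is your treatment of strong connectivity. The paper takes two proper faces $(h,k)$ and $(H,K)$ in a section $(F,G)/(f,g)$, invokes the strong connectivity of the factor sections $F/f$ and $G/g$ to obtain zig-zags $h=H^0,\dots,H^u=H$ and $k=K^0,\dots,K^v=K$, and then interleaves them into a single zig-zag in the product. Your argument instead fixes the hub $h=(\hat 1_A,\hat 0_B)$ and routes every proper face to it by short explicit paths, handling the awkward corner $(\hat 0_A,\hat 1_B)$ via a vertex of whichever factor has rank $\ge 1$. Your approach is a bit more elementary in that it never appeals to the connectivity of the factor sections, only to the existence of vertices and the lattice bounds; the paper's approach, by contrast, makes the inductive structure (connectivity of a join from connectivity of the factors) more visible. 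Both are perfectly valid, and your case analysis for the hub argument is clean and complete.
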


\begin{proof}
First note that, if $F_{-1}$ and $G_{-1}$, and $F_{n}$ and $G_m$ are the minimal and maximal faces of $\po$ and $\qo$ respectively, then $(F_{-1}, G_{-1})$ is the minimal face of $\po \Join \qo$, while $(F_n, G_m)$ is its maximal face.

Given two elements $(F_i, G_a), (F_j, G_b) \in \po \Join \qo$ such that $(F_i, G_a) \leq_{\po \Join \qo} (F_j, G_b)$, 
there are flags $\Phi$ and $\Psi$ of $\po$ and $\qo$, respectively, such that $F_i, F_j \in \Phi$, and $G_a, G_b \in \Psi$.
It is straightforward to see
that the set
$$\{(F_i, G_a)=(\Phi_i, \Psi_a), (\Phi_{i+1}, \Psi_a), \dots, (\Phi_j, \Psi_a), (\Phi_j, \Psi_{a+1}), \dots (\Phi_j, \Psi_b)=(F_j, G_b)\}$$ (which is a subset of $\po\Join\qo$),
is a chain of the order $\po\Join\qo$ that has one element of each rank from $\rank_{\po\Join\qo}(F_i, G_a)$ to $\rank_{\po\Join\qo}(F_j, G_b)$. 
This implies that every flag of $\po\Join\qo$ has exactly $n+m+3$ elements, including $(F_{-1},G_{-1})$ and $(F_n, G_m)$.

We now turn our attention to show that $\po\Join\qo$ is strongly connected. Consider a section $ (F,G)/(f,g)$ of $\po\Join\qo$ and let $(H,K), (h,k)$ be two proper elements of $(F,G)/(f,g)$. 
Then $f\leq h,H \leq F$ and $g\leq k,K\leq G$. By the strong connectivity of $\po$ and $\qo$, there exist sequences 
$$ h=H^0, H^1, \dots, H^u=H$$ 
and 
$$k=K^0, K^1, \dots , K^v=K$$ 
of elements of $F/f$ and $G/g$, respectively, such that consecutive elements of each sequence are incident and with
$f\leq H^i\leq  F$, $g \leq K^j\leq  G$, for all $i=0, \dots u$ and $j=0, \dots v$.
Without loss of generality we may assume that $u\leq v$. 
Hence, the sequence
$$(h,k)=(H^0,K^0), (H^1,K^0), (H^1,K^1), (H^2,K^1), \dots (H^u, K^u), (H^u, K^{u+1}), \dots, (H^u, K^v)=(H,K)$$
is such that any two consecutive elements are incident and  are all proper faces of the section $ (F,G)/(f,g)$ of $\po\Join\qo$.
Hence $ (F,G)/(f,g)$ is connected and therefore $\po\Join\qo$ is strongly connected.

Finally, we show that the join product $\po\Join\qo$ satisfies the diamond condition. 
Let $(F,G),(f,g) \in \po\Join\qo$ be such that $(f,g)\leq_{\po\Join\qo} (F,G)$ and $$\rank_{\po\Join\qo}(F,G) - \rank_{\po\Join\qo}(f,g) = 2.$$ 
Then  $$(\rank_\po(F)-\rank_\po(f)) + (\rank_\qo(G)-\rank_\qo(g))=2;$$  since we have that $f\leq F$ and $g\leq G$,  the following possibilities arise:
\begin{itemize}
\item $\rank_\po(F)= \rank_\po(f)$ and $ \ \rank_\qo(G)-\rank_\qo(g)=2$; 
\item $\rank_\po(F)- \rank_\po(f) =1$ and $\rank_\qo(G)-\rank_\qo(g)=1$;
\item $\rank_\po(F) - \rank_\po(f) =2$  and $ \rank_\qo(G)=\rank_\qo(g)$. 
\end{itemize}
Note that the first and the last case are symmetric, so it suffices to consider one of them.
In the first case, 
$f=F$ and, by the diamond condition of $\qo$, there are exactly two elements $H_1, H_2$ such that $g<H_1, H_2< G$. Therefore the only elements between $(f,g)$ and $(F,G)$ are $(f,H_1)$ and $(f,H_2)$.
In the second case 
the only two faces between $(f,g)$ and $(F,G)$ are $(f,G)$ and $(F,g)$. 
Therefore $\po\Join\qo$ satisfies the diamond condition, and $\po \Join \qo$ is an abstract polytope.
\end{proof}

The join product on abstract polytopes coincides with the join of two convex polytopes. 
To see this one just needs to note that the $1$-skeleton of $\po \Join \qo$ consists of the union of the $1$-skeleton of $\po$ and the $1$-skeleton of $\qo$, together with all the edges from vertices of $\po$ to vertices of $\qo$.
The most common example of this product is a pyramid over a polygon: if $v$ is a vertex (or a $0$-polytope) and $\po$ is an $n$-gon (a $2$-polytope with $n$ vertices), then $v \Join \po$ is simply the pyramid over the $n$-gon. 
Another common example is to consider two edges $e_1$ and $e_2$ (or line segments), and the join of them: $e_1\Join e_2$ is a tetrahedron.

Suppose for a moment that we were to regard the {\em empty polytope} $\emptyset$ as an abstract polytope of rank $-1$. 
Then, the join of $\emptyset$ with an $n$-polytope $\po$ would simply be the set $\{ (\emptyset, F) \mid F \in \po\}$, and the order will be inherited by that of $\po$. 
It should be then clear that  $\emptyset \Join \po \cong \po$. 
Conversely, if $\qo$ is a $m$-polytope such that $\po \Join \qo \cong \po$ for every polytope $\po$, then $m=-1$. Therefore we shall say that the only {\em trivial polytope with respect to the join product} is the empty polytope.

If now we consider $v$ to be a $0$-polytope (that is, a vertex), then $$v \Join \po = \{ (\emptyset, F) \mid F\in\po\} \cup \{ (v, F) \mid F\in\po\}.$$
That is, the join product of $v$ with a polytope $\po$ gives us two copies of $\po$. However, the rank of an element of the type $(\emptyset, F)$ is $\rank_\po(F)$, while one of the type $(v, F)$ is $\rank_\po(F) +1$, so the two copies of $\po$ are at ``different levels''. We further note that $(\emptyset, F) \leq (v,G)$ if and only if $F\leq_\po G$ (see Figure~\ref{pyramid}).
\begin{figure}[htbp]
\begin{center}
\includegraphics[width=4.8cm]{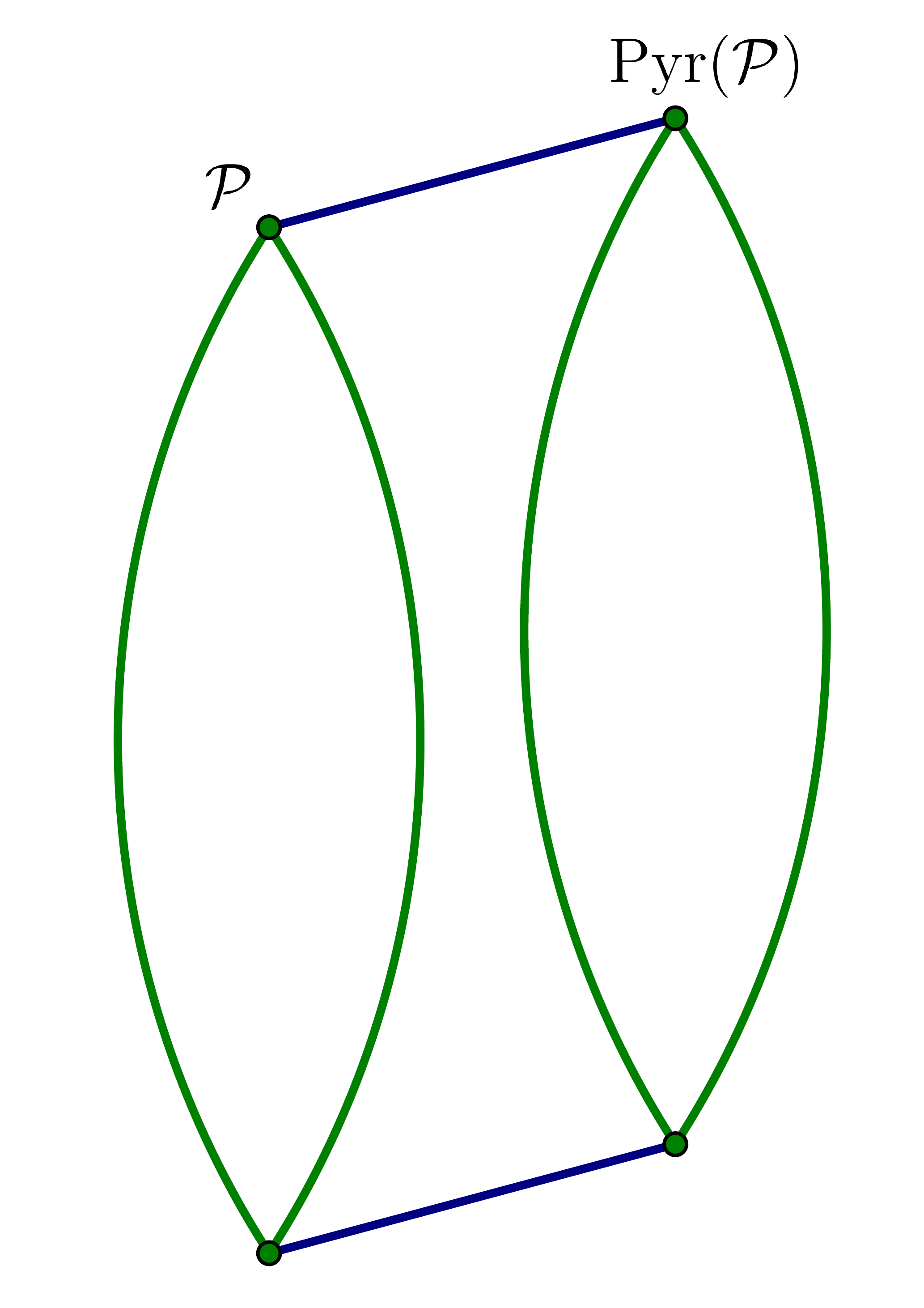}
\caption{A sketch of the Hasse diagram of a pyramid over a polytope $\po$.}
\label{pyramid}
\end{center}
\end{figure}

The $(n+1)$-polytope $v\Join\po$ is called the {\em pyramid} of $\po$ and we shall denote it by $\Pyr(\po)$. It is then straightforward to see that if $\po$ is a 2-polytope (or a polygon), then $\Pyr(\po)$ is simply the pyramid over $\po$. Furthermore, $$\Pyr(\Pyr(\dots \Pyr(v) \dots )) = \Pyr^k(v)$$ is the $(k-1)$-simplex, which is a regular polytope (see Figure~\ref{JoinPowersofaPoint}).

Note that the join product interacts nicely with the dual operation. If $\delta$ and $\omega$ are dualities from  $\po$ to $ \po^*$ and $ \qo$ to $\qo^*$, respectively, then 
$$(\delta, \omega) : \po \Join \qo \to \po^* \Join \qo^*$$  
sending $(F, G)$ to $(F\delta, G\omega)$ is a bijection between $\po \Join \qo$ and $\po^* \Join \qo^*$ such that $(F,G) \leq_{\po\Join\qo} (H,K)$ if and only if $(H\delta, K\omega) \leq_{\po^* \Join \qo^*} (F\delta, G\omega)$. That is, $(\delta, \omega)$ is a duality from $\po\Join\qo$ to its dual, implying that $$(\po \Join \qo)^* \cong \po^* \Join \qo^*.$$
In particular if both $\po$ and $\qo$ are self-dual polytopes, then so is $\po\Join\qo$.

\subsection{Cartesian product and direct sum}

The cartesian product of two abstract polytopes is the natural product when thinking on the geometry: it generalises the cartesian product of two convex polytopes. 
The direct sum can (and will) be defined in terms of the cartesian product and dual polytopes.

Given two posets $\po$ and $\qo$, with minimum faces $F_{-1}$ and $G_{-1}$, respectively, the {\em cartesian product} of $\po$ and $\qo$, denoted $\po \times \qo$, is defined as the set
\begin{eqnarray}
\label{setcart} 
\ \ \po \times \qo = \{(F,G)\in \po*\qo  \mid  \rank_\po(F), \rank_\qo(G) \geq 0 \} \cup \{(F_{-1}, G_{-1})\}, 
 \end{eqnarray}
 where the order is given by 
 \begin{eqnarray}
 \label{ordercart}
 (F, G) \leq_{\po \times \qo} (F', G') \ \mathrm{ if} \ \mathrm{ and} \  \mathrm{ only} \ \mathrm{ if}  \ F \leq_{\po} F' \ \mathrm{ and } \ G \leq_{\qo} G'.
  \end{eqnarray}

Note that the cartesian product of two polytopes $\po$ and $\qo$, as a set, is a subset of the $\po \Join \qo$, the join of $\po$ and $\qo$.
 Hence, it follows at once that $\po \times \qo$ is a poset.
The rank function on $\po \times \qo$ is defined in a different way as for the join product: given a face $(F,G)\in\po\times\qo$, with $\rank{\po}(F), \rank_\qo(G)\geq 0$, we define the rank of $(F,G)$ as, 
$$\rank_{\po \times \qo}(F,G) = \rank_{\po}(F) + \rank_\qo(G);$$ 
and we define the rank of $(F_{-1}, G_{-1})$ to be $-1$.
Hence, if $\po$ is an $n$-polytope and $\qo$ is an $m$-polytope, then $\rank_{\po \times \qo}$ is a function from $\po \times \qo$ to the set $\{-1, 0, \dots, n+m\}$. 
In contrast with the join product, we no longer consider the empty set to be a rank $-1$ polytope, as if we did, we would only have that the cartesian product of any polytope with the empty set is the empty set again. So the product is of no interest.

\begin{prop}
\label{CartesianPolytope}
Let $\po$ and $\qo$ be two polytopes of ranks $n$ and $m$, respectively. Then $\po \times \qo$  is a polytope of rank $n+m$.
\end{prop}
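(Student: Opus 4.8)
The plan is to follow the same four-step template used for the join proposition, checking in turn the extreme faces, the flag/rank condition, strong connectivity and the diamond condition, while isolating the one place where the cartesian product genuinely differs from the join, namely its bottom. First I would note that the global minimum is $(F_{-1},G_{-1})$ and the maximum is $(F_n,G_m)$, whose rank $\rank(F_n)+\rank(G_m)=n+m$ is as required. The key structural observation I would record at the outset is that the subposet of $\po\times\qo$ consisting of all faces of nonnegative rank coincides, order and incidences included, with the subposet of $\po\Join\qo$ whose two coordinates both have nonnegative rank, the only difference being that the ranks are shifted by $1$. Consequently every statement about faces and sections lying entirely in rank $\ge 0$ can be imported from the already-established join proposition.

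For the flag/rank condition I would show that the minimum $(F_{-1},G_{-1})$ is covered exactly by the rank-$0$ faces $(v,w)$: any face $(F,G)$ of rank $\ge 1$ has a vertex strictly below it, obtained by choosing vertices $v\le F$ in $\po$ and $w\le G$ in $\qo$ and taking $(v,w)$, so the minimum covers nothing of higher rank. Above rank $0$ the saturated-chain construction used for the join applies unchanged: given comparable proper faces I increment one coordinate at a time through flags of $\po$ and $\qo$, producing a chain with exactly one face of each intermediate rank. Hence every maximal chain runs through ranks $-1,0,1,\dots,n+m$ and has $n+m+2$ elements.

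Strong connectivity then splits into two families of sections. If the bottom $(f,g)$ of the section has nonnegative rank, then the section is $\{(H,K):f\le H\le F,\ g\le K\le G\}$, which is the join $F/f\Join G/g$ of sections of $\po$ and $\qo$ and hence a polytope by the join proposition, so it is connected. The remaining sections are those with bottom $(F_{-1},G_{-1})$; for these I would argue connectivity directly, which avoids any circularity: every proper face has a vertex $(v,w)$ below it, and any two vertices $(v_1,w_1),(v_2,w_2)$ below $(F,G)$ are joined by walking along the $1$-skeleton of $F$ in the first coordinate and then along that of $G$ in the second, each edge-step passing through a rank-$1$ face that is still below $(F,G)$.

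Finally, for the diamond condition, take incident faces $(f,g)<(F,G)$ with rank difference $2$. When $\rank(f,g)\ge 0$, all intermediate faces again have nonnegative rank, so the three-case analysis of the join yields exactly two of them. The genuinely new case is $(f,g)=(F_{-1},G_{-1})$ with $\rank(F,G)=1$: here $(F,G)$ is either an (edge, vertex) or a (vertex, edge) pair, the faces strictly between are exactly the rank-$0$ faces $(v,w)\le(F,G)$, and by the diamond condition of $\po$ (resp. $\qo$) an edge has exactly two vertices, giving precisely two such faces. I expect this bottom-of-the-poset bookkeeping, verifying that collapsing all mixed minimal faces to the single face $(F_{-1},G_{-1})$ still leaves exactly two vertices beneath each rank-$1$ face and keeps the lower sections connected, to be the only real obstacle, since everything of rank $\ge 0$ is inherited from the join.
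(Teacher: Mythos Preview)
Your proposal is correct and follows essentially the same approach as the paper: reduce everything above rank $0$ to the already-proved join proposition by identifying the proper part of $\po\times\qo$ with a subposet of $\po\Join\qo$, and then handle the bottom face $(F_{-1},G_{-1})$ separately for the diamond condition. The paper is terser---it simply says to adapt the join proof or use the containment in $\po\Join\qo$ for the flag and connectivity properties---whereas you spell out the $1$-skeleton walk for connectivity of sections with bottom $(F_{-1},G_{-1})$; but the strategy and the identification of the one new case (rank-$1$ faces above the minimum) are the same.
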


\begin{proof}
As pointed out above, $\po \times \qo$ is a poset. Clearly, $(F_{-1}, G_{-1})$ is its minimal face and, if $F_n$ and $G_m$ denote the maximal faces of $\po$ and $\qo$, respectively, then $(F_n, G_m)$ is the maximal face of $\po \times \qo$.

To see that all the flags of $\po \times \qo$ have the same number of elements, and that $\po \times \qo$ is strongly connected, one can simply adapt the proofs given in the previous section for $\po \Join \qo$. Alternatively, one can think of $\po \times \qo$ as a subset of $\po \Join \qo$ and use this contention to obtain the two properties.

Hence, one only needs to see that $\po \times \qo$ satisfies the diamond condition.

Let $(F,G),(f,g) \in \po \times \qo$ such that $(f,g)\leq_{\po \times \qo} (F,G)$ and $$\rank_{\po\times\qo}(F,G) - \rank_{\po\times\qo}(f,g) = 2.$$ Note that if $(f,g) \neq (F_{-1}, G_{-1})$, the result holds as it did for $\po \Join \qo$.
Hence, without loss of generality we may assume that $(f,g) = (F_{-1}, G_{-1})$. 
This immediately implies that both $F$ and $G$ are proper faces of $\po$ and $\qo$, respectively, and that $\rank_{\po\times\qo}(F,G)=1$. 
If $(H,K) \in \po\times \qo$ is such that $(F_{-1},G_{-1}) < (H,K) < (F,G)$, then $0=\rank_{\po\times\qo}(H,K) = \rank_\po(H) + rank_\qo(K)$. 
Both $H$ and $K$ are proper faces of $\po$ and $\qo$, respectively, and therefore $\rank_\po(H) = rank_\qo(K) =0$.
Since $\rank_{\po\times\qo}(F,G)=1$, then  $\rank_\po(F) + rank_\qo(G) =1$, which in turns implies that either $\rank_\po(F)=1$ and $rank_\qo(G) =0$ or $\rank_\po(F)=0$ and $rank_\qo(G) =1$.
In the first case, by the diamond condition of $\po$ we have that there exist two $0$-faces $H_1, H_2$ such that $F_{-1}<H_1, H_2 < F$. This implies that $(H,K) = (H_1, G)$ or $(H,K) = (H_2, G)$.
The second case is similar and hence the 
diamond condition is satisfied.
\end{proof}

The cartesian product of an edge with a polygon is precisely the prism over the polygon and the cartesian product of an edge with any polytope $\po$ is the prism over $\po$.

The only {\em trivial polytope with respect to  the cartesian product} is the $0$-polytope $v$. It is straightforward to see that for any polytope $\po$, $\po \times v \cong \po$, as the only $0$-face of $v$ is $v$ itself. And conversely, if $\qo$ is a polytope such that $\qo \times \po \cong \po$ for any polytope $\po$, then by considering the rank of $\qo \times \po$ one deduces that the rank of $\qo$ is zero and hence $\qo \cong v$.

One interesting example for the cartesian product is to consider a $1$-polytope $e$ (that is, an edge). Let $v_1$ and $v_2$ be the two $0$-faces of $e$. Then, given an $n$-polytope $\po$,
$$ e \times \po = \{(v_1, F) \mid F\in\po\} \cup \{(v_2, F) \mid F\in\po\} \cup \{(e,F)\mid F\in\po, rank_\po(F)\geq 0 \}.$$ In this case, $e \times \po$ has two isomrphic copies of $\po$ (at the same ``level''), and a third copy of $\po$ with the minimum removed, at one level higher (see Figure~\ref{prismP}). We note further that while $(v_i,F) \leq (e, G)$ whenever $F\leq G$, for $i=1,2$,  two faces of the type $(v_1, F)$ and $(v_2,G)$ can never be incident.
\begin{figure}[htbp]
\begin{center}
\includegraphics[width=8.5cm]{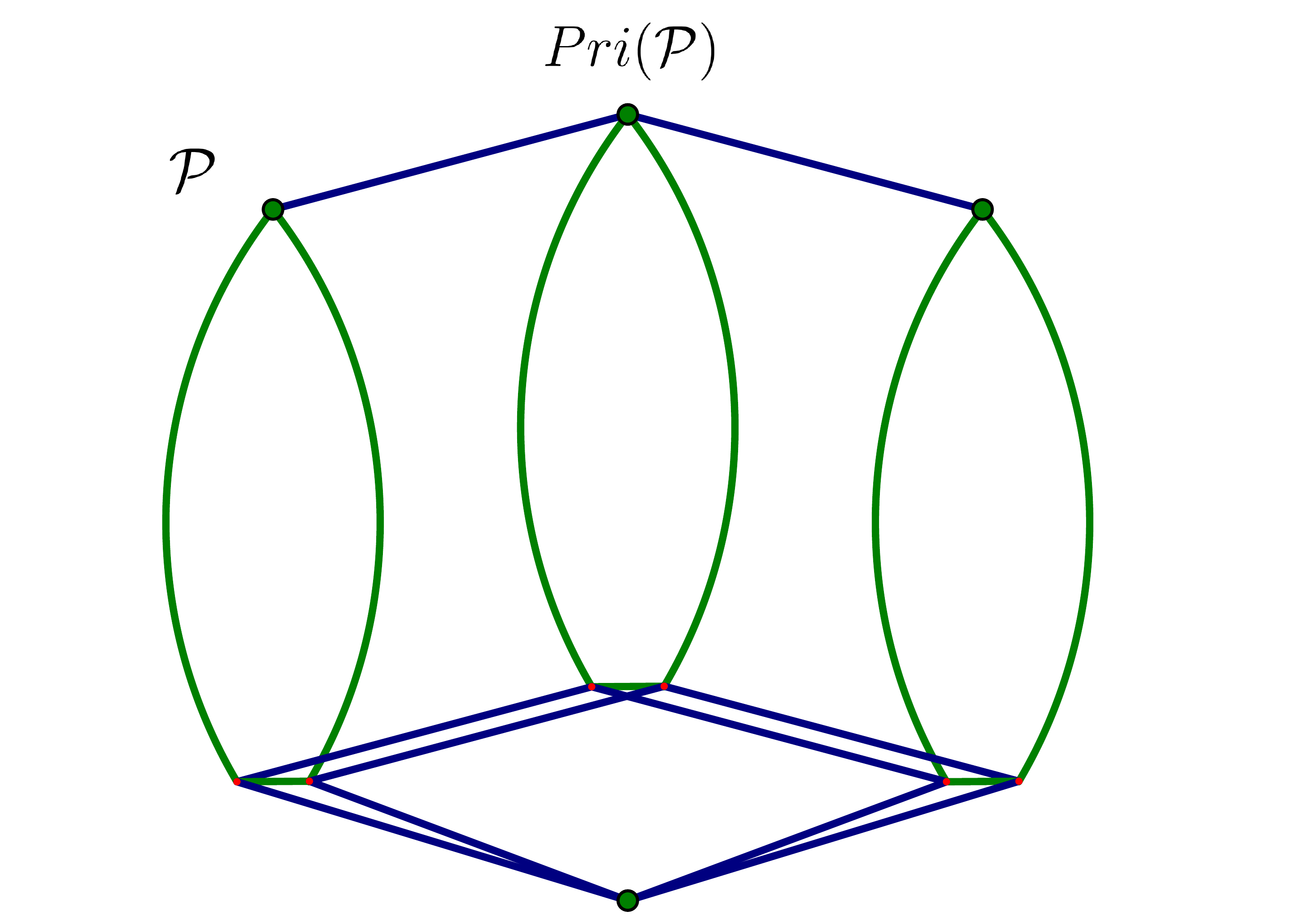}
\caption{Sketch of the Hasse diagram of a prism over a polytope $\po$.}
\label{prismP}
\end{center}
\end{figure}

The $(n+1)$ polytope $e \times \po$ is called the {\em prism} over $\po$ and shall be denoted by $Pri(\po)$. Hence, $Pri (Pri (\dots Pri(e) \dots)) = Pri^d(e)$ is the $d$-cube, which is a regular polytope.

The direct sum of a segment and a polygon is the bipyramid of the polygon. In the introduction we gave a definition of the direct sum of two convex polytopes. The direct sum of two convex polytopes can be described, using duality, in terms of a cartesian product. In fact, we have that for convex polytopes, $\po \oplus \qo := (\po^* \times \qo^*)^*,$ (see for example \cite[Lemma 2.4]{bremner}) where $\po^*$ denotes the polar dual  of $\po$ .

Hence, given two abstract polytopes $\po$ and $\qo$, we define the {\em direct sum} of $\po$ and $\qo$, denoted by $\po \oplus \qo$, simply as
$$\po \oplus \qo := (\po^* \times \qo^*)^*.$$  It is straightforward to see that if $F_n$ and $G_m$ are the maximal elements of $\po$ and $\qo$, respectively, then we have that
\begin{eqnarray}
\label{setdir}
 \po \oplus \qo = \{(F,G)\in\po*\qo  \mid   \rank_\po(F)<n \ \mathrm{and} \ \rank_\qo(G) < m \} \cup \{(F_{n}, G_{m})\},
 \end{eqnarray}
 where the order is given by 
 \begin{eqnarray}
 \label{orderdir}
 (F, G) \leq_{\po \oplus \qo} (F', G') \ \mathrm{ if} \ \mathrm{ and} \  \mathrm{ only} \ \mathrm{ if}  \ F \leq_{\po} F' \ \mathrm{ and } \ G \leq_{\qo} G'.
  \end{eqnarray}
  
  An immediate corollary of Proposition~\ref{CartesianPolytope} is the following result.
  
\begin{coro}
Let $\po$ and $\qo$ be two polytopes of ranks $n$ and $m$, respectively. Then $\po \oplus \qo$ (as defined in (\ref{setdir}) and (\ref{orderdir})) is a polytope of rank $n+m$.
\end{coro}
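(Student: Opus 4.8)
The plan is to leverage Proposition~\ref{CartesianPolytope} together with the fact that duality preserves the class of polytopes. First I would recall that the dual $\po^*$ of a polytope $\po$ of rank $n$ is again a polytope of rank $n$: each defining axiom---the existence of a least and a greatest face, the rank function, flags of length $n+2$, strong connectivity, and the diamond condition---is self-dual, since reversing the order only interchanges minima with maxima while leaving sections, chains, and the count of faces strictly between two incident faces of rank difference two unchanged. Hence $\po^*$ and $\qo^*$ are polytopes of ranks $n$ and $m$, respectively.

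Next, I would apply Proposition~\ref{CartesianPolytope} to the pair $\po^*, \qo^*$, concluding that $\po^* \times \qo^*$ is a polytope of rank $n+m$. Dualizing once more and invoking the same self-duality of the axioms, $(\po^* \times \qo^*)^*$ is again a polytope of rank $n+m$. By definition this poset is exactly $\po \oplus \qo$, which already settles the statement at the level of abstract polytopes. The structural content is thus entirely inherited from Proposition~\ref{CartesianPolytope} via two dualizations.

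What remains---and where the only genuine bookkeeping lies---is to confirm that this abstract object coincides with the explicit set and order stated in (\ref{setdir}) and (\ref{orderdir}). For this I would track how ranks transform under duality: if $\po$ has rank $n$, then $\rank_{\po^*}(F) = n - 1 - \rank_\po(F)$, so the minimum faces of $\po^*$ and $\qo^*$ are $F_n$ and $G_m$. Feeding this into the defining formula (\ref{setcart}) of the cartesian product, the condition $\rank_{\po^*}(F) \geq 0$ becomes $\rank_\po(F) < n$ (and similarly for $\qo$), so the underlying set of $\po^* \times \qo^*$ is $\{(F,G) \mid \rank_\po(F)<n,\ \rank_\qo(G)<m\} \cup \{(F_n,G_m)\}$. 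The adjoined minimum $(F_n, G_m)$ of $\po^* \times \qo^*$ becomes the adjoined maximum after the final dualization, matching (\ref{setdir}). For the order, reversing twice---once in each factor to pass to $\po^*,\qo^*$, and once again in the outer dual---returns the original componentwise order of $\po$ and $\qo$, which is precisely (\ref{orderdir}). Matching these items term by term identifies $(\po^* \times \qo^*)^*$ with the poset described by (\ref{setdir}) and (\ref{orderdir}).

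I do not anticipate any serious obstacle: the only care required is the arithmetic of rank reversal under duality to verify the explicit description, while the polytopality itself is immediate from Proposition~\ref{CartesianPolytope}.
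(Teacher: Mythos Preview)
Your proposal is correct and follows exactly the paper's intended approach: the paper states this as an immediate corollary of Proposition~\ref{CartesianPolytope} via the definition $\po \oplus \qo = (\po^* \times \qo^*)^*$, and your two dualizations with the explicit rank-reversal bookkeeping simply spell out that immediacy.
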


Similarly as above, the only  {\em trivial polytope with respect to  the cartesian product} is the $0$-polytope $v$. In fact, for any polytope $\po$, $\po \oplus v = (\po^* \times v^*)^* = (\po^*)^* \cong \po$, as $v^*$ is $v$ itself. And conversely, if $\qo$ is a polytope such that $\qo \oplus \po \cong \po$ for any polytope $\po$, then by considering the rank of $\qo \oplus \po$ one deduces that the rank of $\qo$ is zero and hence $\qo \cong v$. 

Given an $n$-polytope $\po$, the $(n+1)$-polytope $e \oplus \po$ is called the {\em bipyramid} over $\po$, and shall be deonted by $Bpy(\po)$. In this case, $Bpy ( Bpy(\dots Bpy(e)\dots))=:Byp^d(e)$ is the $d$-cross-polytope, which is a regular (convex) polytope, that is dual to the $d$-cube.

\subsection{Topological product}

The last product that we consider in this paper, does not have a convex analogue. The name is given with the following example in mind: the topological product of two polygons (homeomorphic to circles $\mathbb{S}^1$) gives us a map on the torus (the product of $\mathbb{S}^1 \times \mathbb{S}^1$). 

Given an $n$-polytope $\po$ with minimum element $F_{-1}$ and maximum element $F_n$, and an $m$-polytope $\qo$ with minimal element $G_{-1}$ and maximal element $G_m$, the {\em topological product} of $\po$ and $\qo$, denoted by $\po \square \qo$, is defined as
\begin{eqnarray}\label{settopo}
\ \ \ \ \po \square \qo = \{ (F,G) \in \po * \qo \mid  0\leq \rank_\po(F)<n,\ 0\leq\rank_\qo(G)<m\} \cup \{(F_{-1}, G_{-1}),(F_{n}, G_{m})\},
\end{eqnarray}
 where the order is given by 
 \begin{eqnarray}
 \label{ordertopo}
 (F, G) \leq_{\po \square \qo} (F', G') \ \mathrm{ if} \ \mathrm{ and} \  \mathrm{ only} \ \mathrm{ if}  \ F \leq_{\po} F' \ \mathrm{ and } \ G \leq_{\qo} G'.
  \end{eqnarray}
  
  Here, we say that the rank of the faces $(F_{-1}, G_{-1}),(F_{n}, G_{m})\in\po\square\qo$ are $-1$ and $n+m-1$, respectively, and given $(F,G)\in\po\square\qo$ with $0\leq \rank_\po(F)<n, \ 0\leq\rank_\qo(G)<m$, then $$\rank_{\po\square\qo}(F,G)=\rank_\po (F) +\rank_\qo (G).$$
  
Note that if $\po$ has rank $0$, then $\po \square \qo \cong \po$ for every polytope $\qo$. Moreover, if $\po$ has rank $1$, and $\qo$ has rank at least $1$, then $\po \square \qo$ is not connected, implying that it is not a polytope. However, using a similar proof as that of Proposition~\ref{CartesianPolytope}, we have the following proposition.

\begin{prop}
Let $\po$ and $\qo$ be two polytopes of ranks $n$ and $m$, respectively, with $n,m\geq 2$. Then $\po \square \qo$ (as defined in (\ref{settopo}) and (\ref{ordertopo})) is a polytope of rank $n+m-1$.
\end{prop}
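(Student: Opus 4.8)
The plan is to mirror the proof of Proposition~\ref{CartesianPolytope}, the one extra ingredient being that $\square$ collapses \emph{both} extremes of the product rather than only the bottom. That $\po\square\qo$ is a ranked poset with minimum $(F_{-1},G_{-1})$ and maximum $(F_n,G_m)$, whose rank ranges from $-1$ to $n+m-1$, is immediate from the definitions, so all the content lies in the three structural axioms: equal flag length, strong connectivity, and the diamond condition.

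My main organizing device would be to identify every \emph{proper} interval of $\po\square\qo$ with a product already known to be a polytope. Comparing the defining rank restrictions of $\square$, $\Join$, $\times$ and $\oplus$, one obtains: for proper faces $(f,g)\le(F,G)$, the interval $(F,G)/(f,g)\cong F/f\Join G/g$; for proper $(F,G)$, the bottom interval $(F,G)/(F_{-1},G_{-1})\cong F\times G$ (with $F=F/F_{-1}$, $G=G/G_{-1}$); and for proper $(f,g)$, the top interval $(F_n,G_m)/(f,g)\cong (F_n/f)\oplus(G_m/g)$. The last two identifications are dual to one another, exactly as $\oplus$ is itself defined as the dual of $\times$, and it is the \emph{top} interval that is the genuinely new case relative to the cartesian product. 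This is where I expect the main difficulty, and the point is precisely that it dissolves once one recognizes the top interval as a direct sum of the two co-faces. Since joins, cartesian products and direct sums are polytopes by the preceding proposition and its corollary, every proper interval of $\po\square\qo$ is in particular strongly connected.

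From here the axioms follow cleanly. For the diamond condition, observe that any interval $(F,G)/(f,g)$ with rank difference $2$ is, by the identifications above, a join, cartesian product, or direct sum of total rank span $2$, that is, a polytope of rank $1$; such a polytope has exactly two vertices, giving exactly two faces strictly between $(f,g)$ and $(F,G)$. The only interval \emph{not} covered by (one of) these identifications is the whole poset $(F_n,G_m)/(F_{-1},G_{-1})$, whose rank span is $n+m$; the hypothesis $n,m\ge2$ forces $n+m\ge4>2$, so this interval is never a rank-$2$ interval and the diamond condition is established. (The same hypothesis guarantees $n-2,m-2\ge0$, so that the co-dimension-$2$ sections of $\po$ and $\qo$ implicit in the top case genuinely exist.) Strong connectivity of the proper sections was noted above; for the whole poset I would argue connectivity directly, as in the proof for $\po\Join\qo$: given proper faces $(H,K)$ and $(H',K')$, the connectivity of $\po$ and of $\qo$ supplies sequences of \emph{proper} faces $H=H^0,\dots,H^u=H'$ and $K=K^0,\dots,K^v=K'$ with consecutive terms incident, and concatenating the path that varies the first coordinate with the one that varies the second yields a path of proper faces of $\po\square\qo$ joining $(H,K)$ to $(H',K')$.

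Finally, the equal flag length can be obtained from the covering relations: a cover between proper faces changes exactly one coordinate by a cover of $\po$ or of $\qo$, and hence raises $\rank_{\po\square\qo}$ by $1$; the minimum is covered precisely by the rank-$0$ faces $(v,w)$ (pairs of vertices), and the maximum covers precisely the rank-$(n+m-2)$ faces (pairs of facets). Showing in this way that the stated rank function grades the poset immediately forces every maximal chain to have $n+m+1$ elements. The crux of the whole argument, and the only place where $\po\square\qo$ genuinely departs from the cartesian case, is the dual treatment of the top via the direct-sum identification $(F_n,G_m)/(f,g)\cong (F_n/f)\oplus(G_m/g)$.
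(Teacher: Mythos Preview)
Your argument is correct. The paper itself does not give a proof; it simply remarks that one obtains the proposition ``using a similar proof as that of Proposition~\ref{CartesianPolytope}'', which presumably means repeating the hand verification of the diamond condition at \emph{both} extremes (bottom and top) rather than just the bottom.

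Your route is genuinely different and, in my view, cleaner. Instead of redoing the case analysis at the two extremes, you recognise once and for all that every proper section of $\po\square\qo$ is one of the three already-established products: a join $F/f\Join G/g$ in the interior, a cartesian product $F\times G$ at the bottom, and a direct sum $(F_n/f)\oplus(G_m/g)$ at the top. This last identification is the new observation and it is exactly what makes the top behave dually to the bottom, in line with the definition $\po\oplus\qo=(\po^*\times\qo^*)^*$. With these identifications in hand, the diamond condition and strong connectivity of proper sections are inherited for free from the earlier propositions, and the hypothesis $n,m\ge2$ is used precisely where it should be, to rule out the whole poset ever appearing as a rank-$2$ interval. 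The paper's implied approach would reprove these facts by hand at the top; yours reuses the work already done for $\oplus$. Both arguments are short, but yours makes the structural reason for the result more visible.
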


There are no trivial polytopes for the topological product. If $\po_1, \dots \po_d$ is a collection of $2$-polytopes, then $\square_{i=1}^d \po_i$ is a $d$-torus tessellated by $d$-cubes. In particular if every $\po_i$ is isomorphic to a $p$-gon, then $\square_{i=1}^d \po_i$  is the regular $(d+1)$-polytope $\{4,3^{d-1}\}_{(a,0,\dots, 0)}$ (see \cite{arp}).

\section{Unique factorization theorems for products of polytopes}\label{sec:fact}

The purpose of this section is to show that, for any of the four products described in the previous section, any polytope can be factored in a unique way (up to isomorphism) as the product of prime polytopes. As the proofs of this result for each of the four products are very similar, we shall first view all four products as cardinal products of posets and show some results for such products. 

We start by noticing that given a $n$-polytope $\po$ with minimum element $F_{-1}$ and maximum element $F_n$, and an $m$-polytope $\qo$ with minimal element $G_{-1}$ and maximal element $G_m$, we have that
\begin{eqnarray*}
\po \Join \qo &=& \po * \qo; \\
\po \times \qo &=& (\po \setminus \{F_{-1} \}) * (\qo \setminus \{G_{-1} \}) \cup \{(F_{-1},G_{-1})\}; \\
\po \oplus \qo &=& (\po \setminus \{F_{n} \}) * (\qo \setminus \{G_{m} \}) \cup \{(F_{n},G_{m})\}; \\
\po \ \square  \ \qo &=& (\po \setminus \{F_{-1}, F_n \}) * (\qo \setminus \{G_{-1}, G_m \}) \cup \{(F_{-1},G_{-1}), (F_n, G_m)\} .
\end{eqnarray*}

In other words,
\begin{eqnarray*}
\po \Join \qo &=& \po * \qo; \\
\po \times \qo \setminus \{(F_{-1},G_{-1})\} &=& (\po \setminus \{F_{-1} \}) * (\qo \setminus \{G_{-1} \}); \\
\po \oplus \qo \setminus \{(F_{n},G_{m})\}&=& (\po \setminus \{F_{n} \}) * (\qo \setminus \{G_{m} \}); \\
\po \ \square  \ \qo \setminus \{(F_{-1},G_{-1}), (F_n, G_m)\} &=& (\po \setminus \{F_{-1}, F_n \}) * (\qo \setminus \{G_{-1}, G_m \});
\end{eqnarray*}
which says that maybe with exception of the minimum and maximum faces, the four products of polytopes can be seen as cardinal products of posets.

Theorem~\ref{uniquefactposet} then implies that each of the four products of polytopes has a unique prime factorisation in terms of posets, however, we want to show that the prime factors are also abstract polytopes. 

The following lemma is straightforward.
\begin{lemma}
\label{factminmax}
Let $\po$ be poset with minimum element (resp. maximumm) and suppose there exist posets $\qo$ and $ \ko$  such that $\po = \qo * \ko$. Then $\qo$ has a minimum element (resp. maximum).
\end{lemma}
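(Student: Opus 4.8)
The plan is to read the conclusion directly off the componentwise description of the order on the cardinal product. Elements of $\po = \qo * \ko$ are pairs $(q,k)$ with $q \in \qo$ and $k \in \ko$, ordered by $(q,k) \leq (q',k')$ precisely when $q \leq_\qo q'$ and $k \leq_\ko k'$. Assuming $\po$ has a minimum element, I would first write it, under this identification, as $(q_0, k_0)$, and note that both $\qo$ and $\ko$ are nonempty because $\po$ is (it contains a minimum, hence at least one element, which projects onto elements of each factor).

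The key step is then to verify that $q_0$ is the minimum of $\qo$. To do this I would fix an arbitrary element $k \in \ko$ and, for an arbitrary $q \in \qo$, consider the pair $(q,k) \in \po$. Minimality of $(q_0,k_0)$ in $\po$ gives $(q_0,k_0) \leq_{\po} (q,k)$, and the componentwise order immediately yields $q_0 \leq_\qo q$. Since $q$ was arbitrary, $q_0$ is a lower bound for $\qo$ and therefore its minimum. The parenthetical maximum case is handled by the dual argument: if $(q_1,k_1)$ is the maximum of $\po$, then fixing any $k \in \ko$ and comparing $(q,k) \leq_{\po} (q_1,k_1)$ for arbitrary $q \in \qo$ forces $q \leq_\qo q_1$, so $q_1$ is the maximum of $\qo$.

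I expect no genuine obstacle here; the single point worth stating carefully is that the auxiliary element $(q,k)$ actually exists, i.e.\ that $\ko$ is nonempty, which follows at once from $\po$ being nonempty. By the symmetry of the cardinal product the same argument shows that $\ko$ inherits a minimum (resp.\ maximum) as well, although only the statement for $\qo$ is needed in what follows.
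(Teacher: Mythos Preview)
Your argument is correct and is precisely the straightforward componentwise verification the paper has in mind; the paper itself omits the proof entirely, simply declaring the lemma straightforward, so there is nothing to compare.
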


%

By the commutativity of the product, the above lemma implies that also $\ko$ has a minimum and/or maximum, whenever $\po$ has it too.

In what follows, for an $n$-polytope $\po$, $\breve{\po}$ will be denoting a polytope $\po$ without its minimum and/or maximum elements. Hence, $\breve{\po}$ satisfies the following properties.
\begin{enumerate}
\item[P1.] $\breve{\po}$  is a poset with a rank function, in which all the maximal chains have the same number of elements.
\item[P2.] $\breve{\po}$ satisfies the diamond condition for $i = 1, \dots, n-2$, and for every face of rank $n-2$ (1, resp.), there are exactly two $(n-1)$-faces ($0$-faces, resp.) incident to it.
\item[P3.] $H(\breve\po)$ is a weakly connected digraph, and every open interval of $\breve{\po}$ either has two elements or it is also connected.
\end{enumerate}

Note that if a poset $\qo$ satisfies the three above properties, we can extend $\qo$ by defining a minimum and a maximum elements of the order, and then the resulting new poset is indeed a polytope. 
In the next lemmas we shall establish that the factors of a factorable poset $\breve{\po}$ have the properties P1, P2 and P3.

\begin{lemma}
\label{factrankflag}
Let $\po$ be an $n$-polytope and suppose there exist posets $\qo$ and $ \ko$  such that $\breve\po = \qo * \ko$. Then $\qo$ (and therefore $\ko$) has a rank function. Furthermore, all the flags of $\qo$ (and therefore of $\ko$) have the same number of elements.

\end{lemma}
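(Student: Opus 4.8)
The plan is to read off the grading of the factors directly from property P1 of $\breve{\po}$, using that in a cardinal product each covering relation changes only one coordinate. By the cited result of Rival, $H(\breve{\po})\cong H(\qo)\times H(\ko)$, so $(q,k)$ is covered by $(q',k')$ in $\breve{\po}=\qo*\ko$ precisely when either $q=q'$ and $k\lessdot k'$ in $\ko$, or $k=k'$ and $q\lessdot q'$ in $\qo$. In particular, along any maximal chain of $\breve{\po}$ each step raises exactly one of the two coordinates by a single cover, and the length of a maximal chain splits as a sum of contributions from the two coordinates.

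First I would produce a rank function on $\qo$. Fix any $k_0\in\ko$ and consider the slice $\{(q,k_0)\mid q\in\qo\}$. The map $q\mapsto(q,k_0)$ is an order-embedding of $\qo$ into $\breve{\po}$, and by the description of the covers above it both preserves and reflects coverings: $(q,k_0)\lessdot(q',k_0)$ in $\breve{\po}$ if and only if $q\lessdot q'$ in $\qo$. Letting $\rho$ be the rank function of $\breve{\po}$ granted by property P1, the function $q\mapsto\rho(q,k_0)$ therefore increases by exactly $1$ along every covering relation of $\qo$, so it is a rank function for $\qo$. Since the cardinal product is commutative, interchanging the roles of $\qo$ and $\ko$ gives a rank function on $\ko$ as well.

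It remains to show that all flags of $\qo$ have the same number of elements. The key point is that the maximal chains of $\breve{\po}$ are exactly the interleavings of a maximal chain of $\qo$ with a maximal chain of $\ko$: given a maximal chain $A$ of $\qo$ (whose bottom is then a minimal element and whose top is a maximal element of $\qo$) and a maximal chain $B$ of $\ko$, any shuffle that ascends the first coordinate along $A$ and the second along $B$ is a saturated chain of $\breve{\po}$ whose endpoints $(\min A,\min B)$ and $(\max A,\max B)$ are minimal and maximal in $\breve{\po}$, and whose length equals $\ell(A)+\ell(B)$. Now fix a single maximal chain $B_0$ of $\ko$. By property P1 all maximal chains of $\breve{\po}$ have the same length $L$, so for every maximal chain $A$ of $\qo$ we obtain $\ell(A)+\ell(B_0)=L$, forcing $\ell(A)=L-\ell(B_0)$ to be independent of $A$. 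Hence all flags of $\qo$ have the same number of elements, and by the symmetry noted above the same holds for $\ko$.

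The step that needs the most care is the claim that the interleavings are precisely the maximal chains of $\breve{\po}$, and in particular that the extreme coordinates of such a chain are genuinely extremal in each factor even though $\qo$ and $\ko$ need not possess global minimum or maximum elements (as happens, for instance, when $\breve{\po}$ arises from a topological product, cf.\ Lemma~\ref{factminmax}). This maximality is exactly what allows me to pair an arbitrary $A$ with one fixed $B_0$ and then read off the constancy of $\ell(A)$ from the constancy of $L$; verifying it rigorously from the cover structure of the cardinal product is the main technical obstacle.
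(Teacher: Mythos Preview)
Your argument is correct, and the verification you flag as the ``main technical obstacle'' is in fact routine: if $A$ is a maximal chain of $\qo$ then $\min A$ is minimal in $\qo$ (otherwise $A$ extends downward), and likewise $\min B_0$ is minimal in $\ko$, so $(\min A,\min B_0)$ is minimal in $\breve\po$; dually at the top. Since each step of your interleaving is a cover by Rival's description, the shuffle is a saturated chain between a minimal and a maximal element of $\breve\po$, hence a maximal chain, and your length equation $\ell(A)+\ell(B_0)=L$ follows. With this observation your proof is complete.

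Your route genuinely differs from the paper's. Both begin by restricting the rank function of $\breve\po$ to a slice $\{(q,k_0)\mid q\in\qo\}$, but for the flag-length assertion the paper argues by contradiction rather than via shuffles: it fixes a maximal second coordinate $K$, supposes some maximal chain $\Phi$ of $\qo$ misses a rank value $c$, embeds $\tilde\Phi=\{(x,K)\mid x\in\Phi\}$ into a flag $\Psi$ of $\breve\po$, and then shows that the face of $\Psi$ at rank $c+a$ is forced to have second coordinate both equal to $K$ (because it lies between $(w,K)$ and $(z,K)$) and strictly below $K$ (because it cannot lie in $\tilde\qo$). Your shuffle argument is more conceptual and reusable---once one knows that maximal chains in a cardinal product are exactly the shuffles of maximal chains of the factors, constancy of flag length passes immediately from product to factors---whereas the paper's argument stays within the hands-on flag-and-rank bookkeeping that is used throughout Section~\ref{sec:fact}.
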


\begin{proof}
We assume that $\breve\po$ does not have a maximum and minimum elements. The arguments are similar in the case it has one of them. Hence, $\breve\po$ has a rank function with range $\{0,1, \dots, n-1\}$.
Fix $(Q,K)$ to be a maximal face of $\breve\po$, thus, $(Q,K)$ has rank $n-1$.

Consider
$\tilde\qo:= \{ (x, K) \mid x \in \qo\}.$
Then $\tilde\qo \subset \breve\po$ and $\tilde\qo \cong \qo$.

Note that the maximality of $(Q,K)$ implies that for every $x\in\tilde\qo$, we have that $\rank_\po (x,K) \leq \rank_\po (Q,K)= n-1$. Hence, $Q$ is a maximal element of $\qo$ (though most likely it is not maximum).
Moreover, as $\tilde\qo \subset \breve\po$, then there exists $q \in \qo$ such that $q<Q $ and $\rank_\po (q,K) \leq \rank_\po (x,K)$ for every $x \in \qo$. 
Thus, $q$ is a minimal (but not minimum) element of $\qo$.

Let $a:=\rank_\po (q,K)$. 
We now show that if $y$ is another minimal element of $\qo$, then $\rank_\po (y,K)=a$.
Since $(q,K)<_{\breve\po}(Q,K)$, then we can complete $\{(q,K), (Q,K)\}$ to a maximal chain $\Phi$ of $\breve\po$.
Hence, the minimal element $(q,k)$ of $\Phi$ has rank $0$ in $\breve\po$ and $k$ is a minimal element of $\ko$.
Consider the set $\Phi_<$ of all that elements of $\Phi$ that have rank less or equal to $a$. 
Since $q$ is minimal in $\qo$, then the first coordinate of all such elements is in matter of fact $q$.
Thus, the set $\Lambda$ consisting of the second coordinates of $\Phi_<$ is a maximal chain of $\ko$.

Now, let $y\in\qo$ be a minimal element (of $\qo$). 
We can complete $(y,K)$ to a maximal chain $\Psi$ of $\breve\po$ in such a way that all the elements of $\Psi$ with rank less than $b:=\rank_\po (y,K)$ are of the form $(y,x)$ with $x\in \Lambda$.
Since $\po$ is a polytope, then all the maximal chains of $\breve\po$ have $n$ elements. 
Hence, both $\Phi$ and $\Psi$ have $n$ elements.
As the number of elements in $\Phi$ of rank less than $a$ equals the number of elements in $\Psi$ of rank less than $b$, then the number of of elements in $\Phi$ of rank greater than $a$ equals the number of elements in $\Psi$ of rank greater than $b$, implying that $a=b$.

Since $\po$ is an $n$-polytope, then there exists a rank function $\rank_\po : \po \to \{-1, \dots, n\}$. 
Hence, $$\rank_\po\mid_{\tilde\qo} \ : \tilde\qo \to \{ a, \dots, n-1\}.$$
Thus, by defining for each $x \in \qo$, 
\begin{eqnarray}
\label{rankQ}
\rank_\qo (x) := \rank_\po (x,K) - a,
\end{eqnarray} we obtain a rank function from $\qo$ to the set $\{ 0, \dots, n-1-a\}$, and the first part of the lemma has been stablished.
%
%

Note now that any maximal chain of $\qo$ must have at most $n-1-a+1=n-a$ faces. 
Let $\Phi$ be a maximal chain of $\qo$, and suppose $\Phi$ has less than $n-a$ elements.
Let $y,z \in \qo$ be the minimal and maximal elements of $\Phi$, respectively.
We have shown that all minimal elements of $\qo$ have the same rank and one can similarly show that all maximal elements also have the same rank. Hence, $y,z$ have ranks zero and $n-a$, respectively.
So let $c \in \{1, \dots, n-a\}$ be such that there is no element in $\Phi$ of rank $c$ and that $c$ is minimal in that sense. Then, there exists $w \in \qo$ such that $\rank_\qo(w)=c-1$.

Let $\tilde\Phi := \{ (x, K) \mid x\in \Phi\}$. Then $\tilde\Phi$ is a maximal chain of $\tilde\qo$. Extend $\tilde\Phi$ to a flag $\Psi$ of $\breve\po$, and consider its faces $\Psi_{c+a-1}$ and $\Phi_{c+a}$ (of ranks $c+a-1$ and $c+a$, respectively). Since $\rank_\qo(w)=c-1$, then $\Psi_{c+a-1} = (w,K)$. 
As there exists no element of $\qo$ of rank $c$, there exists no element of $\tilde\qo$ of rank $c+a$, and therefore $\Phi_{c+a}$ is not an element of $\tilde\qo$. 
This implies that there exists $G \in \qo$ and $H \in \ko$ with $H < K$ such that $\Phi_{c+a} = (G,H)$. But since $\Psi$ is a flag and $(w,K), (z,K) \in \Psi$, then $(w,K) \leq (G, H) \leq (z,K)$. This immediately implies that $H=K$, which is a contradiction.

Therefore for every $c \in \{0, \dots, n-a-1\}$ there is an element of $\Phi$ of rank $c$ and thus all flags of $\qo$ have the same number of elements, namely $n-a$.
\end{proof}

\begin{lemma}
\label{factconn}
Let $\po$ be an $n$-polytope and suppose there exist posets $\qo$ and $ \ko$  such that $\breve\po = \qo * \ko$. Then $\qo$ (and therefore $\ko$) is connected and so is every  interval of it.
\end{lemma}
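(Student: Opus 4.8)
The plan is to exploit that, for any fixed element $k\in\ko$, the ``slice'' $\tilde\qo_k:=\{(x,k)\mid x\in\qo\}$ is a subposet of $\breve\po=\qo*\ko$ that is isomorphic to $\qo$ via $x\mapsto(x,k)$, because the order on $\qo*\ko$ is componentwise. Since connectivity of a poset and of each of its intervals is an isomorphism invariant, every connectivity question about $\qo$ can be transported into $\breve\po$, where it is answered using the properties P1--P3 of $\breve\po$ together with the strong connectivity of $\po$. The only delicate point is to choose $k$ appropriately (sometimes extremal in $\ko$) so that an interval of $\qo$ is carried onto a genuine interval of $\breve\po$, and by the commutativity of $*$ the same arguments apply verbatim to $\ko$.

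First I would show that $\qo$ is connected. Fix any $k\in\ko$ and take $y,y'\in\qo$. Since $\breve\po$ is connected (property P3), there is a sequence $(y,k)=(x_0,w_0),(x_1,w_1),\dots,(x_t,w_t)=(y',k)$ in $\breve\po$ in which consecutive terms are comparable. As the order on $\qo*\ko$ is componentwise, comparability of $(x_i,w_i)$ and $(x_{i+1},w_{i+1})$ forces $x_i$ and $x_{i+1}$ to be comparable in $\qo$; hence $y=x_0,x_1,\dots,x_t=y'$ is a zig-zag of pairwise comparable elements witnessing that $\qo$ is connected. (Alternatively, one may invoke \cite{rival} to write $H(\breve\po)\cong H(\qo)\times H(\ko)$ and use that a cartesian product of digraphs is weakly connected if and only if both factors are.)

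Next I would treat the bounded intervals. For a closed interval $[y_1,y_2]_\qo$, fix any $k\in\ko$; the componentwise order gives directly $[(y_1,k),(y_2,k)]_{\breve\po}=[y_1,y_2]_\qo\times\{k\}\cong[y_1,y_2]_\qo$. Since $(y_1,k)$ and $(y_2,k)$ are proper faces of $\po$, this closed interval of $\breve\po$ is exactly the section $(y_2,k)/(y_1,k)$ of $\po$, which is connected by the strong connectivity of $\po$; hence $[y_1,y_2]_\qo$ is connected. The same computation shows that a two-sided open interval $(y_1,y_2)_\qo$ is isomorphic to the open interval $((y_1,k),(y_2,k))_{\breve\po}$, which by P3 either has two elements or is connected, so the same holds for $(y_1,y_2)_\qo$.

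The remaining case, that of one-sided intervals, occurs precisely when $\qo$ lacks a minimum or a maximum (equivalently, by Lemma~\ref{factminmax}, when $\breve\po$ does), and I expect this bookkeeping to be the main technical obstacle. Suppose, say, that $\qo$ has no maximum and consider $\{x\in\qo\mid x>y\}$. Choosing $k$ to be a maximal element of $\ko$ (which exists, since by Lemma~\ref{factrankflag} $\ko$ has a rank function of finite range), the condition $w\ge k$ forces $w=k$, so that $\{H\in\breve\po\mid H>(y,k)\}=\{(x,k)\mid x>y\}$ is isomorphic to our interval; moreover, since the maximum $F_n$ of $\po$ has been removed in passing to $\breve\po$, this set is exactly the proper part of the section $F_n/(y,k)$ of $\po$, which is connected by strong connectivity. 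Bounded-below one-sided intervals are handled symmetrically by taking $k$ minimal in $\ko$. Once the correct isomorphic interval of $\breve\po$ has been identified in each case, the connectivity transfer is immediate; the work lies entirely in verifying that each choice of $k$ genuinely turns an interval of $\qo$ into an interval of $\breve\po$.
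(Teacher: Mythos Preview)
Your proof is correct and follows essentially the same route as the paper: identify each interval of $\qo$ with an interval of $\breve\po$ via the slice $x\mapsto(x,k)$ and invoke P3/strong connectivity. In fact you are more careful than the paper on one point: for one-sided intervals you explicitly require the anchor $k\in\ko$ to be extremal so that the slice really is an interval of $\breve\po$, whereas the paper simply writes ``a fixed element $M$'' and leaves this verification implicit.
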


\begin{proof}
We start by showing that $\qo$ is connected. Suppose otherwise. Then $H(\qo)$ is a disconnected digraph and hence $H(\breve\po) \cong H(\qo)*H(\ko)$ is disconnected. This in turns implies that $\breve\po$ is disconnected, which is a contradiction.

We shall now see that every interval of $\qo$ is in fact isomorphic to an interval of $\breve\po$. The proposition will follow then at once.
In fact, given $F, G \in \qo$ with $F<G$, the intervals $\{ H \in \qo \mid F< H \}$, $\{ H \in \qo \mid F> H \}$ and $\{ H \in \qo \mid F< H < G \}$ are respectively isomorphic to the intervals of $\{ (x,M) \in \breve\po \mid (F, M)< (x,M)\}$, $\{ (x,M) \in \breve\po \mid (F, M)> (x,M)\}$ and $\{ (x,M) \in \breve\po \mid (F,M)< (x,M) < (G,M) \}$, where $M$ is a fixed element of $\ko$. Thus every interval of $\qo$ is connected.
\end{proof}

 \begin{lemma}
 \label{factdiam}
Let $\po$ be an $n$-polytope and suppose there exist posets $\qo$ and $ \ko$  such that $\breve\po = \qo * \ko$. Let $m \in \mathbb{Z}$ be such that $\rank_\qo : \qo \to \{0, \dots, m\}$ is the rank function defined in~(\ref{rankQ}). Then,
\begin{enumerate}
\item[a)] If $F, G \in \qo$ are such that $F \leq G$ with $\rank_\qo(G) - \rank_\qo(F) = 2$, then there are exactly two faces $H \in \qo$ such that $F < H < G$.
\item[b)] If $F \in \qo$ is such that $\rank_\qo(F) = m-1$, then there are exactly two faces $H \in \qo$ such that $F < H $.
\item[c)] If $G \in \qo$ is such that $\rank_\qo(G) = 1$, then there are exactly two faces $H \in \qo$ such that $H < G $.
\end{enumerate}
\end{lemma}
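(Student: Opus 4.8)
The plan is to transfer the three diamond-type conditions from the polytope $\po$ (equivalently, from condition P2 for $\breve\po$) to the factor $\qo$, by realizing the relevant small intervals of $\qo$ as intervals of $\breve\po$ in which the $\ko$-coordinate is frozen. Two facts will be used throughout. First, the rank function of $\breve\po$ is \emph{additive} on the cardinal product: after the normalization of~(\ref{rankQ}) (and the analogous one for $\ko$, so that minimal elements have rank $0$) one has $\rank_{\breve\po}(x,y)=\rank_\qo(x)+\rank_\ko(y)$; this is immediate from $H(\breve\po)\cong H(\qo)*H(\ko)$, since each cover relation of the product increases exactly one coordinate's rank by one. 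Second, as in the proof of Lemma~\ref{factconn}, fixing $M\in\ko$ the map $x\mapsto(x,M)$ embeds $\qo$ into $\breve\po$ preserving order, rank, and intervals. I also record, by applying Lemma~\ref{factrankflag} to $\ko$ (valid by commutativity of $*$), that all minimal elements of $\ko$ share the rank $0$ and all maximal elements share the top rank $\ell$, with $m+\ell$ equal to the top rank of $\breve\po$.

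For part (a), take $F<G$ in $\qo$ with $\rank_\qo(G)-\rank_\qo(F)=2$ and fix any $M\in\ko$. Then $(F,M)<(G,M)$ in $\breve\po$ with rank difference $2$, and any $(x,y)$ with $(F,M)<(x,y)<(G,M)$ forces $M\le y\le M$, so $y=M$; thus the open interval between $(F,M)$ and $(G,M)$ equals $\{(H,M)\mid F<H<G\}$. By additivity its middle rank lies strictly between the extreme ranks of $\breve\po$, so the interior diamond condition of P2 applies and yields exactly two such elements, hence exactly two faces $H$ with $F<H<G$.

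For part (b), let $\rank_\qo(F)=m-1$ and choose $M$ to be a maximal element of $\ko$, so $\rank_\ko(M)=\ell$ and, by additivity, $(F,M)$ has rank one below the top of $\breve\po$. Its faces above are precisely its covers, and a cover of $(F,M)$ is either $(H,M)$ with $H$ covering $F$ in $\qo$ or $(F,M')$ with $M'$ covering $M$ in $\ko$; maximality of $M$ rules out the latter. Hence the covers of $(F,M)$ are exactly $\{(H,M)\mid H\text{ covers }F\}$, which by the facet clause of P2 number exactly two. Since $\rank_\qo(F)=m-1$, every face above $F$ covers it, so $F$ has exactly two faces above it. Part (c) is dual: for $\rank_\qo(G)=1$ choose $M$ minimal in $\ko$, so $(G,M)$ is a rank-$1$ face of $\breve\po$; minimality of $M$ forbids lowering the second coordinate, so the faces below $(G,M)$ are exactly $\{(H,M)\mid H\text{ is covered by }G\}$, and the vertex clause of P2 gives exactly two.

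The main obstacle — in fact the only delicate point — is the bookkeeping that makes each count an exact equality rather than an inequality: one must guarantee that the two faces produced by P2 inside $\breve\po$ correspond bijectively to faces of $\qo$, with no spurious element carrying a different $\ko$-coordinate. This is exactly why the $\ko$-coordinate is frozen by choosing $M$ extreme (maximal for the ``above'' count in (b), minimal for the ``below'' count in (c)), while for (a) the freezing is automatic from the interval argument. Establishing rank additivity cleanly, and confirming via Lemma~\ref{factrankflag} that extreme elements $M$ of the correct rank exist, are the supporting steps that make the three counts go through; the remaining verifications are routine.
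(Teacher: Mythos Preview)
Your proof is correct and follows essentially the same approach as the paper: embed the relevant interval of $\qo$ into $\breve\po$ via $x\mapsto(x,M)$, then invoke the diamond-type condition of $\po$ (P2), with $M$ arbitrary for part~(a) and $M$ chosen maximal (resp.\ minimal) in $\ko$ for part~(b) (resp.~(c)) so that the second coordinate is frozen. The paper's proof is terser and leaves parts~(b) and~(c) as ``similar fashion watching for the details,'' whereas you have spelled out precisely the bookkeeping (rank additivity, why the extreme choice of $M$ forbids spurious covers in the $\ko$-direction) that justifies the exact count.
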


\begin{proof}
We start by showing part $a)$. Let $M$ be an element of $\ko$. Then, $(F,M) \leq (G,M)$ and $\rank_\po(G,M) - \rank_\po(F,M) = 2$. By the diamond condition of $\po$ there exist exactly two elements $x \in \breve\po$ such that $(F,K) < x < (G,K)$. By the definition of the cardinal product $\qo*\ko$, the second coordinate of $x$ must be $K$. Hence, part $a)$ of the proposition follows.
Parts $b)$ and $c)$ follow in a similar fashion watching for the details. In fact for part $b)$, we must set $K=M$ a maximal element of $\ko$, while for part $c)$, $K=m$ a minimal element of $\ko$.
\end{proof}

Using Lemmas~\ref{factminmax},~\ref{factrankflag},~\ref{factconn} and~\ref{factdiam} we can now establish the following theorem.

\begin{theorem}
\label{factpoly}
Let $\po$ be an $n$-polytope with minimum element $F_{-1}$ and maximum element $F_n$ and  $\qo$ and $\ko$ be posets. Then we have the following.
\begin{enumerate}
\item If $\po = \qo * \ko$, then both $\qo$ and $\ko$ are polytopes.
\item If $ \po \setminus \{F_{-1}\} = \qo * \ko$, then both $\qo$ and $\ko$ have a maximum element and satisfy properties P1, P2 and P3.
\item If $ \po \setminus \{F_{n}\} = \qo * \ko$, then both $\qo$ and $\ko$ have a minimum element and satisfy properties P1, P2 and P3.
\item If $ \po \setminus \{F_{-1}, F_n\} = \qo  *  \ko$, then both $\qo$ and $\ko$ satisfy properties P1, P2 and P3.
\end{enumerate}
\end{theorem}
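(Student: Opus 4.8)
The plan is to derive all four parts directly from the four preceding lemmas, since these were engineered to supply exactly the defining properties of (truncated) polytopes. Concretely, Lemma~\ref{factrankflag} equips each factor with a rank function and shows that all of its flags have the same length, which is property P1; Lemma~\ref{factconn} shows that each factor and every one of its intervals is connected, which is property P3; and Lemma~\ref{factdiam}, in its three parts, supplies the diamond condition for interior pairs, for top faces, and for bottom faces, which together are property P2. Thus in every one of the four cases the factors $\qo$ and $\ko$ satisfy P1, P2 and P3, and the only remaining work is to identify, case by case, which of a minimum and a maximum each factor inherits and to check that this matches what is asserted.

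For part~(1) the poset $\po$ has both $F_{-1}$ and $F_n$, so Lemma~\ref{factminmax}, applied once to the minimum and once (by commutativity of $*$) to the maximum, gives each of $\qo$ and $\ko$ a minimum and a maximum element. A poset satisfying P1, P2 and P3 that already carries both a minimum and a maximum is a polytope by the remark following the statement of those properties, so $\qo$ and $\ko$ are polytopes. For parts~(2) and (3) I would apply Lemma~\ref{factminmax} only to the surviving extreme: in part~(2) the poset $\po\setminus\{F_{-1}\}$ retains the maximum $F_n$, so each factor inherits a maximum but not a minimum, while in part~(3) the dual situation gives a minimum but not a maximum; in part~(4) both extremes have been deleted and neither factor inherits a minimum or a maximum. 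In each case the lemmas cited above then certify P1, P2 and P3 for the factors, which is precisely the claim.

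The step that requires genuine care — and the one I would write out most carefully — is the interaction between the diamond condition and the extremes that happen to be present. Parts~(b) and (c) of Lemma~\ref{factdiam}, read literally, assert two faces above each corank-one face and two faces below each rank-one face; these are the correct statements exactly when the relevant extreme has been removed (so that those faces are genuinely on the boundary of $\breve\po$), and they would be false for a factor that still has its maximum (resp.\ minimum). Hence for part~(1) I must \emph{not} invoke parts~(b) and (c): with both extremes present, the boundary instances of the diamond condition, namely the pairs (minimum, edge) and (ridge, maximum), are simply rank-difference-two pairs and are covered by part~(a) alone. For parts~(2) and (3) one uses part~(a) throughout together with part~(c) (resp.\ part~(b)) to produce the two faces guaranteeing the diamond condition once the missing minimum (resp.\ maximum) is re-adjoined, and for part~(4) all three parts are needed. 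Underlying this is a bookkeeping point I would make explicit: the rank function of Lemma~\ref{factrankflag} normalises the minimal elements of a factor to rank $0$, whereas a genuine polytope seats its minimum at rank $-1$, so the appropriate shift must be tracked to line the boundary conditions of P2 up with the adjoined minimum and/or maximum. I expect this reconciliation of ranks and of which diamond clauses apply, rather than any deep argument, to be the main obstacle; once it is settled, the theorem is essentially a transcription of Lemmas~\ref{factminmax}, \ref{factrankflag}, \ref{factconn} and~\ref{factdiam}.
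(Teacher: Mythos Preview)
Your proposal is correct and follows essentially the same approach as the paper, which simply asserts that the theorem is established ``using Lemmas~\ref{factminmax}, \ref{factrankflag}, \ref{factconn} and~\ref{factdiam}'' without giving any further detail. Your write-up is in fact more careful than the paper's: the distinction you draw about which clauses of Lemma~\ref{factdiam} apply in each of the four cases, and the rank-normalisation bookkeeping, are exactly the points that need to be checked but that the paper leaves implicit.
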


\begin{coro}
Let $\po$ be an abstract polytope and let $\odot$ denote a product of polytopes (either the join, cartesian or topological product, or the direct sum). Then $\po$ can be uniquely factorised as a $\odot$-product polytopes that are prime with respect to the product $\odot$.
\end{coro}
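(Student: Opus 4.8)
The plan is to reduce the statement to Hashimoto's unique factorisation theorem (Theorem~\ref{uniquefactposet}) through the dictionary between the four products of polytopes and the cardinal product of posets recorded in the displayed identities above. Fixing a product $\odot$ and a polytope $\po$ with minimum $F_{-1}$ and maximum $F_n$, I would write $\breve\po$ for $\po$ itself when $\odot=\Join$, for $\po\setminus\{F_{-1}\}$ when $\odot=\times$, for $\po\setminus\{F_n\}$ when $\odot=\oplus$, and for $\po\setminus\{F_{-1},F_n\}$ when $\odot=\square$. Those identities say precisely that $\breve{(\po\odot\qo)}=\breve\po*\breve\qo$, so applying $\breve{(\,\cdot\,)}$ turns a $\odot$-factorisation of a polytope into a $*$-factorisation of a poset, and conversely a $*$-factorisation reassembles into a $\odot$-factorisation by reinstating the deleted extreme face(s). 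In each case $\breve\po$ is connected: for $\odot\in\{\Join,\times,\oplus\}$ it retains a minimum or a maximum, and for $\odot=\square$ its weak connectivity is exactly property P3; hence Theorem~\ref{uniquefactposet} applies to $\breve\po$.

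For existence I would factor $\breve\po=S_1*\cdots*S_t$ into $*$-primes by Theorem~\ref{uniquefactposet}. Reading this as an iterated binary product and applying the relevant clause of Theorem~\ref{factpoly} repeatedly (with Lemma~\ref{factminmax} tracking the surviving minimum/maximum), every partial product, and in particular every $S_i$, carries the extreme element(s) appropriate to $\odot$ and satisfies P1, P2 and P3. By the remark following P1--P3, adjoining to $S_i$ the missing extreme face(s) produces a polytope $\qo_i$ with $\breve{\qo_i}=S_i$. Applying $\breve{(\,\cdot\,)}$ to $\qo_1\odot\cdots\odot\qo_t$ yields $\breve{\qo_1}*\cdots*\breve{\qo_t}=S_1*\cdots*S_t=\breve\po$; since $\qo_1\odot\cdots\odot\qo_t$ and $\po$ are polytopes with the same reduct, differing only in their adjoined extreme faces, they are isomorphic, giving $\po\cong\qo_1\odot\cdots\odot\qo_t$.

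Next I would establish the primality dictionary and deduce uniqueness. A polytope $\qo$ should be $\odot$-prime if and only if $\breve\qo$ is $*$-prime: a nontrivial $\odot$-factorisation $\qo=\mathcal{A}\odot\mathcal{B}$ gives the nontrivial $*$-factorisation $\breve\qo=\breve{\mathcal A}*\breve{\mathcal B}$, while any nontrivial $*$-factorisation of $\breve\qo$ reassembles into a nontrivial $\odot$-factorisation; the point to check is that a factor is $\odot$-trivial (the empty polytope for $\Join$, the $0$-polytope for $\times$ and $\oplus$, and---using Lemma~\ref{factconn} to exclude antichains---nonexistent for $\square$) exactly when its reduct is the one-element $*$-identity. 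In particular the $\qo_i$ above are $\odot$-prime. Finally, given two $\odot$-prime factorisations $\po\cong\qo_1\odot\cdots\odot\qo_t\cong\qo_1'\odot\cdots\odot\qo_{t'}'$, applying $\breve{(\,\cdot\,)}$ produces two $*$-prime factorisations of $\breve\po$, so Theorem~\ref{uniquefactposet} forces the multisets of reducts to coincide up to isomorphism; since each $\qo_i$ is recovered from $\breve{\qo_i}$ by adjoining the same extreme faces, the two $\odot$-factorisations agree up to isomorphism and reordering.

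I expect the delicate point to be this last primality dictionary: matching $\odot$-triviality with the $*$-identity uniformly across the four products, and in the topological case confirming that each $*$-prime reduct really lifts to a genuine polytope of rank at least two, so that no spurious trivial factor is introduced---which is exactly where the connectivity input of Lemma~\ref{factconn} is required.
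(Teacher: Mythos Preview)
Your proposal is correct and follows exactly the route the paper intends: the paper's own proof is the single sentence ``The corollary follows from Theorems~\ref{uniquefactposet} and~\ref{factpoly},'' and what you have written is a faithful unpacking of that sentence---pass to the reduct $\breve\po$, invoke Hashimoto for existence and uniqueness of a $*$-prime factorisation, and use Theorem~\ref{factpoly} (with the remark after P1--P3) to translate back and forth between $*$-factors and $\odot$-factors. Your identification of the one genuinely delicate step (the primality dictionary, and in particular that for $\odot=\square$ every nontrivial $*$-prime factor lifts to a polytope of rank at least two) is accurate, and your indicated resolution via Lemmas~\ref{factconn} and~\ref{factdiam} is the right one.
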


\begin{proof}
The corollary follows from Theorems~\ref{uniquefactposet} and~\ref{factpoly}.
\end{proof}

\section{The flags of a product}\label{sec:flags}

In Sections~\ref{sec:auto} and~\ref{sec:mono} we shall deal with the groups and orbits of products. To study these groups it shall prove very helpful to have a better understanding of the structure of the flags of a product. That is the 
purpose of this section.

We start by analyzing the join product.
Let $\po$ be an $(n-1)$-polytope and
suppose that $\po = \qo_1 \Join \qo_2 \Join \dots \Join \qo_r$, for some polytopes $\qo_1\dots\qo_r$ such that $\qo_i$ has rank $n_i-1$. 
This implies that $$n=n_1+ n_2 + \dots + n_r.$$
Without loss of generality we may assume that $n_i\geq 1$.

Let $\Phi$ be a flag of $\po$. 
Then $\Phi = \{\Phi_{-1}, \Phi_0, \Phi_1, \dots, \Phi_{n-1}\}$, where $\Phi_i$ has rank $i$. 
Since $\po$ is a product, then for each $i\in\{-1, \dots n-1\}$ there exist $F^j_i \in \qo_j$ such that $$\Phi_i = (F_i^1, F_i^2, \dots, F_i^r).$$
By definition of the join product, we have that for each $j = 1,2, \dots, r$, 
$$F^j_{-1}\leq F^j_0 \leq F^j_1 \leq \dots \leq F^j_{n-1},$$
where $F^j_{-1}$ and $F^j_{n-1}$ are the minimum and maximum elements, respectively, of $\qo_j$.
Note that many of the $F^j_i$ are repeated in the above sequence, as otherwise $\po$ would be just a trivial product.
That means that the set $\{F^j_{-1}, F^j_0 , F^j_1 , \dots , F^j_{n-1}\}$ has cardinality $n_i+1$ and, after erasing the repeated faces, it is a flag of $\qo_j$.
Call $\Psi^{(j)}$ such flag.

Now, since  $\Phi = \{\Phi_{-1}, \Phi_0, \Phi_1, \dots, \Phi_{n-1}\}$ is a flag of $\po$, then 
$$\rank(\Phi_{i}) - \rank(\Phi_{i-1}) = 1,$$ 
for each $i$.
Again, by the definition of the join product, for each $i$, $\Phi_{i-1} = (F_{i-1}^1, F_{i-1}^2, \dots, F_{i-1}^r)$ and $\Phi_i = (F_i^1, F_i^2, \dots, F_i^r)$   differ in exactly one entry.
Denote by $a^{(\Phi)}_i$ such entry.
In other words, $a^{(\Phi)}_i=j \in \{1, \dots, r\}$ if and only $\Phi_{i-1}$ and $\Phi_i$ differ in their $j$ entry.
Thus, we can naturally identify each flag $\Phi$ of $\po$ with the ordered pair $(\{\Psi^{(1)}, \Psi^{(2)}, \dots, \Psi^{(r)}\}, a)$, where each $\Psi^{(i)}$ is the flag of $\qo_i$ described above and $a=\{a^{(\Phi)}_0, a^{(\Phi)}_1, \dots, a^{(\Phi)}_{n-1}\}$. 
Clearly, two different flags of $\po$ define different ordered pairs.

Note further that for each flag $\Phi \in \fl(\po)$, the sequence $a^{(\Phi)}_0, a^{(\Phi)}_1, \dots, a^{(\Phi)}_{n-1}$ has exactly $n_j$ times the integer $j$, for each $j\in \{1,\dots, r\}$.

Let $\mathcal A$ be the set of all ordered $n$-tuples $a=\{a_0,a_1, \dots, a_{n-1}\}$ with $a_j\in \{1, \dots, r\}$ and such that each $j \in \{1, \dots, r\}$ appears exactly $n_j$ times in $a$.
Given an ordered pair $(\{ \Psi^{(1)}, \dots, \Psi^{(r)}\}, a)$, where $\Psi^{(j)}$ is a flag of $\qo_j$ and $a \in {\mathcal A}$, we can define the flag $\Phi = \{\Phi_{-1}, \Phi_0, \dots, \Phi_{n-1}\}$ of $\po$ as follows. 
The minimum face of $\Phi$, $\Phi_{-1}$ is the $r$-tuple $(F_{-1}^1, F_{-1}^2, \dots F_{-1}^r)$, where each $F_{-1}^j$ is the minimum face of the polytope $\qo_j$. 
Suppose that we have defined the $(i-1)$-face $\Phi_{i-1}=(F_{i-1}^1, F_{i-1}^2, \dots, F_{i-1}^r)$ of $\Phi$, in such a way that $F_{i-1}^j$ is a face of the flag $\Psi^{(j)}$ of $\qo_j$. 
Hence, for each $j \in \{1,\dots, r\}$, the face $F_{i-1}^j$ of $\qo_j$ has some rank, say $t_j$, with $-1\leq t_j \leq n_j$.
The $i$-face $\Phi_{i}$ is the $r$-tuple that coincides with $\Phi_{i-1}$ in all its entries, except in the entry $a_i$. 
The entry $a_i$ of $\Phi_i$ is the face of the flag $\Psi^{(a_i)}$ of rank $t_j +1$. 
Hence, in particular, $\Phi_0$ has all its entries equal the minimum face of the corresponding polytope (all of rank $-1$), except for its $a_0$ entry, which is the $0$-face of the flag $\Psi^{(a_0)}$.

If we now take one of the other three products, the analysis of the flags is very similar. The main differences are in the way the set $\mathcal A$ should be defined for each product and, thus, in the way to construct a flag of the product, given one flag of each factor and an element of $\mathcal A$. 
Alternatively, one can keep $\mathcal A$ fixed and adjust the ranks of the polytopes $\qo_i$ as well as the definition of the vertices or the facets of the product, depending on the product we are dealing with. 
Using similar methods as the ones explained above one can show the following lemma. 

\begin{lemma}
\label{flagbijection}

Let $\qo_1,\qo_2, \dots, \qo_r$ be polytopes and $\odot$ denote one of the four products discussed in Section~\ref{sec:allproducts} (that is, $\odot \in \{ \Join, \times, \oplus, \square\}$).
Let $\po=\qo_1 \odot \qo_2 \odot \dots \odot \qo_r$ and let $\fl$ denote the  set $\fl(\qo_1) \times \fl(\qo_2) \times \dots \times \fl(\qo_r)$.
Then there exists a bijection $\varphi_\po$ between 
$
\fl(\po) $ and $ \fl\times{\mathcal A}$,
where, $\mathcal A$ is the set of ordered $n$-tuples with entries in the set
$ \{1, \dots, r\}$ and such that each $j \in \{1, \dots, r\}$ appears exactly $n_j$ times in $a$, 
$n=n_1+n_2+\dots+n_r$,
and $n_j$ is related to the 
 rank of the polytope $\qo_j$ in the following way: 
\begin{eqnarray*}
\rank \qo_j = \left\{ \begin{array}{ll}
n_j-1 &\mathrm{if} \ \odot=\Join;\\
n_j&\mathrm{if} \ \odot=\times, \oplus;\\
n_j+1 &\mathrm{if} \ \odot=\square.
\end{array}\right.
\end{eqnarray*}
\end{lemma}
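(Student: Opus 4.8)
The plan is to establish the bijection $\varphi_\po$ by induction on $r$, the number of factors, reducing everything to the two-factor case. Since all four products are associative and commutative (as shown in Section~\ref{sec:allproducts}), it suffices to understand how the flags of $\po \odot \qo$ decompose in terms of the flags of $\po$ and $\qo$ together with a single ``shuffle'' recording the rank-increment pattern; the general case then follows by iterating and by noting that the set $\mathcal A$ of $n$-tuples partitions naturally into choices of where each factor's rank-increments occur. The detailed analysis already carried out in the text for the join product $\qo_1 \Join \dots \Join \qo_r$ gives the template: a flag $\Phi$ of $\po$ is reconstructed from the induced flags $\Psi^{(j)} \in \fl(\qo_j)$ together with the word $a = (a_0, \dots, a_{n-1}) \in \mathcal A$ that dictates, at each rank step, which coordinate increases.

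First I would verify the map is well-defined in both directions for each of the four products. Going from $\fl(\po)$ to $\fl \times \mathcal A$: given a flag $\Phi = \{\Phi_{-1}, \dots, \Phi_{N}\}$ of $\po$ (where $N = \rank(\po)$), project each $\Phi_i$ onto its $j$-th coordinate, discard repetitions, and observe that the surviving faces form a flag $\Psi^{(j)}$ of $\qo_j$; the word $a$ records which coordinate changes at each step. The key is that consecutive faces $\Phi_{i-1} < \Phi_i$ differ in exactly one coordinate, which is forced by $\rank(\Phi_i) - \rank(\Phi_{i-1}) = 1$ together with the additivity of the rank function for each product (established in Section~\ref{sec:allproducts}). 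Going back, given $(\{\Psi^{(j)}\}, a)$ I would build $\Phi$ step by step exactly as in the join-product construction, incrementing the $a_i$-th coordinate along its flag $\Psi^{(a_i)}$ at step $i$; the count condition defining $\mathcal A$ (each $j$ appears exactly $n_j$ times) guarantees that each coordinate runs through its entire flag, so $\Phi$ terminates at the maximum face and is genuinely a flag. These two constructions are mutually inverse by inspection, giving the bijection.

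The main obstacle, and the only real content beyond the join case, is handling the minimum and maximum faces correctly for the cartesian product, direct sum, and topological product, where the extreme faces are singletons rather than honest coordinate tuples. For $\times$, the rank-$(-1)$ face is the lone $(F_{-1}, G_{-1})$ and the two factors' ranks are offset so that $\rank \qo_j = n_j$ rather than $n_j - 1$; concretely, the first increment step simultaneously leaves the glued minimum face, so one coordinate jumps from rank $-1$ directly into rank $0$ and the bookkeeping of $a$ must absorb this. The analogous issue occurs at the top for $\oplus$ and at both ends for $\square$ (where $\rank \qo_j = n_j + 1$ and two glued faces are removed). I would treat these uniformly by recalling the identities from Section~\ref{sec:fact}, namely that each product equals a cardinal product of the truncated posets $\breve{\qo_j}$ with glued extreme faces, so the interior of every flag is a flag in the honest cardinal product $\prod \breve{\qo_j}$ and the shuffle analysis applies verbatim there; the glued faces contribute no choice and merely shift the rank offset, which is precisely what produces the three cases for $\rank \qo_j$ in the statement. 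The verification that $|a| = n = \sum n_j$ in each case is then a direct consequence of the rank formula $N = \sum \rank(\qo_j) + c$ with $c$ as recorded in Theorem B, and I would confirm it by counting increment steps in a single maximal chain.
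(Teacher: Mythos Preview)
Your proposal is correct and follows essentially the same approach as the paper: the paper does not give a formal proof of this lemma but rather works out the join case in detail in the preceding paragraphs (projecting a flag to its coordinate flags $\Psi^{(j)}$ and recording the shuffle word $a$, then reconstructing in the reverse direction) and then states that ``using similar methods as the ones explained above one can show the following lemma,'' noting only that for the other three products one must either redefine $\mathcal A$ or, equivalently, shift the ranks and adjust the definition of vertices/facets. Your treatment of the glued extreme faces via the identities $\po\odot\qo\setminus\{\text{extremes}\}=\breve{\po}*\breve{\qo}$ from Section~\ref{sec:fact} makes explicit exactly what the paper leaves to the reader, and your rank-offset bookkeeping matches the three cases in the statement; the inductive framing on $r$ is unnecessary (both you and the paper in fact work directly with $r$ factors), but it does no harm.
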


\section{Automorphism groups of products}\label{sec:auto}

In this section we turn our attention to the automorphism group of a product of polytopes. Throughout the section, $\odot$ will denote one of the four products discussed in Section~\ref{sec:allproducts} (i.e., $\odot \in \{ \Join, \times, \oplus, \square\})$, and we shall refer to the $\odot$-product simply as the product. Likewise, a prime polytope will be a prime polytope with respect to $\odot$.

Although $\odot$ cannot always be seen as a cardinal product of posets, we note that the automorphism group of a polytope $\po$ coincides with the automorphism group of $\po$ taking away the minimum or  maximum elements or both. Hence, for proposes of computing the automorphism group of a product $\odot$ of polytopes, without loss of generality we may assume that $\odot$ is in fact the cardinal product of posets. 

We shall say that two polytopes $\po$ and $\qo$ are {\em relatively prime} if their (unique) prime factorization does not have any prime polytopes in common. In particular, if both $\po$ and $\qo$ are different prime polytopes, then they are relatively prime. 

In~\cite{duffus} (Corollary 2), Duffus shows that every automorphism $\gamma$ of a product $\po * \qo$ of relatively prime posets is the product of an automorphism $\gamma_\po$ of $\po$ times an automorphism $\gamma_\qo$ of $\qo$. From this fact, we obtain the following proposition.

\begin{prop}
\label{relprime}
Let $\po$ and $\qo$ be two relatively prime polytopes. Then, $\G(\po \odot \qo) \cong \G(\po) \times \G(\qo)$.
\end{prop}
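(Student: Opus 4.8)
The plan is to reduce the statement about polytopes to the cited result of Duffus about cardinal products of posets, handling the one technical subtlety introduced by the extremal faces. As noted in the paragraph preceding the proposition, the automorphism group of a polytope equals the automorphism group of the poset obtained by deleting its minimum and/or maximum faces (an order-preserving bijection of the truncated poset extends uniquely to one fixing the improper faces, since those are the unique source and sink of the Hasse diagram). Using the explicit descriptions from Section~\ref{sec:fact}, each of the four products $\po\odot\qo$, after removing the relevant extremal faces, becomes literally a cardinal product of posets: $\po\Join\qo=\po*\qo$, and in the other three cases $\po\odot\qo$ minus its adjoined extremal face(s) equals $(\po\setminus\{\cdot\})*(\qo\setminus\{\cdot\})$. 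So we may work entirely with cardinal products.

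Next I would verify that the relative primeness hypothesis transfers correctly to these truncated posets. Here is where care is needed: $\po$ and $\qo$ are assumed relatively prime \emph{as polytopes} (their prime polytope factorizations share no factor), but Duffus's theorem needs the two posets $\po\setminus\{\cdot\}$ and $\qo\setminus\{\cdot\}$ to be relatively prime \emph{as posets}. By Theorem~\ref{factpoly} the prime poset factors of each truncated polytope are themselves (truncated) polytopes, so a common poset prime factor would yield a common polytope prime factor, contradicting relative primeness; thus the truncated posets are relatively prime and Duffus applies. I would state this reduction as the central lemma of the proof.

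With the hypotheses in place, Duffus's Corollary~2 gives that every automorphism $\gamma$ of the truncated cardinal product factors as $\gamma=(\gamma_1,\gamma_2)$ with $\gamma_1$ an automorphism of the first factor and $\gamma_2$ of the second, and conversely any such pair defines an automorphism of the product. This yields a bijection, and one checks directly that it respects composition (coordinatewise composition matches composition of the factored automorphisms), so it is a group isomorphism
$$\G\big((\po\setminus\{\cdot\})*(\qo\setminus\{\cdot\})\big)\cong\G(\po\setminus\{\cdot\})\times\G(\qo\setminus\{\cdot\}).$$
Finally I would transport this back through the extension-to-extremal-faces identification on all three terms, obtaining $\G(\po\odot\qo)\cong\G(\po)\times\G(\qo)$.

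The main obstacle is the second paragraph: making rigorous that polytope-relative-primeness implies poset-relative-primeness of the truncations. This relies essentially on Theorem~\ref{factpoly}, which guarantees that poset prime factors of a truncated polytope are again truncated polytopes, so that the two notions of prime factorization coincide and no spurious common poset factor can appear. Everything else—the extension across improper faces and the verification that Duffus's bijection is a homomorphism—is routine.
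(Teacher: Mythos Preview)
Your proposal is correct and follows essentially the same approach as the paper: reduce to cardinal products of posets by removing the appropriate extremal faces (as the paper notes in the paragraph preceding the proposition), then invoke Duffus's Corollary~2. You are in fact more careful than the paper, which leaves implicit the point that polytope-relative-primeness transfers to poset-relative-primeness of the truncations; your use of Theorem~\ref{factpoly} to justify this step is exactly right and fills a small gap the paper glosses over.
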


Corollary 2 of~\cite{duffus} also states that if $\qo$ is a prime poset and $m\in \mathbb{N}$, then for any automorphism $\gamma$ of $\po:=\Pi_{i=1}^n \qo$ there exist a permutation $\sigma$ of the set $\{1, \dots, m\}$ and automorphisms $\gamma_1, \dots, \gamma_m$ of $\qo$ such that for every $F=\{F_1, \dots, F_m\} \in \qo$, and every $i \in \{0, \dots, m\}$, the $i$-th coordinate of the element $F\gamma$ is precisely $F_{i\sigma}\gamma_i$ (where $i\sigma$ is precisely the image of $i$ under the permutation $\sigma$). That is, $F\gamma = (F_{1\sigma}\gamma_1, F_{2\sigma}\gamma_2, \dots, F_{m\sigma}\gamma_m)$. 
Moreover, it is clear that an element $\alpha= (\alpha_1, \alpha_2, \dots, \alpha_m) \in \G(\qo) \times \G(\qo) \times \dots \times \G(\qo)$ acts naturally on the elements of $\po$. Namely, $F\alpha = (F_1\alpha_1, \dots F_m\alpha_m)$. From these two facts, one can show the following proposition.

\begin{prop}
\label{power}
Let $\qo$ be a prime polytope and let $\po:= \Pi_{i=1}^m \qo$. Then $\G(\po) \cong \Pi_{i=1}^m \G(\qo) \rtimes S_m$.
\end{prop}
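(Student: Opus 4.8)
The plan is to prove Proposition~\ref{power} by exhibiting the semidirect product structure explicitly, using the structural description of automorphisms of a power of a prime poset that was quoted from~\cite{duffus} in the paragraph immediately preceding the statement. That description already tells us that every $\gamma \in \G(\po)$ is determined by a pair $(\sigma, (\gamma_1, \dots, \gamma_m))$ with $\sigma \in S_m$ and each $\gamma_i \in \G(\qo)$, acting by $F\gamma = (F_{1\sigma}\gamma_1, \dots, F_{m\sigma}\gamma_m)$. So the work is really to organize this into a short-exact-sequence / semidirect-product argument rather than to discover anything new.

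First I would define the two natural subgroups of $\G(\po)$ that are the candidate factors. Let $N$ be the subgroup of automorphisms with $\sigma = \mathrm{id}$, i.e.\ those of the form $F\alpha = (F_1\alpha_1, \dots, F_m\alpha_m)$; the second fact quoted before the statement guarantees every such tuple $(\alpha_1,\dots,\alpha_m)$ genuinely acts on $\po$, so $N \cong \Pi_{i=1}^m \G(\qo)$, and I would check directly that $N$ is a subgroup (closed under composition and inverse, which is just componentwise composition in $\G(\qo)$). Let $T$ be the subgroup of ``pure coordinate permutations'', $F\tau_\sigma = (F_{1\sigma}, \dots, F_{m\sigma})$ for $\sigma \in S_m$; again one checks $\tau_\sigma$ preserves the componentwise order on the cardinal product $\po = \qo^m$ (it does, since it just permutes identical factors) and that $\sigma \mapsto \tau_\sigma$ is an injective homomorphism $S_m \hookrightarrow \G(\po)$, so $T \cong S_m$.

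Next I would verify the three defining conditions of an internal semidirect product $\G(\po) = N \rtimes T$. (i) \emph{Generation:} the Duffus description says every $\gamma$ factors as $F \mapsto (F_{1\sigma}\gamma_1, \dots)$, which is exactly the composite $\tau_\sigma$ followed by the element of $N$ given by $(\gamma_1,\dots,\gamma_m)$, so $\G(\po) = N T$. (ii) \emph{Triviality of intersection:} if an automorphism lies in both $N$ and $T$ then its underlying permutation is both $\mathrm{id}$ and $\sigma$, and its componentwise action is both trivial and $(\gamma_1,\dots,\gamma_m)$; matching the two forms forces $\sigma = \mathrm{id}$ and every $\gamma_i = \mathrm{id}$, so $N \cap T = \{1\}$. (iii) \emph{Normality of $N$:} I would compute the conjugate $\tau_\sigma^{-1}\,(\alpha_1,\dots,\alpha_m)\,\tau_\sigma$ and show it equals the element of $N$ with reindexed components $(\alpha_{\sigma(1)}, \dots)$ (the precise permutation of indices to be read off from the composition law), which is again in $N$; this simultaneously identifies the action of $T$ on $N$ as the permutation action of $S_m$ on the $m$ factors, giving the claimed semidirect product $\Pi_{i=1}^m \G(\qo) \rtimes S_m$.

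The main obstacle I anticipate is purely bookkeeping rather than conceptual: getting the composition law right, in particular tracking whether $\sigma$ acts on indices on the left or the right and hence whether the conjugation formula produces $\alpha_{i\sigma}$ or $\alpha_{i\sigma^{-1}}$. Because the paper writes permutations acting on the right ($F_{i\sigma}$) and also composes automorphisms on the right, I would be careful to compute $F(\gamma\gamma')$ by substituting one description into the other and reading off both the resulting permutation (it should be $\sigma\sigma'$ or $\sigma'\sigma$ depending on convention) and the resulting component automorphisms, fixing the convention once at the start so that items (i)--(iii) all use it consistently. Everything else—that $N$ and $T$ are subgroups, and that each named map is in fact an automorphism of the poset—follows immediately from the fact that $\po$ is literally the cardinal product $\qo^m$ with componentwise order, so I would not belabor those verifications.
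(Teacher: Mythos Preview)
Your approach is correct and is precisely the argument the paper has in mind: it does not spell out a proof at all, simply stating that the result follows from the two facts quoted from~\cite{duffus} (the description of an arbitrary automorphism of $\qo^m$ via a pair $(\sigma,(\gamma_1,\dots,\gamma_m))$, and the natural componentwise action of $\G(\qo)^m$), and your internal-semidirect-product verification is exactly how one cashes those facts out. The only point to be careful about is the one you already flagged---the right-action conventions when checking normality and the resulting $S_m$-action on the factors.
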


By denoting $\Pi_{i=1}^m \qo$ simply by $\qo^m$ and $\Pi_{i=1}^m \G(\qo)$ as $\G(\qo)^m$, from Propositions~\ref{relprime} and~\ref{power}, we obtain the following corollary, which settles part $c)$ of Theorem A.

\begin{coro}
If $\po =  \qo_1^{m_1} \odot \qo_2^{m_2}\odot \dots \odot \qo_r^{m_r}$, where the $\qo_i$ are distinct prime polytope, then
$$ \Aut(\po) = \Pi_{i=1}^r (\Aut(\qo_i)^{m_i}\rtimes S_{m_i}).$$
\end{coro}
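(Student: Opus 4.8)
The plan is to combine the two propositions stated just above (Propositions~\ref{relprime} and~\ref{power}) together with the general principle, invoked at the start of Section~\ref{sec:auto}, that the automorphism group of a product $\odot$ of polytopes agrees with the automorphism group of the corresponding cardinal product of posets (obtained by deleting the extremal faces). This reduction lets us work entirely with cardinal products $*$ of posets, where the two cited results apply verbatim.

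The first step is to group the prime factors of $\po$ into their isotypic blocks: write $\po = \qo_1^{m_1} \odot \dots \odot \qo_r^{m_r}$, where the $\qo_i$ are pairwise non-isomorphic prime polytopes. Since distinct primes are \emph{relatively prime} in the sense defined above, the blocks $\qo_i^{m_i}$ are pairwise relatively prime. I would then apply Proposition~\ref{relprime} inductively across the $r$ blocks to obtain
\begin{eqnarray*}
\G(\po) \cong \G(\qo_1^{m_1}) \times \G(\qo_2^{m_2}) \times \dots \times \G(\qo_r^{m_r}).
\end{eqnarray*}
Strictly speaking Proposition~\ref{relprime} is stated for two factors, so the induction step requires observing that a product of the first $i$ blocks is again relatively prime to the $(i+1)$-st block, which is immediate because their prime factorizations share no prime polytope.

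The second step is to evaluate each factor $\G(\qo_i^{m_i})$ using Proposition~\ref{power}, which gives $\G(\qo_i^{m_i}) \cong \G(\qo_i)^{m_i} \rtimes S_{m_i}$ since each $\qo_i$ is prime. Substituting these into the direct product from the first step yields exactly
\begin{eqnarray*}
\G(\po) \cong \Pi_{i=1}^r \big(\G(\qo_i)^{m_i} \rtimes S_{m_i}\big),
\end{eqnarray*}
which is the desired identity (recalling $\Aut = \G$). This completes the argument.

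The main obstacle, such as it is, lies not in either cited proposition but in the inductive application of relative primality: one must be careful that Proposition~\ref{relprime} was proven only for a \emph{pair} of relatively prime polytopes, whereas here it is iterated over $r$ blocks, so the splitting $\G(\po) \cong \G(\qo_1^{m_1}) \times \G(\text{rest})$ must be justified at each stage by checking that $\qo_1^{m_1}$ remains relatively prime to the product of the remaining blocks. Since the unique prime factorization theorem (Theorem~\ref{factpoly} and its corollary) guarantees that the primes appearing in distinct blocks are genuinely distinct, this check is routine, and no delicate compatibility between the direct-product and semidirect-product structures needs to be verified separately.
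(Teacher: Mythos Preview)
Your proposal is correct and matches the paper's approach exactly: the paper states this corollary as an immediate consequence of Propositions~\ref{relprime} and~\ref{power} without further argument, and your write-up simply spells out the obvious two-step deduction (split into relatively prime isotypic blocks, then apply the wreath-product description to each block). The only addition you make is the explicit remark about iterating Proposition~\ref{relprime} across $r$ blocks, which is a harmless elaboration of what the paper leaves implicit.
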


The main interest in the study of abstract polytopes has been the highly symmetric ones, being the regular ones the most studied ones in the last 30 years (see for example \cite{arp}). As one naturally expects, the product of two regular polytopes in general is not a regular polytope anymore. In fact, we shall see that with exception of one family per product, regular polytopes are prime.


Although different products are described in a slightly different way, we can study them all under the same scope, so we let $\odot \in \{\Join, \times, \oplus, \square\}$.
By Lemma~\ref{flagbijection}, there is a bijection between the flags of $\po =  \qo_1 \odot \qo_2 \odot \dots \odot \qo_r$ and $\fl \times  {\mathcal A}$, where $\fl = \fl(\qo_1) \times \fl(\qo_2) \times \dots \times \fl(\qo_r)$ and $\mathcal A$ is 
 the set of all sequences $a_1, \dots a_n$ such that $a_i \in \{1, \dots, r\}$ and that for each $j \in \{1, \dots, r\}$ the integer $j$ appears exactly $n_j$ times in the sequence,
 $n_j$ stands for the rank of $\qo_j$ plus or minus $1$, depending which  product of polytopes we would want to consider and $n-1=n_1+\dots n_r-1$ is the rank of $\po$.

Note that the cardinality of ${\mathcal A}$ is $|{\mathcal A}| = {n \choose n_1}{ {n-n_1} \choose n_2} \dots {n_r \choose n_r}$, and let us denote $\fl \times {\mathcal A}$ by ${\mathcal B}(\po)$.


Suppose now that $r=2$ (that is, $\po = \qo_1 * \qo_2$) and that the posets $\qo_1$ and $\qo_2$ are relatively primes. Let $\gamma \in \G(\po)$. Then, $\gamma = (\gamma_1, \gamma_2)$, where $\gamma_j \in \G(\qo_j)$, and the action of $\gamma$ on an element $(\{ \Phi_1, \Phi_2\}, a)$, of ${\mathcal B}(\po)$ is given by
$$(\{ \Phi_1, \Phi_2\}, a)\gamma = (\{\Phi_1\gamma_1, \Phi_2\gamma_2\}, a).$$

Note that given $a, a' \in {\mathcal A}$ with $a \neq a'$, there is no element of $\G(\po)$ that can send an element of ${\mathcal B}(\po)$ with second coordinate $a$ to one with second coordinate $a'$. Furthermore, in order to have $(\{ \Phi_1, \Phi_2\}, a)$ and $(\{ \Psi_1, \Psi_2\}, a)$ in the same orbit, we need $\Phi_j$ and $\Psi_j$ in the same orbit under $\G(\qo_j)$. 
If for each integer $n_i$, we let
\begin{eqnarray}\label{not:ranks}
n_i^\odot = \left\{ \begin{array}{ll}
n_i-1 &\mathrm{if} \ \odot=\Join;\\
n_i &\mathrm{if} \ \odot=\times, \oplus;\\
n_i+1 &\mathrm{if} \ \odot=\square,\\
\end{array}\right.
\end{eqnarray}
then we have  established the following lemma. 

\begin{lemma}
\label{orbitstworelatprime}
Let $\po$ be an $(n-1)$-polytope, and suppose $\po = \qo_1 \odot \qo_2$ with $\qo_1$ and $\qo_2$ relatively prime with respect to $\odot$. 
Let $n_i^\odot$  (as in (\ref{not:ranks})) denote the rank of $\qo_i$ , and let $k_i$  denote the number of orbits of $\G(\qo_i)$ on $\fl(\qo_i)$. Then the number of orbits of $\fl(\po)$ under the action of $\G(\po)$ is
$k_1k_2{{n_1+n_2} \choose n_2}$.
\end{lemma}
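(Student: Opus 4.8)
The plan is to combine the flag bijection of Lemma~\ref{flagbijection} with the splitting of $\G(\po)$ coming from relative primality, and then reduce the orbit count to a standard product-action computation. First I would invoke Lemma~\ref{flagbijection} with $r=2$ to identify $\fl(\po)$ with $\mathcal{B}(\po) = (\fl(\qo_1) \times \fl(\qo_2)) \times {\mathcal A}$, where $\mathcal A$ is the set of length-$n$ sequences over $\{1,2\}$ in which $1$ appears $n_1$ times and $2$ appears $n_2$ times (here $n = n_1 + n_2$ and the $n_i$ are the shifted ranks $n_i^\odot$ adjusted as in~(\ref{not:ranks})); consequently $|{\mathcal A}| = \binom{n_1+n_2}{n_2}$. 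Since $\qo_1$ and $\qo_2$ are relatively prime, Proposition~\ref{relprime} gives $\G(\po) \cong \G(\qo_1) \times \G(\qo_2)$, and, as recorded in the discussion preceding this lemma, an element $\gamma = (\gamma_1,\gamma_2)$ acts by $(\{\Phi_1,\Phi_2\},a)\gamma = (\{\Phi_1\gamma_1, \Phi_2\gamma_2\}, a)$.

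The crucial structural observation is that this action leaves the coordinate $a \in {\mathcal A}$ fixed. Hence every orbit of $\G(\po)$ is contained in a single slice $(\fl(\qo_1)\times\fl(\qo_2)) \times \{a\}$, and distinct values of $a$ contribute disjoint families of orbits. It therefore suffices to count the orbits inside one slice and multiply by $|{\mathcal A}|$.

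Within a fixed slice, the action is precisely the componentwise action of $\G(\qo_1)\times\G(\qo_2)$ on $\fl(\qo_1)\times\fl(\qo_2)$. I would then apply the elementary fact that the number of orbits of a product group acting componentwise on a product set equals the product of the numbers of orbits on each factor; this yields $k_1 k_2$ orbits per slice. Multiplying by $|{\mathcal A}| = \binom{n_1+n_2}{n_2}$ gives the claimed total $k_1 k_2 \binom{n_1+n_2}{n_2}$.

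I do not anticipate a genuine obstacle, since the heavy lifting is carried out by Lemma~\ref{flagbijection} and Proposition~\ref{relprime}. The only point needing mild care is to verify that the two facts recorded before the lemma, namely that the $a$-coordinate is preserved and that two elements sharing the same $a$ are equivalent if and only if their factor-flags are equivalent under the respective $\G(\qo_j)$, together give both necessary and sufficient conditions for two elements of $\mathcal{B}(\po)$ to lie in the same orbit; this is exactly what makes the orbit set split as a product across the three coordinates and legitimizes the multiplicative count.
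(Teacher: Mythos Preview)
Your proposal is correct and matches the paper's argument essentially line for line: the paper, in the paragraph immediately preceding the lemma, uses the bijection of Lemma~\ref{flagbijection}, invokes the splitting $\gamma=(\gamma_1,\gamma_2)$ from Proposition~\ref{relprime}, observes that the $a$-coordinate is fixed so distinct $a$'s give distinct orbits, and that within a slice two elements are equivalent iff the factor flags are, yielding $k_1k_2\binom{n_1+n_2}{n_2}$. The only cosmetic difference is that you make explicit the elementary product-action orbit count that the paper leaves implicit.
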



By induction we can now obtain the following corollary

\begin{coro}
\label{coro:orbitsrelatprimes}
Let $\po = \qo_1\odot\qo_2,\odot \dots,\odot \qo_r$ with $\qo_i$ and $\qo_j$ relatively prime for every $i, j \in \{1, \dots, r\}$. 
Let $n_i^\odot$ (as in (\ref{not:ranks})) be the rank of $\qo_i$ and $k_i$  denote the number of orbits of $\G(\qo_i)$ on $\fl(\qo_i)$. 
Then the number of orbits of $\fl(\po)$ under the action of $\G(\po)$ is
$$k_1k_2\dots k_r \frac{(n_1+n_2+\dots+n_r)!}{n_1!n_2!\dots n_r!}.$$
\end{coro}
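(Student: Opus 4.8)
The plan is to prove Corollary~\ref{coro:orbitsrelatprimes} by induction on $r$, using Lemma~\ref{orbitstworelatprime} as the base case ($r=2$) and the key structural fact that when all factors are pairwise relatively prime, the automorphism group splits completely as a direct product. First I would recall that by Proposition~\ref{relprime}, applied iteratively, $\G(\po) \cong \G(\qo_1) \times \G(\qo_2) \times \dots \times \G(\qo_r)$ when the $\qo_i$ are pairwise relatively prime, since grouping $\qo_1$ against $\qo_2 \odot \dots \odot \qo_r$ preserves relative primality (the factorization of the latter is just the union of the factorizations of $\qo_2, \dots, \qo_r$, which shares no prime with that of $\qo_1$). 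This means every $\gamma \in \G(\po)$ acts as $(\gamma_1, \dots, \gamma_r)$ with $\gamma_i \in \G(\qo_i)$, acting on an element $(\{\Phi_1, \dots, \Phi_r\}, a) \in {\mathcal B}(\po)$ coordinatewise on the flag part while fixing the second coordinate $a \in {\mathcal A}$.

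The inductive step would proceed as follows. Assume the formula holds for products of fewer than $r$ relatively prime factors, and write $\po = \qo_1 \odot \qo'$ where $\qo' = \qo_2 \odot \dots \odot \qo_r$. By Lemma~\ref{orbitstworelatprime}, the number of orbits of $\fl(\po)$ is controlled by the orbit counts of the two factors together with the binomial factor recording the interleaving of their ranks. The subtlety is that the interleaving data $a \in {\mathcal A}$ for the full product is richer than a single binomial coefficient suggests: the sequence $a$ records, at each of the $n = n_1 + \dots + n_r$ steps, \emph{which} of the $r$ factors the rank increases in. My approach is therefore to count orbits directly rather than relying on a naive two-factor recursion. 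Since $\G(\po)$ fixes the second coordinate $a$ and acts orbit-by-orbit on each flag coordinate $\Phi_i$ independently, two elements $(\{\Phi_i\}, a)$ and $(\{\Psi_i\}, b)$ lie in the same $\G(\po)$-orbit precisely when $a = b$ and each $\Phi_i, \Psi_i$ lie in the same $\G(\qo_i)$-orbit. Consequently the number of orbits is exactly $|{\mathcal A}|$ times the product $k_1 k_2 \cdots k_r$ of the per-factor orbit counts.

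It then remains to evaluate $|{\mathcal A}|$, the number of sequences of length $n$ over $\{1,\dots,r\}$ in which each symbol $j$ occurs exactly $n_j$ times. This is the multinomial coefficient
\begin{equation*}
|{\mathcal A}| = \binom{n}{n_1, n_2, \dots, n_r} = \frac{(n_1 + n_2 + \dots + n_r)!}{n_1! \, n_2! \, \cdots \, n_r!},
\end{equation*}
which is precisely the claimed factor. Combining $|{\mathcal A}|$ with $k_1 k_2 \cdots k_r$ yields the formula. The main obstacle, and the step requiring the most care, is verifying that the complete splitting $\G(\po) \cong \prod_i \G(\qo_i)$ genuinely holds for $r > 2$ factors and that no residual symmetry permutes distinct factors — this is guaranteed by Proposition~\ref{relprime} precisely because relatively prime polytopes share no prime factor, so Duffus's theorem forbids any automorphism from mixing coordinates. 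Once this complete splitting is secured, the orbit count is a clean product and the multinomial identity finishes the argument; the induction is really just bookkeeping that repackages this direct count, so I would likely present the direct-count version as the cleanest route.
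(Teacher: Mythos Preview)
Your proposal is correct and follows essentially the same approach as the paper, which simply states ``By induction we can now obtain the following corollary'' from Lemma~\ref{orbitstworelatprime}. Your direct count via the full splitting $\G(\po)\cong\prod_i\G(\qo_i)$ and the multinomial $|{\mathcal A}|$ is just the unrolled form of that induction, and you correctly identify the one point needing care---that $\qo_1$ and $\qo_2\odot\dots\odot\qo_r$ remain relatively prime so Proposition~\ref{relprime} applies at each stage.
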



We now turn our attention to the case when $\po=\qo\odot\dots\odot\qo = \qo^m$, where $\qo$ is a prime polytope with respect to $\odot$ and  $m$ is a natural number. 
In this case,  the action of $\G(\po)=\G(\qo)^m \rtimes S_m$ on the elements of ${\mathcal B}(\po)$ is given by:
$$(\{ \Phi_1, \dots, \Phi_m\}, (a_1,a_2,\dots, a_n))\gamma = (\{ \Phi_{1\sigma}\gamma_1, \dots, \Phi_{m\sigma}\gamma_m\}, (a_1\sigma^{-1}, a_2\sigma^{-1},\dots, a_n\sigma^{-1}).$$

We have seen that $|{\mathcal A}| = {n \choose n_1}{ {n-n_1} \choose n_2} \dots {n_m \choose n_m}$. In this case this means that, if $N^\odot$ is the rank of $\qo$, then $n=MN$ and
$$|{\mathcal A}| = {n \choose N}{ {n-N} \choose N} \dots {N \choose N} = {Nm \choose N}{ {N(m-1)} \choose N} \dots {N \choose N} = \frac{(Nm)!}{(N!)^m}.$$

Note that each $a \in {\mathcal A}$ can be sent by an element of $\G(\po)$ to $m!$ elements. Indeed, each element of $S_m$ acts on the second coordinate of the elements of $\mathcal B(\po)$, and the only element of $S_n$ that fixes a given $a\in{\mathcal A}$ is the identity. 
Moreover, only the elements of $S_m$ can permute the second coordinates of the elements of $\mathcal B$. 
This implies that the action of $S_m$ on $\mathcal A$ has 
$\frac{(Nm)!}{(N!)^m m!}$ orbits. 
In particular we note that this number is always an integer. 
By now taking into consideration the number of orbits of $\fl(\qo)$ under the action of $\G(\qo)$, we have the following lemma.

\begin{lemma}
\label{lemma:orbitspowers}
Let $\po = \qo\odot \qo \odot \dots \odot\qo=\qo^m$ for some prime polytope $\qo$ with respect to $\odot$. Let $N^\odot$ denote the rank of $\qo$,  and $k$  denote the number of orbits of $\G(\qo)$ on $\fl(\qo)$. Then the number of orbits of $\fl(\po)$ under the action of $\G(\po)$ is
$$k^m \frac{(Nm)!}{(N!)^m m!}.$$
\end{lemma}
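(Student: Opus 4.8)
The plan is to count the orbits of $\G(\po)=\G(\qo)^m\rtimes S_m$ on $\mathcal B(\po)=\fl(\qo)^m\times\mathcal A$ by first splitting according to the second coordinate. First I would record the two facts already assembled above: the subgroup $\G(\qo)^m$ fixes every $a\in\mathcal A$, while $S_m$ acts on $\mathcal A$ with trivial stabilizers (since $N=N^\odot\geq 1$ forces each label in $\{1,\dots,m\}$ to occur in $a$, so only the identity of $S_m$ can fix $a$). Consequently the $S_m$-orbits on $\mathcal A$ all have size $m!$, and there are exactly $|\mathcal A|/m!=\frac{(Nm)!}{(N!)^m\,m!}$ of them. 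Since no element of $\G(\po)$ can move a point of $\mathcal B(\po)$ to one whose second coordinate lies in a different $S_m$-orbit, the $\G(\po)$-orbits are partitioned according to these $S_m$-orbits, and it suffices to count, for one fixed $S_m$-orbit, how many $\G(\po)$-orbits meet its fiber.

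Next I would fix a representative $a$ of such an $S_m$-orbit and argue that every $\G(\po)$-orbit meeting the fiber over this $S_m$-orbit already meets the smaller set $\fl(\qo)^m\times\{a\}$: given any point whose second coordinate lies in the orbit of $a$, a suitable $\sigma\in S_m$ (appearing as $(\mathrm{id},\sigma)\in\G(\po)$) carries it to a point with second coordinate exactly $a$. It then remains to identify the equivalence induced on $\fl(\qo)^m\times\{a\}$. If $g=(\gamma_1,\dots,\gamma_m;\sigma)$ takes $(\Phi_1,\dots,\Phi_m;a)$ to $(\Psi_1,\dots,\Psi_m;a)$, the second coordinate forces $a\sigma^{-1}=a$, hence $\sigma=\mathrm{id}$ by freeness; the action then reads $\Phi_i\gamma_i=\Psi_i$ for each $i$. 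Thus two tuples in this fiber are $\G(\po)$-equivalent if and only if they are componentwise $\G(\qo)$-equivalent, and the number of orbits meeting the fiber equals the number of orbits of $\G(\qo)^m$ on $\fl(\qo)^m$, namely $k^m$.

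Finally I would multiply: each of the $\frac{(Nm)!}{(N!)^m\,m!}$ many $S_m$-orbits of $\mathcal A$ contributes $k^m$ orbits of $\G(\po)$, giving the claimed total $k^m\,\frac{(Nm)!}{(N!)^m\,m!}$. The routine identity $|\mathcal A|=(Nm)!/(N!)^m$ has already been verified above, so that step need only be cited.

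The step I expect to be the main obstacle is the reduction in the second paragraph: one must check carefully that the $S_m$-permutation acting simultaneously on the flag-tuple and (inversely) on $a$ does not create identifications between flag-tuples beyond the componentwise $\G(\qo)$-action, once the second coordinate is pinned to $a$. This is exactly where the freeness of the $S_m$-action on $\mathcal A$ is essential: it collapses $\sigma$ to the identity and decouples the problem into the $m$ independent flag coordinates. For infinite $\qo$ the naive division $|\fl(\po)|/|\G(\po)|$ is unavailable, so this orbit-by-orbit bookkeeping, rather than a cardinality count, is the argument that actually goes through in general.
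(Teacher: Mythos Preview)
Your argument is correct and follows essentially the same route as the paper: the paper observes that $S_m$ acts freely on $\mathcal A$, counts $|\mathcal A|/m!=\frac{(Nm)!}{(N!)^m m!}$ orbits there, and then multiplies by the $k^m$ flag-orbits. Your proof is in fact more explicit than the paper's at the point you flagged as the main obstacle---the paper simply says ``by now taking into consideration the number of orbits of $\fl(\qo)$ under the action of $\G(\qo)$'' without spelling out that freeness forces $\sigma=\mathrm{id}$ and hence decouples the coordinates---so your careful reduction to componentwise $\G(\qo)$-equivalence fills a step the paper leaves to the reader.
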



We are now ready to compute the number of flag orbits of a product.

\begin{prop}
\label{orbitsposet}
Let $\po=  \qo_1^{m_1} \odot \qo_2^{m_2}\odot \dots \odot \qo_r^{m_r}$, where the $\qo_i$ are distinct prime posets with respect to $\odot$. 
Let $n_i^\odot$ be the rank of  $\qo_i$ and let $k_i$  denote the number of orbits of $\G(\qo_i)$ on $\fl(\qo_i)$.  Then the number of orbits of $\fl(\po)$ under the action of $\G(\po)$ is
$$
\Pi_{i=1}^r k_i^{m_i} \frac{(\sum_{i=1}^r m_in_i)!}{\Pi_{i=1}^r (n_i!)^{m_i} m_i!},
$$
\end{prop}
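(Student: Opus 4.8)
The plan is to combine the two special cases that have already been established—Corollary~\ref{coro:orbitsrelatprimes} for products of pairwise relatively prime factors, and Lemma~\ref{lemma:orbitspowers} for powers of a single prime—via the direct product decomposition of the automorphism group. The starting point is the identification of $\fl(\po)$ with $\mathcal B(\po)=\fl\times\mathcal A$ from Lemma~\ref{flagbijection}, together with the group structure $\G(\po)\cong\Pi_{i=1}^r(\G(\qo_i)^{m_i}\rtimes S_{m_i})$ from the corollary following Proposition~\ref{power}. The key structural observation is that the whole product $\po$ is relatively prime as a collection of the $r$ powers $\qo_1^{m_1},\dots,\qo_r^{m_r}$, since the $\qo_i$ are distinct primes; hence by Proposition~\ref{relprime} (applied inductively) the automorphism group splits as a direct product across the $r$ blocks, and each factor $\G(\qo_i)^{m_i}\rtimes S_{m_i}$ acts only on the coordinates belonging to its own block.

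First I would set $N_i:=n_i$ (the adjusted rank $n_i^\odot$ of $\qo_i$) and let $n=\sum_{i=1}^r m_iN_i$ be the rank-plus-one of $\po$. The set $\mathcal A$ consists of sequences of length $n$ in which the symbol for each of the $\sum m_i$ individual factors appears the prescribed number of times; I would first count the coarse distributions, namely the $\binom{n}{m_1N_1,\,m_2N_2,\,\dots,\,m_rN_r}=\dfrac{n!}{\Pi_{i=1}^r(m_iN_i)!}$ ways of deciding which positions belong to which block $\qo_i^{m_i}$. Within a fixed block assignment, the orbit count contributed by that block is exactly the quantity computed in Lemma~\ref{lemma:orbitspowers}, namely $k_i^{m_i}\dfrac{(m_iN_i)!}{(N_i!)^{m_i}m_i!}$, because the action of $\G(\qo_i)^{m_i}\rtimes S_{m_i}$ on the sub-tuple of flags and the sub-sequence of symbols is precisely the power-action analyzed there.

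Next I would assemble these pieces. Because the $r$ blocks are relatively prime, the orbit of an element of $\mathcal B(\po)$ factors as a product of independent orbits, one per block, with no cross-block interaction (no automorphism mixes coordinates of $\qo_i^{m_i}$ with those of $\qo_j^{m_j}$ for $i\neq j$, and $S_{m_i}$ only permutes within its block). Therefore the total number of flag orbits is the product of the coarse block-distribution count with the per-block orbit counts:
\begin{equation*}
\frac{n!}{\Pi_{i=1}^r(m_iN_i)!}\cdot\Pi_{i=1}^r k_i^{m_i}\frac{(m_iN_i)!}{(N_i!)^{m_i}m_i!}
=\Pi_{i=1}^r k_i^{m_i}\,\frac{\left(\sum_{i=1}^r m_iN_i\right)!}{\Pi_{i=1}^r (N_i!)^{m_i}m_i!},
\end{equation*}
where the factors $(m_iN_i)!$ cancel, yielding exactly the claimed formula after replacing $N_i$ by $n_i$. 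The cleanest way to make the factorization rigorous is induction on $r$, using Proposition~\ref{relprime} to peel off the block $\qo_r^{m_r}$ (relatively prime to the rest) and Corollary~\ref{coro:orbitsrelatprimes}-style multiplicativity of the binomial/multinomial bookkeeping.

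The main obstacle I anticipate is justifying the multiplicativity of orbit counts across blocks with full rigor—specifically, arguing that the orbit of $(\{\Phi_1,\dots\},a)$ decomposes as an independent product over the $r$ blocks. This requires showing that the stabilizer-and-orbit structure respects the block decomposition of both $\mathcal A$ (the symbol sequence restricted to each block's positions is an independent invariant, once the block-partition of positions is fixed) and $\fl$, and that the $S_{m_i}$ in one block cannot affect the symbol-multiset of another block. I expect this to follow cleanly from the explicit action formula $(\{\Phi_{1\sigma}\gamma_1,\dots\},(a_1\sigma^{-1},\dots))$ recorded before Lemma~\ref{lemma:orbitspowers}, extended so that $\sigma\in S_{m_1}\times\dots\times S_{m_r}$ acts only on symbols indexing its own block; combined with the Duffus-type direct-product splitting this reduces the global orbit count to the product of the independently computed local counts.
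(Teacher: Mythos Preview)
Your approach is correct and matches the paper's intended route: the proposition is stated without proof precisely because it follows by applying Corollary~\ref{coro:orbitsrelatprimes} to the pairwise relatively prime factors $\qo_1^{m_1},\dots,\qo_r^{m_r}$ and plugging in, for each block, the orbit count $k_i^{m_i}\frac{(m_in_i)!}{(n_i!)^{m_i}m_i!}$ from Lemma~\ref{lemma:orbitspowers}, after which the $(m_in_i)!$ factors cancel. The ``main obstacle'' you anticipate is not really an obstacle---the multiplicativity across blocks is exactly the content of Corollary~\ref{coro:orbitsrelatprimes} (with $\qo_i^{m_i}$ playing the role of the $i$-th factor there), so you need not re-derive the orbit decomposition from the explicit action formula.
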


\begin{remark}
Note that in Lemmas~\ref{orbitstworelatprime} and \ref{lemma:orbitspowers}, Corollary~\ref{coro:orbitsrelatprimes} and Proposition~\ref{orbitsposet}, we never use the fact that the posets are strongly connected. Hence similar propositions for pre-polytopes (in the sense of \cite{arp}), and their products as posets hold.
\end{remark}

We would like to find out when a product of polytopes is a regular or a 2-orbit polytope. We start by analyzing the case when a product is regular. Then, the number of orbits of Proposition~\ref{orbitsposet} has to equal one.

Start by noticing that 
\begin{eqnarray}
\label{k}
k:= \Pi_{i=1}^r k_i^{m_i} \frac{(\sum_{i=1}^r m_in_i)!}{\Pi_{i=1}^r (n_i!)^{m_i} m_i!} 
=
\Pi_{i=1}^r k_i^{m_i} \frac{(\sum_{i=1}^r m_in_i)!}{\Pi_{i=1}^r (m_in_i)!}\Pi_{i=1}^r\frac{(m_in_i)!}{ (n_i!)^{m_i} m_i!}.
\end{eqnarray}
Since $\frac{(M_in_i)!}{(n_i!)^{m_i}m_i!}$ is an integer for every $i$, then $\frac{(\sum_{i=1}^r m_in_i)!}{\Pi_{i=1}^r (m_in_i)!}$ is also an integer.
Furthermore, whenever $r\geq 2$, then $\frac{(\sum_{i=1}^r m_in_i)!}{\Pi_{i=1}^r (m_in_i)!}>1$. Hence, if we want $k=1$, then $r=1$ (that is, $\po$ is a power of a prime poset $\qo$). Thus, $k$ becomes
$$
 k=k_1^{m_1} \frac{(m_1n_1)!}{ (n_1!)^{m_1} m_1!}.
$$
Again, since $\frac{(m_1n_1)!}{ (n_1!)^{m_1} m_1!}$ is an integer, this immediately implies that $k_1=1$ (that is, $\qo$ is regular) and that  $\frac{(m_1n_1)!}{ (n_1!)^{m_1} m_1!}=1$. 
The last equality holds if and only if either $m_1=1$ or $n_1=1$. 
In the first case, this implies that $\po$  is a prime poset. 
The second case implies that $\po = \qo^{m_1}$, where $\qo$ has maximal chains of size $2$. 

\begin{theorem}
Let $\po$ be a regular polytope. Then, $\po$ is prime with respect to all four products except in the following cases:
\begin{enumerate}
\item If $\po$ is an $n$-simplex, then $\po$ is not prime with respect to the join product. In fact, $\po = v \Join v \Join \dots, \Join v$, where $v$ is a $0$-polytope.
\item If $\po$ is an $n$-cube, then $\po$ is not prime with respect to the cartesian product. In fact, $\po = e \times e \times \dots \times e$, where $e$ is a $1$-polytope.
\item If $\po$ is an $n$-crosspolytope, then $\po$ is not prime with respect to the direct sum. In fact, $\po = e \oplus e \oplus \dots \oplus e$, where $e$ is a $1$-polytope.
\item  If $\po = \qo \ \square \ \qo \ \square \dots \square \ \qo$, where $\qo$ is a $2$-polytope, then $\po$ is a regular polytope that is not prime with respect to the topological product.
\end{enumerate}
\end{theorem}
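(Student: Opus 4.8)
The plan is to obtain the classification as a direct arithmetic consequence of the flag-orbit count in Proposition~\ref{orbitsposet}, combined with the reduction carried out in the paragraph preceding the statement. Fix one of the four products $\odot$ and let $\po$ be a regular polytope, so that its number of flag orbits is $1$. Writing the unique $\odot$-factorisation as $\po=\qo_1^{m_1}\odot\dots\odot\qo_r^{m_r}$ with the $\qo_i$ distinct $\odot$-primes, Proposition~\ref{orbitsposet} yields
$$ 1=\Pi_{i=1}^r k_i^{m_i}\,\frac{\left(\sum_{i=1}^r m_in_i\right)!}{\Pi_{i=1}^r (n_i!)^{m_i}m_i!}, $$
where $k_i$ counts the flag orbits of $\qo_i$ and $n_i=n_i^\odot$ is read off from the rank of $\qo_i$ via (\ref{not:ranks}).

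First I would use the regrouping (\ref{k}), which writes the right-hand side as the multinomial coefficient $\frac{(\sum_i m_in_i)!}{\Pi_i (m_in_i)!}$ times the integers $\frac{(m_in_i)!}{(n_i!)^{m_i}m_i!}$. Each of these integers is positive and the multinomial coefficient strictly exceeds $1$ once $r\geq 2$, so the equation forces $r=1$. Hence $\po=\qo^{m}$ for a single $\odot$-prime $\qo$, and writing $m=m_1$, $n=n_1$ the count collapses to $k_1^{m}\frac{(mn)!}{(n!)^{m}m!}=1$. Integrality of $\frac{(mn)!}{(n!)^{m}m!}$ then forces both $k_1=1$, so that $\qo$ is itself regular, and $\frac{(mn)!}{(n!)^{m}m!}=1$; the latter holds precisely when $m=1$ or $n=1$.

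These two possibilities are exactly the dichotomy of the theorem. If $m=1$ then $\po=\qo$ is prime, the default conclusion that needs no exception. If $n=1$ with $m\geq 2$, then $\po=\qo^{m}$ is a genuine power, and I would translate $n=n^\odot=1$ through (\ref{not:ranks}) into the rank of the prime factor: for $\odot=\Join$ the factor has rank $0$, so $\qo=v$ and $v^{m}$ is a simplex; for $\odot=\times$ and $\odot=\oplus$ it has rank $1$, so $\qo=e$ and $e^{m}$ is the cube, respectively the crosspolytope; for $\odot=\square$ it has rank $2$, so $\qo$ is a polygon, giving the last family. A short sanity check confirms each such $\qo$ really is $\odot$-prime and regular: for $\Join,\times,\oplus$ a rank-$0$ or rank-$1$ polytope admits no non-trivial factorisation, since two non-trivial factors would already carry too much rank; for $\square$ any product has rank at least $3$, so every polygon is $\square$-prime; and every abstract polygon is automatically regular.

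The point I expect to require the most care is not the number theory but the bookkeeping that the orbit count of Proposition~\ref{orbitsposet} must be taken with respect to the same product $\odot$ that governs the factorisation, so that $n_i$ always denotes $n_i^\odot$ rather than the literal rank. An off-by-one in this correspondence would misidentify the exceptional rank — and hence the exceptional family — for each product. Once (\ref{not:ranks}) is applied consistently, the four listed families are seen to exhaust the non-prime regular polytopes, completing the proof.
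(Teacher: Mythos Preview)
Your proposal is correct and follows essentially the same route as the paper: the paper carries out the reduction to $r=1$, $k_1=1$, and $m_1=1$ or $n_1=1$ in the discussion immediately preceding the theorem, and its proof then only translates $n_1=1$ back through the rank correspondence (\ref{not:ranks}) into each of the four exceptional families. Your write-up is slightly more explicit in checking that the identified factors are genuinely $\odot$-prime and regular, but the argument is the same.
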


\begin{proof}
Each of the cases follows from the above discussion and by the following facts. 
In every case $\po$ must be the product of identical copies of prime polytopes (with respect to the given product). 
The join product is a product of posets and hence, with the above notation, the size of a maximal chain of the poset $\qo$ coincides with the number of flags of $\qo$ as polytope, implying that the rank of $\po$ is zero. 
The cartesian product and the direct sum taking away one element are products of posets, hence, the size of a maximal chain of the poset $\qo$ is in fact one more than the rank of $\qo$, that is, the rank of $\po$ must be one. 
Finally, the topological product, when taking away the minimum and maximum elements, is a product of posets. Hence the rank of $\qo$ must in fact coincide with the number of elements in a maximal chain, when seen as a factor in the product. 
That is, $\qo$ has to be a $2$-polytope.
\end{proof}

Following a similar analysis we can obtain an analogous theorem for two-orbit polytopes.

\begin{theorem}
Let $\po$ be a two-orbit polytope. Then $\po$ is prime with respect to the four products, except in the case where $\po$ is a torus  $\{4,4\}_{(a,0),(0,b)}$. In this case $\po = \qo \ \square \ \ko$, where $\qo$ and $\ko$ are non-isomorphic $2$-polytopes, and $\po$ is prime with respect to the other four products.
\end{theorem}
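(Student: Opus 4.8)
The plan is to run the same argument as for regular polytopes, but with target value $2$ instead of $1$ in Proposition~\ref{orbitsposet}. Writing $\po=\qo_1^{m_1}\odot\cdots\odot\qo_r^{m_r}$ with the $\qo_i$ distinct primes, I would start from the factorisation~(\ref{k}) of the orbit number $k$ as a product of three blocks of positive integers — namely $\Pi_i k_i^{m_i}$, the multinomial $\frac{(\sum_i m_in_i)!}{\Pi_i(m_in_i)!}$, and $\Pi_i\frac{(m_in_i)!}{(n_i!)^{m_i}m_i!}$ — recalling that each individual factor appearing in these blocks is an integer. Setting $k=2$ and using that $2$ is prime, exactly one elementary factor equals $2$ and all the others equal $1$; I would then dispose of the three possibilities in turn.

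The key arithmetic input is that $\frac{(mn)!}{(n!)^m m!}$, which counts the unordered partitions of $mn$ labelled points into $m$ blocks of size $n$, equals $1$ when $m=1$ or $n=1$ and is at least $3$ once $m,n\ge2$ (for $m=2$ it is $\tfrac12\binom{2n}{n}\ge3$, and it only increases with $m$); hence it is never $2$, so the $2$ cannot come from the last block. If instead the $2$ comes from some $k_i^{m_i}$, then $m_i=1$, $k_i=2$, and every other factor is $1$; in particular the multinomial is $1$, which forces $r=1$, so $\po$ is already a single prime and there is nothing to prove.

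This leaves the multinomial $\frac{(\sum_i m_in_i)!}{\Pi_i(m_in_i)!}=2$. It equals $1$ for $r=1$ and is at least $\binom{3}{1,1,1}=6$ once $r\ge3$, so necessarily $r=2$; then $\binom{m_1n_1+m_2n_2}{m_1n_1}=2$ forces $m_1n_1=m_2n_2=1$, i.e. $m_1=m_2=1$ and $n_1=n_2=1$, and the remaining factors being $1$ gives $k_1=k_2=1$. Thus both prime factors are regular with $n_i=1$, and~(\ref{not:ranks}) translates $n_i=1$ into: two vertices if $\odot=\Join$, two edges if $\odot=\times,\oplus$, and two polygons if $\odot=\square$.

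It remains to interpret each case. Since the $0$-polytope and the $1$-polytope are each unique up to isomorphism, the would-be factors for $\Join$, $\times$ and $\oplus$ cannot be distinct primes; hence no two-orbit polytope factors non-trivially under these three products, and in particular every two-orbit polytope — the torus included — is prime with respect to $\Join$, $\times$ and $\oplus$. For $\odot=\square$ the two factors are non-isomorphic prime polygons $\qo\not\cong\ko$, and by the description of $\square$-products of polygons, $\qo\,\square\,\ko$ is the square-tiled torus $\{4,4\}_{(a,0),(0,b)}$ (with $a,b$ the two polygon sizes); conversely this product realises $k=k_1k_2\cdot2=2$, confirming it is a genuine two-orbit polytope. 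The only delicate points are the two arithmetic bounds above and the observation that \emph{distinctness} of the prime factors is precisely what rules out every product except $\square$.
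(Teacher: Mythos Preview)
Your proof is correct and follows essentially the same approach as the paper: set the orbit count from Proposition~\ref{orbitsposet} equal to $2$, factor it as in~(\ref{k}), and use primality of $2$ together with the arithmetic bounds on the multinomial pieces to force $r=2$, $m_1=m_2=n_1=n_2=1$, $k_1=k_2=1$. Your organisation (by which block carries the factor $2$, rather than by whether $r=1$ or $r>1$) is a cosmetic variant, and your explicit justification that $\frac{(mn)!}{(n!)^m m!}$ is never $2$ is a bit more detailed than the paper's, but the substance is identical.
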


\begin{proof}
If $\po$ is a two-orbit polytope, then $k$ in (\ref{k}) must equal 2. We divide the analysis into two cases, when $r=1$, and when $r>1$.

Suppose first that $\po = \qo^m$, with $\qo$ prime. Then, (setting $n_1=n$) $k = k_1^{m} \frac{(mn)!}{ (n!)^{m} m!}$. Since $\frac{(mn)!}{ (n!)^{m} m!} \in \mathbb{N}$, then either $k_1=1$ and $\frac{(mn)!}{ (n!)^{m} m!}=2$ or $k_1=2$ and $m=1$. The first case can never happen (as $\frac{(mn)!}{ (n!)^{m} m!}\neq 1$ implies $\frac{(mn)!}{ (n!)^{m} m!} >2$), and in the second case $\po$ is simply a prime two-orbit polytope.

Suppose now that $r>1$. 
Since $\frac{(\sum_{i=1}^r m_in_i)!}{\Pi_{i=1}^r (m_in_i)!} > 1$ and $k=2$, we have that $\frac{(\sum_{i=1}^r m_in_i)!}{\Pi_{i=1}^r (m_in_i)!} = 2$ and hence $\Pi_{i=1}^r k_i^{m_i}\Pi_{i=1}^r\frac{(m_in_i)!}{ (n_i!)^{m_i} m_i!} = 1$. As for every $i$, $\frac{(m_in_i)!}{ (n_i!)^{m_i} m_i!}$ is an integer, this in turn implies that every $k_i = 1$ and that for every $i$, $\frac{(m_in_i)!}{ (n_i!)^{m_i} m_i!} =1$.
Let $b_i:=n_im_i$, for every $i$. Then we have that $\frac{(b_1+b_2+ \dots b_r)!}{b_1!b_2!\dots b_r!} = 2$. 
The last equality holds if and only if $r=2$ and $b_1=b_2=1$. Hence, $m_1=m_2=n_1=n_2=1$.

As pointed out before, for the join product, the cartesian product and the direct product, $n_1=n_2=1$ implies that $\qo_1=\qo_2$ and they are either a $0$-polytope or a $1$-polytope. However, we are under the assumption that $\qo_1$ and $\qo_2$ are relatively primes.

Hence, we only have left the case when $\qo_1$ and $\qo_2$, as polytopes, have rank $2$ and they are relatively prime with respect to $\square$. Since all rank 2 polytopes are prime with respect to the topological product, then $\qo_1$ and $\qo_2$ are only required to be non isomorphic 2-polytopes. This establishes the theorem.
\end{proof}

\section{Products and monodromy groups}\label{sec:mono}

The monodromy groups of an abstract polytope encapsulates all the combinatorial information of the polytope (see \cite{hartely99},\cite{hubard2009monodromy}). However, monodromy groups of non-regular abstract polytopes have been proven hard to understand (see \cite{mixmono}). Here, we study some basic properties of the monodromy group of a product. 

To this end, we use the description of the flags of a product given in Section~\ref{sec:flags}. As we have seen throughout, the four products of polytopes studied in this paper behave very much alike. Here, we give the details of our proofs only for the join product. The details of the other three products can be recovered from this one by making small modifications.

Let $\po = \qo_1\Join \qo_2 \Join \dots \Join \qo_r$ be an $(n-1)$-polytope, where $n=n_1+\dots+n_r$ and $n_i-1$ is the rank of the polytope $\qo_i$. 
For convenience, throughout this section we shall make use of Lemma~\ref{flagbijection}, and write each $\Phi\in\fl(\po)$ as $(\Psi^{(1)}, \Psi^{(2)}, \dots, \Psi^{(r)}, a)$, where each $\Psi^{(j)} \in \fl(\qo_j)$ and $a\in {\mathcal A}$. (Recall that $\mathcal A$ is the set of ordered $n$-tuples $a=(a_0,a_1, \dots, a_{n-1})$ with $a_j\in \{1, \dots, r\}$ and such that each $j \in \{1, \dots, r\}$ appears exactly $n_j$ times in $a$.)
Hence, we regard the set of flag $\fl(\po)$ and the set $\fl(\qo_1) \times \fl(\qo_2) \times \dots \times \fl(\qo_r) \times {\mathcal A}$ as the same object and use one or the other indistinctly.

Note that we can regard $S_n$ as the permutation group on the symbols $\{0,1,\dots, n-1\}$, and hence $S_n$ acts on $\mathcal A$ in a natural way. That is, given $a=(a_0,a_1, \dots, a_{n-1})\in {\mathcal A}$ and $\alpha \in S_n$, then $a\alpha = (a_{0\alpha},a_{1\alpha}, \dots, a_{(n-1)\alpha})$.
Let $r_0^{(i)}, r_1^{(i)}, \dots , r_{n_i-2}^{(i)}$ be the generators of the monodromy group ${\mathcal M}(\qo_i):=M_i$ of $\qo_i$. 
Hence, each $r_{j}^{(i)}$ permutes every flag of $\qo_i$ with its $j$-adjecent one.
Let $M:=M_1\times M_2\times \dots\times M_r$. 

We shall start by showing that the wreath product $\W:=M \wr_{\mathcal A} S_n$ of $M$ by $S_n$ acting on $\mathcal A$ as described above, acts on the set $\fl(\po)$ in a faithful way.
Recall that if $w=(\{w_b\}_{b\in{\mathcal A}}, \alpha)$, $v=(\{v_b\}_{b\in{\mathcal A}}, \beta) \in \W$, then 
$$wv=\big(\{w_bv_{b\alpha}\}_{b\in{\mathcal A}}, \beta\alpha\big).$$

Let $\Phi = (\Psi^{(1)}, \Psi^{(2)}, \dots, \Psi^{(r)}, a) \in \fl(\po)$ and $w=(\{w_b\}_{b\in{\mathcal A}}, \alpha) \in \W$. 
Since for every $b\in{\mathcal A}$, we have that $w_b\in M$, then $w_b$ is an $r$-tuple $w_b=(w_b^{(1)}, w_b^{(2)}, \dots, w_b^{(r)})$, with $w_b^{(j)}\in M_j$.
Hence, the action on $w$ on $\Phi$ is given by
\begin{eqnarray}
\label{monoaction}
\Phi w = (\Psi^{(1)}w_a^{(1)}, \Psi^{(2)}w_a^{(2)}, \dots, \Psi^{(r)}w_a^{(v)}, a\alpha^{-1}).
\end{eqnarray}
It is not difficult to see that (\ref{monoaction}) in fact defines an action of $\W$ on $\fl(\po)$ and that if an element  $w=(\{w_b\}_{b\in{\mathcal A}}, \alpha)\in W$ fixes every flag of $\po$, then $\alpha=1_{S_n}$ and for each  $b\in {\mathcal A}$ we have that $w_b=(1_{M_1}, 1_{M_2}, \dots, 1_{M_r})$, implying that the action  is faithful.

Let $s_0, s_1, \dots s_{n-2}$ be the generators of the monodromy group of $\po$, $\Mon(\po)$. Since each $s_k$ permutes every flag of $\po$ with its $k$-adjacent one, in order to understand $\Mon(\po)$ we need to understand the flag adjacencies in $\po$.
Let $k\in\{0,1,\dots n-2\}$. 
Consider the $k$-adjacent flag to $\Phi$, $\Phi^k$. 
Then we can write $\Phi=\{\Phi_{-1}, \Phi_0, \dots \Phi_{n-3},\Phi_{n-2}\}$ and $\Phi^k=\{\Phi_{-1}, \dots \Phi_{k-1}, \Lambda,\Phi_{k+1},\dots, \Phi_{n-2}\}$, where each $\Phi_i$, $i=-1,\dots, n-1$ as well as $\Lambda$ are faces of the product. 
For each $i$, we write $\Phi_i=(F_i^1, F_i^2, \dots, F_i^r)$ and  $\Lambda=(G^1, G^2, \dots, G^r)$.
Using the definition of the order of the product $\po$, we observe that, for each $i$, $\Phi_i$ and $\Phi_{i+1}$ differ in exactly one element (in fact, they differ in their $a_i^{(\Phi)}$ element). 
Hence, $\Phi_{k-1}$ and $\Phi_{k+1}$ differ on at most two elements and on at least one. 
Our study then naturally splits into two cases: when $\Phi_{k-1}$ and $\Phi_{k+1}$ differ on one or two elements.


We start by assuming that $\Phi_{k-1}$ and $\Phi_{k+1}$ differ in exactly one element, say $F_{k-1}^j$. That is, $\Phi_{k-1}=(F_{k-1}^1, F_{k-1}^2, \dots, F_{k-1}^r)$ and $\Phi_{k+1}=(F_{k-1}^1, \dots, F_{k-1}^{j-1}, F_{k+1}^j,F_{k-1}^{j+1},\dots, F_{k-1}^r)$. 
This immediately implies that $F_{k-1}^i=F_k^i=G^i$ for all $i\neq j$, and that, in $\qo_j$, $F^j_{k-1} , F^j_k , F^j_{k+1}$ are three different faces that are incident and whose ranks are consecutive. The same holds true for $F^j_{k-1} , G_k , F^j_{k+1}$.
In other words, when 
$a_k=a_{k+1}=j$, then
\begin{eqnarray}
\label{eq:ad1}
\Phi^k = (\{\Psi^{(1)}, \dots, \Psi^{(a_k-1)},(\Psi^{(a_k)})^l,\Psi^{(a_k+1)}\dots \Psi^{(r)}\}, (a_0,a_1, \dots a_{n-1})),
\end{eqnarray} 
where $(\Psi^{(a_k)})^l$ is the $l$-adjacent flag to $\Psi^{(a_k)}$, and $l$ is the number of times that $a_i$ appears in the sequence $a_0, a_1, \dots a_{k-1}$.

Suppose now that $\Phi_{k-1}$ and $\Phi_{k+1}$ differ in exactly two elements, say on those corresponding to $j_0$ and $j_1$.
Then $F^i_{k-1}=F^i_k=F^i_{k+1}=G^i$ for all $i\neq j_0, j_1$.
Furthermore, either $$F^{j_0}_{k-1}=F^{j_0}_k\neq F^{j_0}_{k+1} \ \mathrm{and} \ F^{j_1}_{k-1}\neq F^{j_1}_k=F^{j_1}_{k+1},$$
or
$$F^{j_0}_{k-1}\neq F^{j_0}_k= F^{j_0}_{k+1} \ \mathrm{and} \ F^{j_1}_{k-1}= F^{j_1}_k\neq F^{j_1}_{k+1},$$
In other words, when $a_k\neq a_{k+1}$,
\begin{eqnarray}
\label{eq:ad2}
\Phi^k= (\{\Psi^{(1)}, \dots,  \Psi^{(r)}\}, (a_0,\dots a_{k-1}, a_{k+1}, a_k, a_{k+2}, \dots a_{n-1})).
\end{eqnarray}

We are now ready to relate the monodromy group of $\po$ with the wreath product $\W$. 
Given $\odot\in\{\Join,\times,\oplus,\square\}$ and $\po=\qo_1\odot\qo_2\odot\dots\odot\qo_r$ a product polytope,
if $n_i$ is the rank of the polytope $\qo_i$, then we define $n$ as,
\begin{eqnarray}
\label{n}
n:=\left \{
\begin{array}{ll}
\Sigma_{i=1}^r n_i + r \ &\mathrm{if} \ \odot=\Join;  \\
\Sigma_{i=1}^r n_i \ &\mathrm{if} \ \odot=\times, \oplus; \\
\Sigma_{i=1}^r n_i - r \ &\mathrm{if} \ \odot=\square.
\end{array}\right.
\end{eqnarray}

\begin{prop}
\label{prop:Monext}
Let $\odot\in\{\Join,\times,\oplus,\square\}$.
Given polytopes $\qo_1,\qo_2,\dots,\qo_r$ and $\po=\qo_1\odot\qo_2\odot\dots\odot\qo_r$, the monodromy group of $\po$, $\Mon(\po)$, can be embedded as a subgroup of the wreath product $\mathcal{W}=M \wr_{\mathcal A} S_n$, where $M$ is the direct product of the monodromy groups of the $\qo_i$ and $n$ is as in (\ref{n}). 
Moreover the projection on the second factor $\pi|_{\Mon(\po)}: \Mon(\po)\to S_{n}$ is surjective. 
\end{prop}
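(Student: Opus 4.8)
The plan is to realise each standard generator $s_k$ of $\Mon(\po)$ as an explicit element of $\W$ whose second coordinate is the adjacent transposition $(k,k+1)\in S_n$, and then to read off the projection $\pi$. Following the convention of the section, I would carry out the argument for $\odot=\Join$, writing $\po=\qo_1\Join\dots\Join\qo_r$ and identifying $\fl(\po)$ with $\fl(\qo_1)\times\dots\times\fl(\qo_r)\times\mathcal{A}$ as in Lemma~\ref{flagbijection}; the other three products are handled by the same bookkeeping with $n$ adjusted as in (\ref{n}).

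First, for a fixed $k\in\{0,\dots,n-2\}$ I would define a candidate $\bar{s}_k=(\{w_b\}_{b\in\mathcal{A}},(k,k+1))\in\W$ directly from the adjacency analysis in (\ref{eq:ad1}) and (\ref{eq:ad2}). For $b\in\mathcal{A}$, set $w_b=1_M$ whenever $b_k\neq b_{k+1}$, and set $w_b=(1_{M_1},\dots,r_l^{(j)},\dots,1_{M_r})$, with the factor $r_l^{(j)}$ placed in coordinate $j$, whenever $b_k=b_{k+1}=j$; here $l$ is the number of occurrences of $j$ among $b_0,\dots,b_{k-1}$, so that $r_l^{(j)}$ is the generator of $M_j$ producing the internal $l$-adjacency appearing in (\ref{eq:ad1}). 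Since this prescription reads $j$ and $l$ off of $b$, the element $\bar{s}_k$ is a well-defined member of $\W$.

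Next I would verify, using the action formula (\ref{monoaction}), that $\bar{s}_k$ and $s_k$ agree on every flag. The delicate point — and the heart of the argument — is that a single fixed permutation $(k,k+1)$ in the second coordinate must reproduce the two qualitatively different behaviours of $s_k$. This works precisely because $(k,k+1)$ swaps the $k$-th and $(k+1)$-th entries of $a$, which leaves $a$ unchanged exactly when $a_k=a_{k+1}$: in the case $a_k=a_{k+1}=j$ the second coordinate is therefore fixed and the prescribed $w_a$ performs the internal adjacency $(\Psi^{(j)})^l$ of (\ref{eq:ad1}), while in the case $a_k\neq a_{k+1}$ one has $w_a=1_M$ and only the swap of $a$ described in (\ref{eq:ad2}) occurs. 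Since the action of $\W$ on $\fl(\po)$ is faithful, as established just after (\ref{monoaction}), this matching of actions lets me identify $s_k$ with $\bar{s}_k$; hence $\Mon(\po)=\langle s_0,\dots,s_{n-2}\rangle$ embeds as a subgroup of $\W$.

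Finally, for the surjectivity of $\pi|_{\Mon(\po)}$, I would observe that $\pi(\bar{s}_k)=(k,k+1)$ for each $k$, so the image $\pi(\Mon(\po))$ contains all adjacent transpositions $(0,1),(1,2),\dots,(n-2,n-1)$; as these generate $S_n$, the projection is onto. I expect the embedding step — in particular the careful check that $\bar{s}_k$ reproduces $s_k$ uniformly across all flags — to be the only part requiring real care, with surjectivity following immediately once the generators have been located inside $\W$.
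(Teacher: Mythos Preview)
Your proposal is correct and matches the paper's proof essentially line for line: the paper also fixes $k$, defines the same element $(\{w_b\}_{b\in\mathcal A},(k,k+1))\in\W$ with $w_b$ trivial when $b_k\neq b_{k+1}$ and equal to $r_l^{(j)}$ in the $j$-th slot when $b_k=b_{k+1}=j$ (with $l$ counted exactly as you do), checks via (\ref{eq:ad1})--(\ref{eq:ad2}) and faithfulness that this element realises $s_k$, and then reads off surjectivity from $\pi(s_k)=(k,k+1)$. Your write-up is in fact a bit more explicit about why the single transposition $(k,k+1)$ handles both adjacency types uniformly, which is a helpful clarification.
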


\begin{proof} 
We give a proof for when $\odot=\Join$, the other three cases are similar.
Since we know that $\W$ acts faithfully on $\fl(\po$), to settle the first part of the proposition, it is enough to show that we can embed each of the generators of $\Mon(\po)$ in $\W$.

Let $k\in\{0,\dots n-2\}$ be fixed, and let $\Sigma:=\{a\in{\mathcal A}\mid a_k\neq a_{k+1}\}$ and for each $j\in\{1,\dots, r\}$ let $\Sigma_j:=\{a\in{\mathcal A}\mid a_k=a_{k+1}=j\}$. Consider $w_k=(\{(w_b^{(1)}, w_b^{(2)}, \dots,w_b^{(r)})\}_{b\in{\mathcal A}}, \alpha)\in\W$, where $\alpha=(k,k+1)\in S_n$ and 
\begin{eqnarray*}
w_b^{(j)}=\left \{
\begin{array}{ll}
1_{M_j} &\mathrm{if} \ b\in\Sigma\cup\bigcup_{i\neq j}\Sigma_i, \\
& \\
r^{(j)}_l&\mathrm{if} \ b\in\Sigma_j,
\end{array}\right.
\end{eqnarray*}
where $l$ is the number of times that $j$ appears in the sequence $b_0, b_1, \dots b_{k-1}$.
Using (\ref{eq:ad1}) and (\ref{eq:ad2}) is straightforward to see that for every $\Phi\in\fl$, $\Phi w_k=\Phi^k$. 
Hence each generator of $\Mon(\po)$ can be embedded into $\W$, implying that $\Mon(\po)$ can be embedded as a subgroup of the wreath product $\mathcal{W}$.

Since $\pi(w_k)=(k,k+1)\in S_n$,  the second part of the proposition follows.
\end{proof}

\begin{coro}
Let $\odot\in\{\Join,\times,\oplus,\square\}$ and $\po=\qo_1\odot\qo_2\odot\dots\odot\qo_r$ be a polytope. Then the monodromy group of $\po$ is an extension of a symmetric group $S_n$, where $n$ is as in (\ref{n}).
\end{coro}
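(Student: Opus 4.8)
The plan is to obtain the corollary as an immediate formal consequence of Proposition~\ref{prop:Monext}, so that all of the substantive work has already been done: the idea is simply to turn the surjection furnished by that proposition into a short exact sequence and read off the extension.

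First I would note that the projection onto the top group, $\pi\colon \W \to S_n$ sending $(\{w_b\}_{b\in{\mathcal A}},\alpha)$ to $\alpha$, is a group homomorphism; this is standard for wreath products and is visible directly from the multiplication law $wv=(\{w_bv_{b\alpha}\}_{b\in{\mathcal A}},\beta\alpha)$ recorded before the proposition, since the top coordinate of a product is determined by, and composes, the top coordinates of the factors. By Proposition~\ref{prop:Monext}, $\Mon(\po)$ sits inside $\W$ and the restriction $\pi|_{\Mon(\po)}\colon \Mon(\po)\to S_n$ is surjective, with $n$ the integer of (\ref{n}).

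I would then set $K:=\ker(\pi|_{\Mon(\po)})$, a normal subgroup of $\Mon(\po)$ as the kernel of a homomorphism. The first isomorphism theorem together with surjectivity gives $\Mon(\po)/K\cong S_n$, equivalently the short exact sequence
$$1 \longrightarrow K \longrightarrow \Mon(\po) \xrightarrow{\ \pi\ } S_n \longrightarrow 1,$$
which is precisely the assertion that $\Mon(\po)$ is an extension of $S_n$ by $K$. No further computation is required, and $n$ is inherited unchanged from the proposition.

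There is no genuine obstacle here: the entire content lives in Proposition~\ref{prop:Monext}, and the corollary is just the packaging of its surjectivity claim into an exact sequence. The only points needing a word of care are confirming that $\pi$ respects multiplication and fixing the meaning of ``extension'' as the existence of the sequence above. I would, however, add a remark identifying the kernel, since it is $K$ rather than the quotient $S_n$ that carries the interesting structure: $K$ is exactly the set of words in $s_0,\dots,s_{n-2}$ whose $S_n$-component is trivial, i.e.\ those that fix the ${\mathcal A}$-coordinate of every flag and act only through the base group $M=M_1\times\cdots\times M_r$, so that $K$ embeds in $M^{|{\mathcal A}|}$. Determining $K$ precisely, and deciding when the sequence splits, is the sharper problem addressed in the examples of Theorem~B.
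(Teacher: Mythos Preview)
Your proposal is correct and matches the paper's approach: the corollary is stated without a separate proof, being an immediate consequence of the surjectivity in Proposition~\ref{prop:Monext} via the first isomorphism theorem, and the paper likewise defines $K$ as the kernel of $\pi|_{\Mon(\po)}$ right after the corollary. Your added remarks about $K$ embedding in $M^{|{\mathcal A}|}$ and the splitting question anticipate exactly what the paper does in the subsequent subsections.
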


This corollary tells us that the monodromy group of a product is always an extension of a symmetric group. However, this extension does not always splits, and figuring out when it does is not always easy or straightforward. In what follows we show how can this be computed in some simple examples. For the remainder of this section, we let $\pi:\W\to S_n$ be the natural projection and $K$ be the kernel of the restriction of $\pi$ to $\Mon(\qo_1\odot\qo_2\odot\dots\odot\qo_r)$.

\subsection{On the monodromy group of pyramids}

Let $\po$ be an $n$-polytope and consider its pyramid $\Pyr(\po)=\po\Join v$. 
Let $S_{n+2}$ denote the symmetric group on the symbols $0,1, \dots, n+1$, and for $i=0,\dots,n+1$,  let $\sigma_i=(i, i+1)\in S_{n+2}$.
The set $\mathcal A$ is the set $\{e_0,e_1, \dots, e_{n+1}\}$, where $e_i$ is the vector with $n+2$ entires such that  the entire $(i+1)$ is $2$ and the rest of them are $1$.
Since $v$ is a $0$-polytope, its monodormy group is trivial. Hence $\Mon(\Pyr(\po))$ is embedded as a subgroup of the wreath product $\W=\Mon(\po) \wr_{\mathcal A} S_{n+1}$.

Let $r_0, \dots, r_{n-1}$ be the generators of $\Mon(\po)$. Following the proof of Proposition~\ref{prop:Monext}, we can see that the generators $s_0, \dots, s_{n}$ of $\Mon(\Pyr(\po))$ can be regarded as:
\begin{eqnarray}\label{genMONOpyr} s_i=(r_{i-1}, \dots , r_{i-1}, 1,1,r_i\dots,r_i, \sigma_i)\in\W,\end{eqnarray}
where the identity element $1$ of $\Mon(\po)$ is in the $(i+1)$ and $(i+2)$ entries.

Observe that for each $i\in\{0,\dots, n-2\}$,
$$ (s_is_{i+1})^3 = ( (r_{i-1}r_i)^3, \dots ,(r_{i-1}r_i)^3, 1,1,1, (r_{i}r_{i+1})^3, \dots, (r_{i}r_{i+1})^3, \epsilon),$$
where the identity element $1$ of $\Mon(\po)$ is in the $(i+1)$, $(i+2)$ and $(i+3)$ entries and $\epsilon$ denotes the identity of $S_{n+1}$. Hence, the order of $s_is_{i+1}$ is $lcm[3, p_{i-1}, p_i]$, the largest common multiple of $3$, $p_i$ and $p_{i-1}$, where $p_j$ is the order of $r_{j}r_{j+1}$ in $\Mon(\po)$.

Computing the kernel $K$ of the restriction of the projection $\pi:\W\to S_{n+2}$ to $\Mon(\po)$ is rather difficult in general. 
One can use similar techniques to the ones we shall use in Section~\ref{sec:MonoPrism} to show that when $\po$ is a $p$-gon (that is, the simplest case of the pyramid) then $K \cong (C_m)^4$, where $m=\frac{p}{gcd(3,p)}$. Hence, the monodromy group is an extension of $S_4$ by $(C_m)^4$. Moreover, the extension splits if and only if $p$ is not congruent to $0$ modulo $9$. 
We do not give the details of this here, as this group has been previously computed in \cite{berman2014monodromy}.

\subsection{On the monodromy group of prisms}\label{sec:MonoPrism}
Let $\po$ be an $n$-polytope and consider its prism $\Pri(\po)=\po\times e$. We start by making some general remarks about $\Pri(\po)$ to exemplify how the above discussion would apply to one example in the cartesian product, and then proceed to compute the monodromy group of the prism over a polygon as an extension of $S_3$.

Let $S_{n+1}$ denote the symmetric group on the symbols $1, \dots, n+1$, let $\epsilon$ denote the identity of $S_{n+1}$ and for $i=1,\dots,n$, let $\sigma_i=(i, i+1)\in S_{n+1}$.
The set $\mathcal A$ is the set $\{e_1,e_2, \dots, e_{n+1}\}$, where $e_i$ is the vector with $n+1$ entries such that the entry $i$ is $2$ and the rest of them are $1$.
Since $e$ is a $1$-polytope, its monodromy group is a cyclic group of order 2. We let $t$ denote its generator.
And let $r_0, \dots, r_{n-1}$ be the generators of $\Mon(\po)$. 

By Proposition~\ref{prop:Monext}, $\Mon(\Pri(\po))$ can be embedded in $\W=(\Mon(\po)\times C_2) \wr_{\mathcal A} S_{n+1}$. 
Note that in this case, the $i$-adjacency of the flags of $\Pri(\po)$ is not described anymore by (\ref{eq:ad1}) and (\ref{eq:ad2}). 
Let  $\Phi=(\Psi, \Lambda, a)$ be a flag of $\Pri(\po)$, where $\Psi \in \fl(\po)$, $\Lambda\in\fl(e)$ and $a\in {\mathcal A}$. Then, the $0$-adjacency of a flag $\Phi$ is determined only by the value of the first entry of $a$.
\begin{eqnarray*}
\Phi^0=\left \{
\begin{array}{ll}
(\Psi^0, \Lambda,a) &\mathrm{if} \ a\neq e_1;\\
(\Psi, \Lambda^0,a) &\mathrm{if} \ a=e_1 .
\end{array}\right.
\end{eqnarray*}
For $i>0$, the $i$-adjacency is similar as that in (\ref{eq:ad1}) and (\ref{eq:ad2}). There is a small modification that has to be done, obtaining that,
\begin{eqnarray*}
\Phi^i=\left \{
\begin{array}{ll}
(\Psi, \Lambda,e_{i+1}) &\mathrm{if} \ a=e_i ;\\
(\Psi, \Lambda,e_{i}) &\mathrm{if} \ a=e_{i+1}; \\
(\Psi^{i-1}, \Lambda,a) &\mathrm{if} \ a\in\{e_1,\dots, e_{i-1}  \};\\
(\Psi^{i}, \Lambda,a) &\mathrm{if} \ a\in\{e_{i+2}, \dots, e_{n+1} \}.
\end{array}\right.
\end{eqnarray*}

Hence, using this to modify the ideas of the proof of Proposition~\ref{prop:Monext}, if $s_0, \dots, s_n$ denote the generators of $\Mon(\Pri(\po))$, then 
\begin{eqnarray}\label{genMONOprism}
s_0 &=& \big( (1,t), (r_0,1), (r_0,1), \dots, (r_0,1), \epsilon \big), \\
s_i &=& \big((r_{i-1},1), \dots , (r_{i-1},1), (1,1),(1,1),(r_i,1)\dots,(r_i,1), \sigma_i\big), \ \mathrm{if} \ i>0, \nonumber
\end{eqnarray}
where the identity element $1$ of $\Mon(\po)$ is in the $i$ and $(i+1)$ entries.

Computing the kernel $K$ is not always easy and depends on the monodromy group of $\po$. 
In what follows, we compute $K$, whenever $\po$ is a $p$-gon, that is, $\Pri(\po)$ is  simply the prism over a polygon.
In \cite{hartley2012minimal} the monodromy groups of prisms over polygons were computed in terms of generators and relations. 
Here, we also have the generators and can infer the relations of the group, but we shall focus on computing such group as a split extension of $S_3$. 

Let  $\po$ be a $p$-gon, and let $\qo$ be the prism over $\po$, that is $\qo=\Pri(\po)$.  
By Proposition~\ref{prop:Monext}, $\Mon(\qo)$ can be embedded into the wreath product $\W=M\wr_{\mathcal A} S_3$, where $M=\Mon(\po) \times C_2$ and ${\mathcal A}=\{(2,1,1), (1,2,1), (1,1,2)\}$. Hence, $\Mon(\qo)$ is in fact an  extension of $S_3$ by the group $K$,  the kernel of the restriction of $\pi:\W\to S_3$ to $\Mon(\qo)$. 
Furthermore, the generators in (\ref{genMONOprism}) become:
\begin{eqnarray*}
s_0 &=& \big( (1,t), (r_0,1), (r_0,1),  \epsilon \big), \\
s_1 &=& \big((1,1),(1,1),(r_1,1), \sigma_1\big), \\
s_2 &=& \big((r_1,1),(1,1),(1,1), \sigma_2\big).
\end{eqnarray*}

We start by noticing that $s_1s_2$ has order 3. Moreover, observe that
$$s_0s_1=\big( (1,t), (r_0,1), (r_0r_1,1), \sigma_1\big),$$ and hence
$$(s_0s_1)^2 = \big( (r_0,t), (r_0,t), ((r_0r_1)^2,1), \epsilon\big).$$
From here is straightforward to see that the order of $s_0s_1$ is $4m$, where $m=\frac{p}{gcd(p,4)}$
Moreover, 
$$s_2(s_0s_1)^2s_2=\big( (r_1r_0r_1,t),  ((r_0r_1)^2,1), (r_0,t),\epsilon\big).$$
and so $(s_0s_1)^2, s_2(s_0s_1)^2s_2 \in K$. In fact, it is not too difficult to see that $s_0, (s_0s_1)^2$ and $ s_2(s_0s_1)^2s_2$  generate $K$. 

We now study the structure of the group $K$. In order to slightly simplify our notation, we let $a:=s_0$, $b:=(s_0s_1)^2$, $c:=s_2bs_2$ and $d:=s_1cs_1$. Hence, $K=\langle a,b,c\rangle$ and $d= abac\in K$.
Observe that 
\begin{eqnarray*}
b^2&=&\big( (1,1), (1,1), ((r_0r_1)^4,1), \epsilon\big); \\
c^2&=&\big( (1,1), ((r_0r_1)^4,1),(1,1), \epsilon\big); \\
d^2&=&\big( ((r_0r_1)^4,1), (1,1), (1,1), \epsilon\big). \\
\end{eqnarray*}
Hence, the group $H=\langle b^2, c^2, d^2\rangle < K$ is isomorphic to $(C_m)^3$ (this group $H$ actually coincides with the group $H$ in \cite[Section 6]{hartley2012minimal}).
It is straightforward to see (using the description of the generators as elements of $\W$) that $H$ is normal in $K$. 
Moreover, the elements of the quotient $K/H$ are simply $\{H, aH, bH, cH, abH, acH, bcH, abcH\}$, implying that $K/H\cong (C_2)^3$. In other words, $K$ is an extension of $(C_2)^3$ by $(C_m)^3$.

Whenever $m$ is odd (that is, if $p$ is different than $0$ mod $8$), then $a, b^m,c^m \notin K\setminus H$ generate the $(C_2)^3$, implying that the extension splits. Otherwise, there are no elements of $K\setminus H$ that generate the $(C_2)^3$ and the extension does not split. Hence, we have the following proposition.

\begin{prop}
Let $\qo$ be the prism over a $p$-gon. Then, $\Mon(\qo) \cong K \rtimes S_3$, where $K$ is an extension of $(C_2)^3$ by $(C_m)^3$, with $m=\frac{p}{gcd(p,4)}$. Furthermore, this extension splits whenever $p$ is not congruent to $0$ modulo $8$. In this case, 
$$\Mon(\qo) \cong \big( (C_2)^3 \rtimes (C_m)^3\big) \rtimes S_3.$$
\end{prop}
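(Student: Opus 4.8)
The plan is to assemble the explicit wreath-product computations made above into the two claimed isomorphisms, in four stages: splitting off the $S_3$ on top, locating $K$ inside the base group, identifying the normal subgroup $H\cong(C_m)^3$ together with the quotient $(C_2)^3$, and producing a complement to $H$ when $m$ is odd. First I would show that the extension $1\to K\to\Mon(\qo)\to S_3\to 1$ furnished by Proposition~\ref{prop:Monext} already splits with complement $\langle s_1,s_2\rangle$: since $s_1$ and $s_2$ are involutions and $s_1s_2$ has order $3$, the subgroup $\langle s_1,s_2\rangle$ is a quotient of $S_3$, and as $\pi$ carries it onto $\langle\sigma_1,\sigma_2\rangle=S_3$ it has order exactly $6$; hence $\langle s_1,s_2\rangle\cong S_3$, it meets $K=\ker\pi$ trivially, and $\Mon(\qo)\cong K\rtimes S_3$.

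Next I would exploit that $K=\ker\pi$ lies entirely in the base group $M^{\mathcal A}=M^3$ of $\W$ (every element of $K$ has trivial $S_3$-part), so elements of $K$ multiply coordinate-wise over $\mathcal A=\{e_1,e_2,e_3\}$. Because $s_0\in K$ and $\Mon(\qo)=K\rtimes\langle s_1,s_2\rangle$, a Reidemeister--Schreier computation with the transversal $\{1,s_1,s_2,s_1s_2,s_2s_1,s_1s_2s_1\}$ shows that $K$ is generated by the six $S_3$-conjugates of $s_0$; using the identity $b=(s_0s_1)^2=s_0\,(s_1s_0s_1)$ and its analogues, these reduce to $a=s_0$, $b$ and $c=s_2bs_2$, so $K=\langle a,b,c\rangle$. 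From the displayed values of $b^2,c^2,d^2$ (with $d=s_1cs_1=abac$) one reads off that each is supported on a single distinct coordinate of $M^3$ with value $((r_0r_1)^4,1)$, an element of order $m=\frac{p}{\gcd(p,4)}$; hence $H=\langle b^2,c^2,d^2\rangle\cong(C_m)^3$. Conjugation by $a,b,c$ acts coordinate-wise and sends each generator of $H$ to a power of itself, so $H\trianglelefteq K$, and inspecting the coordinate representatives shows that the eight cosets $H,aH,bH,cH,abH,acH,bcH,abcH$ are distinct, whence $K/H\cong(C_2)^3$. Together with the previous step this proves the first assertion.

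Finally, for the splitting when $p\not\equiv 0\pmod 8$, i.e.\ when $m$ is odd, I would build an explicit complement of $H$ in $K$. As $b$ and $c$ have order $2m$ and $m$ is odd, $b^m$ and $c^m$ are involutions with $b^mH=bH$ and $c^mH=cH$, while $a=s_0$ is an involution; thus $a,b^m,c^m$ lift the three generators of $K/H$. The key point is that they commute pairwise, which I would verify coordinate-by-coordinate in $M^3$: the only potentially non-commuting situations reduce to a reflection meeting a rotation power $(r_0r_1)^{2m}$, and these commute because $(r_0r_1)^{4m}=1$, that is $p\mid 4m$, a divisibility that holds for every $p$. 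Hence $\langle a,b^m,c^m\rangle\cong(C_2)^3$ is a complement, $K$ splits as $(C_2)^3\rtimes(C_m)^3$, and combining with $\Mon(\qo)\cong K\rtimes S_3$ yields $\Mon(\qo)\cong\big((C_2)^3\rtimes(C_m)^3\big)\rtimes S_3$.

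I expect the main obstacle to be the rigorous identification $K=\langle a,b,c\rangle$ and the normality of $H$ in $K$: these are the steps the exposition passes over as immediate, and making them precise requires the Reidemeister--Schreier bookkeeping above together with the coordinate description of $K\le M^3$. Once that description is in hand, the remaining verifications---the orders of $b^2,c^2,d^2$, the distinctness of the eight cosets, and the pairwise commutativity in the split case---are routine, the only genuinely arithmetic input being $p\mid 4m$.
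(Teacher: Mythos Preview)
Your overall architecture matches the paper's exactly: split off $S_3$ via $\langle s_1,s_2\rangle$, identify $K=\langle a,b,c\rangle$ inside the base group $M^3$, exhibit $H=\langle b^2,c^2,d^2\rangle\cong(C_m)^3$ as a normal subgroup with $K/H\cong(C_2)^3$, and then for $m$ odd propose $\langle a,b^m,c^m\rangle$ as a complement. Your extra care with the Reidemeister--Schreier step and the explicit $\langle s_1,s_2\rangle\cong S_3$ argument is a genuine improvement on the paper's ``it is not too difficult to see''.

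There is, however, a concrete error in your final step (and the paper's discussion is equally guilty here, just less explicit). You assert that the only possibly non-commuting coordinates in the pairs $\{a,b^m\},\{a,c^m\},\{b^m,c^m\}$ involve a reflection against the rotation power $(r_0r_1)^{2m}$. That is true for the pairs containing $a$, but it is \emph{false} for $\{b^m,c^m\}$: in coordinate~$1$ (i.e.\ at $e_1\in\mathcal A$) one has $b^m\mapsto(r_0,t)$ and $c^m\mapsto(r_1r_0r_1,t)$, two reflections, and their commutator in that coordinate is $\big((r_0r_1)^4,1\big)$. A direct computation then gives
\[
[b^m,c^m]=\big(((r_0r_1)^4,1),(1,1),(1,1),\epsilon\big)=d^2\in H,
\]
which is nontrivial whenever $p\nmid 4$. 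Hence $\langle a,b^m,c^m\rangle$ meets $H$ nontrivially and is \emph{not} a complement to $H$ in $K$; in particular it is not isomorphic to $(C_2)^3$.

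The statement itself survives: when $m$ is odd one has $\gcd(|H|,|K/H|)=\gcd(m^3,8)=1$, so Schur--Zassenhaus guarantees a complement and the extension splits. If you want an explicit complement rather than an existence theorem, you must replace at least one of $b^m,c^m$ by a different involution lifting the corresponding coset; the triple $a,b^m,c^m$ as written does not work.
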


Once we know the structure of the monodromy group of prisms over polygons, we consider  prisms over some $3$-polytopes. Let $p$ be an integer, and $\po$ be a $3$-polytope such that all its vertex figures are isomorphic to $p$-gons (these polytopes are sometimes called  {\em uniform} maps, however, the notation is not standard as {\em uniform} in other contexts means that the polytope is vertex-transitive). Consider the prism over $\po$.  Then, the generators of $\Mon(\Pri(\po))$ are
\begin{eqnarray*}
&s_0=\big( (1,t), (r_0,1), (r_0, 1), (r_0,1), \epsilon \big), & s_1=\big( (1,1), (1,1), (r_1,1), (r_1,1), \sigma_1\big), \\
&s_2=\big( (r_1,1),(1,1), (1,1),  (r_2,1), \sigma_2\big),& s_3=\big( (r_2,1), (r_2,1),(1,1), (1,1),  \sigma_3\big) .
\end{eqnarray*}
Note that $\langle s_1, s_2, s_3 \rangle$ is isomorphic to the monodromy group of the pyramid over a $p$-gon. 
Although this suggests that computing the kernel $K$ and hence knowing the structure of the monodromy group of the prism is easy (as we have already done the work for the pyramid), this is far from true. The reason for this is that now $s_0 \in K$, so finding the generators of $K$ is not easy. It is true, however, that $K$ contains a normal subgroup isomorphic to $(C_{\frac{p}{gcd(3,p)}})^4$ and that the extension of $S_4$ by $K$ splits whenever $p$ is not $0$ modulo $9$ (since the elements we need to use to recover $S_4$ in $\Mon(\Pri(\po))$ are the same as the ones needed in the pyramid). In other words, we have the following proposition.

\begin{prop}
Let $\po$ be a $3$-polytope such that all its vertex-figures are isomorphic to a $p$-gon. Then the monodromy group of $\Pri(\po$), the prism over $\po$, is a split extension of $S_4$ by some normal group whenever $p$ is not congruent to $0$ modulo $9$.
\end{prop}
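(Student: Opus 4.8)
The plan is to realise $\Mon(\Pri(\po))$ as an extension of $S_4$ and then import a complement directly from the already–understood pyramid case. First I would apply Proposition~\ref{prop:Monext} with $\odot=\times$, $\qo_1=\po$ (rank $3$) and $\qo_2=e$ (rank $1$): by (\ref{n}) we get $n=n_1+n_2=4$, so $\Mon(\Pri(\po))$ embeds in $\W=(\Mon(\po)\times C_2)\wr_{\mathcal A}S_4$ with $|{\mathcal A}|=4$, and the projection $\pi\colon\Mon(\Pri(\po))\to S_4$ is onto. Writing $K$ for its kernel, this presents $\Mon(\Pri(\po))$ as an extension of $S_4$ by $K$, and the whole task reduces to producing a subgroup $T\cong S_4$ on which $\pi$ restricts to an isomorphism.

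The key structural observation is that the generators $s_1,s_2,s_3$ of (\ref{genMONOprism}) have trivial $C_2$–component and involve only $r_1,r_2$. I would first check that $\langle r_1,r_2\rangle\le\Mon(\po)$ is isomorphic to $\Mon(p\text{-gon})$: since $r_1,r_2$ fix the $0$-face of every flag they act within the vertex-figures, and the hypothesis that all vertex-figures are $p$-gons forces $r_1^2=r_2^2=(r_1r_2)^p=1$ with $r_1r_2$ of order exactly $p$, whence $\langle r_1,r_2\rangle\cong D_p=\Mon(p\text{-gon})$. Substituting $r_1\mapsto\rho_0$, $r_2\mapsto\rho_1$ (the monodromy generators of a $p$-gon), shifting the labels of $\mathcal A$, and discarding the trivial $C_2$-factor, I would match $s_1,s_2,s_3$ termwise with the pyramid generators $t_0,t_1,t_2$ of (\ref{genMONOpyr}); this identifies $\langle s_1,s_2,s_3\rangle$ with $\Mon(\Pyr(p\text{-gon}))$ inside $D_p\wr_{\mathcal A}S_4$. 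Verifying that the two wreath embeddings genuinely coincide after relabelling—in particular that no extra relations are introduced by the global structure of $\Mon(\po)$—is the step requiring the most care and is the main obstacle.

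Granting that identification, I would quote the pyramid computation (the deferred argument of Section~\ref{sec:MonoPrism}, cf.\ \cite{berman2014monodromy}): when $p\not\equiv0\pmod 9$ the extension of $S_4$ by $(C_m)^4$ with $m=\tfrac{p}{\gcd(3,p)}$ realising $\Mon(\Pyr(p\text{-gon}))$ splits, so there is a complement generated by explicit words in $t_0,t_1,t_2$. Transporting those same words in $s_1,s_2,s_3$ yields a subgroup $T\le\langle s_1,s_2,s_3\rangle\le\Mon(\Pri(\po))$ with $\pi(T)=\langle\sigma_1,\sigma_2,\sigma_3\rangle=S_4$ and $\pi|_T$ injective.

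Finally I would transfer this complement to the full group. The crucial point is that $\pi(s_0)=\epsilon$, i.e.\ $s_0\in K$, so $\pi$ has the same image $S_4$ on $\langle s_1,s_2,s_3\rangle$ as on all of $\Mon(\Pri(\po))$. Since $T\le\langle s_1,s_2,s_3\rangle$ and $\pi|_T$ is injective, $T\cap K=1$, while $\pi(T)=S_4=\pi(\Mon(\Pri(\po)))$; hence $T$ is a complement to $K$ and $\Mon(\Pri(\po))\cong K\rtimes S_4$ whenever $p\not\equiv0\pmod 9$. Along the way $K$ is seen to contain the normal subgroup $(C_m)^4$ inherited from the pyramid kernel, which records the image of the vertical $(r_1r_2)$-powers.
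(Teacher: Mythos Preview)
Your proposal is correct and follows essentially the same route as the paper: identify $\langle s_1,s_2,s_3\rangle$ with $\Mon(\Pyr(p\text{-gon}))$ via the substitution $r_1\mapsto\rho_0$, $r_2\mapsto\rho_1$, import the complement from the pyramid case when $p\not\equiv 0\pmod 9$, and use $s_0\in K$ to conclude it remains a complement in the full group. You supply more detail than the paper does---in particular the verification that $\langle r_1,r_2\rangle\cong D_p$ from the vertex-figure hypothesis, and the explicit argument that $T\cap K=1$ and $\pi(T)=S_4$---whereas the paper simply asserts the identification and that ``the elements we need to use to recover $S_4$ \dots\ are the same as the ones needed in the pyramid.''
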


\subsection{On the monodromy group of topological products with a polygon}

Let $\po$ be an $n$-polytope, and consider $\square_\po:= \po \square \qo$, where $\qo$ is a $p$-gon. Note that $\square_\po$ has rank $n+1$.
The analysis of $\Mon(\square_\po)$ is very similar to that of $\Pri(\po)$.
The two main differences are that $\Mon(\square_\po)$ is now an extension of $S_n$ (as opposed to $S_{n+1}$), and that the generators $s_0$ and $s_n$ of $\Mon(\square_\po)$ are now
\begin{eqnarray*}
s_0= \big( (1,t_0), (r_0,1), (r_0,1), \dots, (r_0,1), \epsilon \big), \\
s_n= \big( (1,t_1), (r_{n-1},1), (r_{n-1},1), \dots, (r_{n-1},1), \epsilon \big),
\end{eqnarray*}
where $t_0, t_1$ are the generators of $\Mon(\qo)$ and $r_0, \dots r_{n-1}$ are the generators of $\Mon(\po)$.
%

Again, computing in general $K$ is not easy in general. 
However, whenever $\po$ is also a $2$-polytope, say a $q$-gon, is rather simple.
In this case, $n=2$, so $\Mon(\po\square\qo)$ is a split extension of $S_2$ by $K$.
Moreover,
\begin{eqnarray*}
s_0=\big( (1,t_0), (r_0,1), \epsilon \big), \ \ 
s_1=\big( (1,1), (1,1), \sigma_1 \big), \ \
s_2=\big( (1, t_1), (r_1,1), \epsilon \big).
\end{eqnarray*}
and
\begin{eqnarray*}
s_1s_0s_1=\big( (r_0,1), (1,t_0), \epsilon \big), \ \ 
s_1s_2s_1=\big( (r_1,1), (1,t_1), \epsilon \big).
\end{eqnarray*}
Hence, the kernel $K$ is generated by $s_0, s_2, s_1s_0s_1$ and $s_1s_2s_1$. 
A simple computation shows then that $\langle s_0, s_1s_2s_1\rangle \cong \langle s_2, s_1s_0s_1\rangle \cong D_{m}$, where $m=[p,q]$ is the least common multiple of $p$ and $q$.
Moreover, it is straightforward to see that these two groups commute implying that $K = (D_m)^2$ and 
$$\Mon(\po\square\qo)=(D_m)^2 \rtimes S_2.$$

In fact, it is not difficult to extend these techniques to show that, if $\qo_i$ is a $p_i$-gon, then
$$\Mon(\qo_1\square \qo_2 \square \dots \square \qo_r) \cong (D_p)^r \rtimes S_r,$$
where $p$ is the least common multiple of $p_1, \dots p_r$.

We note here that this result is not surprising at all, since the monodromy group of a polytope $\po$ is isomorphic to the minimal regular cover of $\po$, whenever such cover is unique (see for example \cite{hartley2012minimal}). It is easy to see that the minimal regular cover of $\qo_1\square \qo_2 \square \dots \square \qo_r$ is the regular polytope $\qo^r$, where $\qo$ is a $p$-gon ($p=lcm[p_1,\dots, p_r]$) and the power is taken over the $\square$-product.

\section*{Conlcuding remarks}

As we have pointed out before, computing the monodromy group of non-regular polytopes is a difficult task. In this paper we showed that by regarding some polytopes as products this task can be simplified. In particular we think that the computations needed to calculate the monodromy groups of prisms and pyramids over polygons are fairly easy, specially if one compares them to those of \cite{berman2014monodromy} and \cite{hartley2012minimal}. For this reason we strongly believe that the techniques used here can be extended in order to compute monodromy groups of other interesting products and think it is an interesting project to pursue. 


\section*{Acknowledgments}

The authors would like to thank Toma\v{z} Pisanski and Ricardo Strausz for suggesting studying certain products on polytopes and Ricardo Strausz and Deborah Oliveros for some discussions on the subject. 
The completion of this work was done while the second author was on sabbatical at the Laboratoire d'Informatique de l'\'Ecole Polytechnique. She thanks LIX and Vicent Pilaud for their hospitality and the program PASPA-DGAPA and to the UNAM, the support for this sabbatical stay. 
We also gratefully acknowledge financial support of the PAPIIT-DGAPA, under grant IN107015 and of CONACyT, under grant.

\end{document}